\newtheorem{theorem}{Theorem}[section]
\newtheorem{corollary}[theorem] {Corollary}
\newtheorem{definition}[theorem]{Definition}
\newtheorem{example}[theorem]{Example}
\newtheorem{lemma} [theorem]{Lemma}
\newtheorem{proposition}[theorem]{Proposition}
\title{This is the title}
\begin{document}
\begin{center}
{\bf{FACTORABLE WEAK OPERATOR-VALUED FRAMES}}\\
K. MAHESH KRISHNA AND P. SAM JOHNSON  \\
Department of Mathematical and Computational Sciences\\ 
National Institute of Technology Karnataka (NITK), Surathkal\\
Mangaluru 575 025, India  \\
Emails: kmaheshak@gmail.com,  sam@nitk.edu.in\\

Date: \today
\end{center}

\hrule
\vspace{0.5cm}
\textbf{Abstract}: Let $\mathcal{H}$ and $\mathcal{H}_0$ be Hilbert spaces and $\{A_n\}_n$ be a sequence of bounded linear operators from $\mathcal{H}$ to $\mathcal{H}_0$. The study frames for Hilbert spaces initiated the study of operators of the form $\sum_{n=1}^{\infty}A_n^*A_n$, where the convergence is in the strong-operator topology, by Kaftal, Larson and Zhang in the paper: Operator-valued frames. \textit{Trans. Amer. Math. Soc.}, 361(12):6349-6385, 2009.  In this paper, we generalize this and  study the  series of the form $\sum_{n=1}^{\infty}\Psi_n^*A_n$, where $\{\Psi _n\}_n$ is a sequence of operators from $\mathcal{H}$ to $\mathcal{H}_0$. Main tool used in the study of $\sum_{n=1}^{\infty}A_n^*A_n$ is the factorization of this series. Since the series $\sum_{n=1}^{\infty}\Psi_n^*A_n$ may not be factored, it demands greater care. Therefore we impose a factorization of $\sum_{n=1}^{\infty}\Psi_n^*A_n$ and derive various results. We characterize them and derive dilation results. We further study the series by taking the indexed set as group as well as group-like unitary system. We also derive stability results.

\textbf{Keywords}:  Operator-valued frame, strong-operator topology, topological group, group-like unitary system, stability.

\textbf{Mathematics Subject Classification (2020)}: 42C15, 47A13.


\section{Introduction}
Harmonic Fourier series deals with the representation of functions in the Hilbert space $\mathcal{L}^2[-\pi, \pi]$ using the sequence $ \{e^{inx}\}_{n} $ whereas non-harmonic Fourier series deals with representation of functions  in the Banach space $\mathcal{L}^p[-\pi, \pi]$ using the sequence $ \{e^{i\lambda_nx}\}_{n}$ \cite{YOUNG}.  It was in 1952,  in the study of non-harmonic Fourier series,    Duffin and Schaeffer introduced frames for Hilbert spaces \cite{DUFFIN}.  For a span of three decades,  frames are not studied further. Work of  Daubechies, Grossmann, and Meyer \cite{MEYER1} gave rebirth to the frames in 1986. Today, frame theory stands on its own feet and the references \cite{CHRISTENSENBOOK, HANLARSON, ALDROUBI,  HEILBOOK}  will give a comprehensive look on frames (for infinite dimensional separable Hilbert spaces). Formal definition of frames for Hilbert spaces reads as follows. 
\begin{definition}\cite{DUFFIN}\label{OLE}
	A collection $ \{\tau_n\}_{n}$ in  a separable Hilbert space $ \mathcal{H}$ is said to be a frame for $\mathcal{H}$ if there exist $ a, b >0$ such that
	\begin{equation}\label{SEQUENTIALEQUATION1}
		a\|h\|^2\leq\sum_{n=1}^\infty|\langle h, \tau_n \rangle|^2 \leq b\|h\|^2  ,\quad \forall h \in \mathcal{H}.
	\end{equation}
	Constants $ a$ and $ b$ are called as lower and upper frame bounds, respectively. If $a=1=b$, then the frame is   called  as Parseval frame.
\end{definition}
Historically, many generalizations of frames for Hilbert spaces are proposed such as frames for subspaces \cite{CASAZZASUBSPACE}, fusion frames \cite{CASAZZAFUSION}, outer frames \cite{OUTER}, oblique frames \cite{CHRISTENSENOBLIQUE}, pseudo frames \cite{LIPSEUDO}, quasi-projectors \cite{FORNASIERQUASI}. It was in 2006, Sun gave the definition of G-frame which unified all these notions of frames for Hilbert spaces \cite{SUN1}. We use following notations in this paper. Letter $ \mathcal{H}$ always denotes a Hilbert space, so is any of its `integer' subscripts. Identity operator on $ \mathcal{H}$ is denoted by $ I_\mathcal{H}$. Banach space of all bounded linear operators from $ \mathcal{H}$ to $ \mathcal{H}_0 $ is denoted by $ \mathcal{B}(\mathcal{H}, \mathcal{H}_0)$. We write $ \mathcal{B}(\mathcal{H}, \mathcal{H})$ as  $ \mathcal{B}(\mathcal{H})$.
\begin{definition}\cite{SUN1}\label{SUNDEF}
	A collection  $ \{A_n\}_{n} $  in $ \mathcal{B}(\mathcal{H}, \mathcal{H}_0)$ is said to be a G-frame in  $ \mathcal{B}(\mathcal{H}, \mathcal{H}_0)$ if there exist $ a, b >0$ such that 
	\begin{align*}
	a\|h\|^2\leq\sum_{n=1}^\infty\|A_nh\|^2 \leq b\|h\|^2  ,\quad \forall h \in \mathcal{H}.	
	\end{align*}
	\end{definition}
Through a decade long research (see the introduction in \cite{KAFTAL}),  Kaftal, Larson, and Zhang defined the notion of operator-valued frames (OVFs) which is equivalent to the notion of G-frames. Basic idea for the notion of OVF is the following.   Definition \ref{OLE} can be written in an equivalent form as 
\begin{equation}\label{SEQUENTIALEQUATION2}
\text{the map}~ S_\tau: \mathcal{H} \ni h \mapsto \sum_{n=1}^\infty\langle h, \tau_n\rangle \tau_n \in \mathcal{H} ~\text{is well-defined bounded positive invertible operator}.
\end{equation}
If we now define $ A_n : \mathcal{H} \ni  h \mapsto \langle h, x_n\rangle \in \mathbb{K} $, for each $ n \in \mathbb{N}$, then  one more way for Statement (\ref{SEQUENTIALEQUATION2}) is 
\begin{equation}\label{SEQUENTIALEQUATION3}
\sum_{n=1}^\infty A_n^*A_n ~\text{converges in the strong-operator topology  on } \mathcal{B}(\mathcal{H}) \text{ to a bounded positive invertible operator.}
\end{equation}
Now Statement (\ref{SEQUENTIALEQUATION3}) leads to 

\begin{definition}\cite{KAFTAL}\label{KAFTAL}
	A collection  $ \{A_n\}_{n} $  in $ \mathcal{B}(\mathcal{H}, \mathcal{H}_0)$ is said to be an OVF in $ \mathcal{B}(\mathcal{H}, \mathcal{H}_0)$ if the series 
	\begin{align*}
\text{(frame operator)}\quad 	S_A\coloneqq \sum_{n=1}^\infty A_n^*A_n
	\end{align*}
	 converges in the strong-operator topology on $ \mathcal{B}(\mathcal{H})$ to a  bounded invertible operator.
\end{definition}
The fundamental tool used in the study of OVF is the factorization of frame operator $S_A$. This and other important properties of OVFs are stated in the following theorem. To state the theorem we need a particular collection of operators described as follows.  Following \cite{KAFTAL}, given $n \in \mathbb{N}$, we define 
\begin{align*}
L_n : \mathcal{H}_0 \ni h \mapsto L_nh\coloneqq e_n\otimes h \in  \ell^2(\mathbb{N}) \otimes \mathcal{H}_0,
\end{align*}  where $\{e_n\}_{n} $ is  the standard orthonormal basis for $\ell^2(\mathbb{N})$. It then follows that $L_n$'s are isometries from  $\mathcal{H}_0 $ to $ \ell^2(\mathbb{N}) \otimes \mathcal{H}_0$, and  for
$  n,m \in \mathbb{N}$ we have  
\begin{align}\label{LEQUATION}
L_n^*L_m =
\left\{
\begin{array}{ll}
I_{\mathcal{H}_0 } & \mbox{if } n=m \\
0 & \mbox{if } n\neq m
\end{array}
\right.
~\text{and} \quad 
\sum\limits_{n=1}^\infty L_nL_n^*=I_{\ell^2(\mathbb{N})}\otimes I_{\mathcal{H}_0}
\end{align}
where  the convergence is in the strong-operator topology. We also have 
$L_m^*(\{a_n\}_{n}\otimes y) =a_my, \forall  \{a_n\}_{n} \in \ell^2(\mathbb{N}), \forall y \in \mathcal{H}_0$, for each $ m $ in $ \mathbb{N}.$ 
\begin{theorem}\cite{KAFTAL}
	Let  $ \{A_n\}_{n} $  be an OVF in $ \mathcal{B}(\mathcal{H}, \mathcal{H}_0)$. Then
\begin{enumerate}[\upshape(i)]
	\item The analysis operator 
	\begin{align*}
 \theta_A:\mathcal{H} \ni h \mapsto   \theta_A h\coloneqq\sum_{n=1}^\infty L_nA_n h \in \ell^2(\mathbb{N}) \otimes \mathcal{H}_0
\end{align*}
is a well-defined bounded  linear injective operator.
\item The synthesis operator 
\begin{align*}
\theta_A^*:\ell^2(\mathbb{N})\otimes \mathcal{H}_0 \ni z\mapsto\sum\limits_{n=1}^\infty A_n^*L_n^*z \in \mathcal{H} 
\end{align*}
is a well-defined bounded  linear surjective operator.
\item Frame operator 
	factors  as $S_A=\theta_A^*\theta_A.$
\item $ P_A \coloneqq \theta_A S_A^{-1} \theta_A^*:\ell^2(\mathbb{N})\otimes \mathcal{H}_0 \to \ell^2(\mathbb{N})\otimes \mathcal{H}_0$ is an orthogonal  projection onto $ \theta_A(\mathcal{H})$.
\end{enumerate}	
\end{theorem}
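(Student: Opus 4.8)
The plan is to prove the four assertions in the order (i), (iii), (ii), (iv), since the factorization in (iii) is the identity from which surjectivity in (ii) and the projection property in (iv) follow with little extra work. Two standing facts will be used repeatedly: $S_A$ is positive, being the strong-operator limit of the positive partial sums $\sum_{n=1}^{N}A_n^*A_n$, and it is bounded below, $\langle S_A h,h\rangle\ge c\|h\|^2$ for some $c>0$, which is precisely its invertibility. For (i), I would first show $\theta_A h$ is well defined: since $\{e_n\}_n$ is orthonormal, the partial sums obey $\|\sum_{n=M}^{N}L_nA_nh\|^2=\sum_{n=M}^{N}\|A_nh\|^2$, and as $\sum_n\|A_nh\|^2=\langle S_Ah,h\rangle<\infty$ they are Cauchy in $\ell^2(\mathbb{N})\otimes\mathcal{H}_0$, hence convergent. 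Letting $N\to\infty$ gives the key identity $\|\theta_Ah\|^2=\langle S_Ah,h\rangle$, from which boundedness ($\|\theta_A\|\le\|S_A\|^{1/2}$) and injectivity (if $\theta_Ah=0$ then $\langle S_Ah,h\rangle=0$, so $h=0$ by the lower bound) are immediate; linearity is clear.

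For (iii), I would apply the prospective synthesis operator to $\theta_Ah=\sum_m L_mA_mh$ and use $L_n^*L_m=\delta_{nm}I_{\mathcal{H}_0}$ from \eqref{LEQUATION} to collapse the resulting double sum to $\sum_n A_n^*A_nh=S_Ah$; the interchange of the two series is justified by the boundedness from (i) together with the orthogonality relations. A cleaner alternative is to observe that $\langle\theta_A^*\theta_Ah,h\rangle=\|\theta_Ah\|^2=\langle S_Ah,h\rangle$ for every $h$ and then conclude $\theta_A^*\theta_A=S_A$ by polarization, both operators being self-adjoint.

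For (ii), the task is to confirm that the displayed series actually computes the Hilbert-space adjoint of $\theta_A$. Writing each $z\in\ell^2(\mathbb{N})\otimes\mathcal{H}_0$ through its components $L_n^*z\in\mathcal{H}_0$ (so that $\sum_n\|L_n^*z\|^2=\|z\|^2$ by the second relation in \eqref{LEQUATION}), Cauchy--Schwarz gives the tail bound $\|\sum_{n=M}^{N}A_n^*L_n^*z\|\le\|S_A\|^{1/2}(\sum_{n=M}^{N}\|L_n^*z\|^2)^{1/2}\to0$, so the series converges and defines a bounded operator. Moving $L_n$ across the inner product yields $\langle\theta_Ah,z\rangle=\langle h,\sum_n A_n^*L_n^*z\rangle$, identifying this operator as $\theta_A^*$. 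Surjectivity is then a consequence of (iii): for any $h\in\mathcal{H}$ one has $h=S_A^{-1}S_Ah=\theta_A^*(\theta_A S_A^{-1}h)$, so $h$ lies in the range of $\theta_A^*$.

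Finally, for (iv), self-adjointness of $P_A$ is immediate from $(S_A^{-1})^*=S_A^{-1}$, and idempotency follows from the factorization, $P_A^2=\theta_A S_A^{-1}(\theta_A^*\theta_A)S_A^{-1}\theta_A^*=\theta_A S_A^{-1}S_A S_A^{-1}\theta_A^*=P_A$. For the range, $P_Az=\theta_A(S_A^{-1}\theta_A^*z)$ gives $\operatorname{ran}P_A\subseteq\theta_A(\mathcal{H})$, while $P_A(\theta_Ah)=\theta_A S_A^{-1}S_Ah=\theta_Ah$ gives the reverse inclusion; as the range of a projection, $\theta_A(\mathcal{H})$ is closed, consistent with $\theta_A$ being bounded below. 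I expect the only genuinely delicate points to be the rigorous convergence and interchange arguments for the infinite series in (ii) and (iii); every other conclusion rests on the single norm identity $\|\theta_Ah\|^2=\langle S_Ah,h\rangle$ and the relations \eqref{LEQUATION}.
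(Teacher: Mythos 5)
Your proof is correct. The paper does not supply its own argument for this theorem --- it is quoted from Kaftal--Larson--Zhang \cite{KAFTAL} without proof --- so there is nothing internal to compare against, but your reasoning is the standard and complete one: the norm identity $\|\theta_Ah\|^2=\langle S_Ah,h\rangle$ (valid because $S_A$, as an SOT-limit of positive partial sums, is positive and hence bounded below by invertibility) yields (i); polarization or the relations in Equation~(\ref{LEQUATION}) yield (iii); the Cauchy--Schwarz tail estimate together with $\sum_n\|L_n^*z\|^2=\|z\|^2$ identifies the series in (ii) with the adjoint, and surjectivity follows from $h=\theta_A^*(\theta_AS_A^{-1}h)$; and (iv) is the routine verification of self-adjointness, idempotency, and range.
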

This paper is organized as follows. In Section \ref{MK} we define the notion of weak OVF which is about the invertibility of the series $\sum_{n=1}^{\infty}\Psi_n^*A_n$. We then define  the notion of dual frames. To study further properties of weak OVFs, we define factorable weak OVFs which  allows to decompose the series as product of two bounded linear operators. This notion also allows to move frequently between Hilbert spaces $\mathcal{H}$ and $\ell^2(\mathbb{N})\otimes \mathcal{H}_0$. We then derive dilation result and various characterizations for weak OVFs. We also derive characterizations for  duals of weak OVFs. Orthogonality of weak OVFs are introduced and an interpolation result is derived. Section \ref{SIMILARITYCOMPOSITIONANDTENSORPRODUCT} contains the notion of equivalence of weak OVFs. Equivalence depends upon natural numbers. We  characterize them using operators. Section \ref{FRAMESANDDISCRETEGROUPREPRESENTATIONS} studies factorable weak OVFs indexed by groups. Advantage of indexing a frame with group is that  we can generate factorable weak OVFs by starting with two fixed operators. Theorem \ref{gc1} characterizes unitary representations of discrete groups which generate factorable Parseval weak OVFs. In Section \ref{FRAMESANDGROUP-LIKEUNITARYSYSTEMS} we study OVFs indexed by group-like unitary systems.  Theorem  \ref{CHARACTERIZATIONGROUPLIKE} characterizes  unitary representations of group-like unitary system   which generate Parseval weak OVFs. Section \ref{PERTURBATIONS} shows that factorable weak OVFs are  stable under perturbations.

\section{Factorable weak operator-valued frames}\label{MK}

\begin{definition}
	Let  $ \{A_n\}_{n} $ and  $ \{\Psi_n\}_{n} $ be collections in $ \mathcal{B}(\mathcal{H}, \mathcal{H}_0)$. The pair $( \{A_n\}_{n},  \{\Psi_n\}_{n} )$ is said to be a weak \textit{operator-valued frame}  (weak OVF) in $ \mathcal{B}(\mathcal{H}, \mathcal{H}_0) $   if  the series 
	\begin{align*}
	\text{(frame operator)}\quad S_{A, \Psi} \coloneqq  \sum_{n=1}^\infty \Psi_n^*A_n
	\end{align*}
	  converges in the strong-operator topology on $ \mathcal{B}(\mathcal{H})$ to a  bounded  invertible operator. If $	S_{A, \Psi}=I_\mathcal{H}$, then the frame is called as a Parseval frame. 
\end{definition}
 Unlike in the case of OVFs, note that the operator $S_{A, \Psi}$ need not be positive.  Since $S_{A, \Psi}$ is invertible, there are $a,b>0$ such that 
\begin{align*}
a\|h\|\leq \|S_{A, \Psi}h\|\leq b \|h\|, \quad \forall h \in \mathcal{H}.
\end{align*}
We call such $a,b$ as lower and upper frame bounds, respectively. Supremum of the set of all lower frame bounds is called as optimal lower frame bound and infimum of the set of all upper frame bounds is called as optimal upper frame bound. We easily get that 
\begin{align*}
\text{ optimal lower frame bound }=\|S_{A,\Psi}^{-1}\|^{-1} \quad \text{and} \quad  \text{ optimal upper frame bound } = \|S_{A,\Psi}\|.
\end{align*}
Given a frame $ \{\tau_n\}_{n}$ for  a separable Hilbert space $ \mathcal{H}$, it is known in frame theory that $ \{S_\tau^{-1}\tau_n\}_{n}$ is again a frame for $ \mathcal{H}$. This frame is known as dual frame. We now define such a notion for weak OVFs.
\begin{definition}
	A weak  OVF  $ (\{B_n\}_{n} , \{\Phi_n\}_{n} )$  in $\mathcal{B}(\mathcal{H}, \mathcal{H}_0)$ is said to be dual for a weak  OVF $  ( \{A_n\}_{n},  \{\Psi_n\}_{n} )$ in $\mathcal{B}(\mathcal{H}, \mathcal{H}_0)$  if  
	\begin{align*}
	\sum_{n=1}^\infty \Psi_n^*B_n= \sum_{n=1}^\infty\Phi^*_nA_n=I_{\mathcal{H}}.
	\end{align*}
\end{definition}
	Note that  dual always exists for a given weak OVF.  In fact, a direct calculation shows that, each of 
	\begin{align*}
	  ( \{\widetilde{A}_n\coloneqq A_nS_{A,\Psi}^{-1}\}_{n},\{\widetilde{\Psi}_n\coloneqq\Psi_n(S_{A,\Psi}^{-1})^*\}_{n})
	\end{align*}
	is a weak OVF and is a dual for $  ( \{A_n\}_{n},  \{\Psi_n\}_{n} )$.  This weak  OVF  is as called the canonical dual for $  ( \{A_n\}_{n},  \{\Psi_n\}_{n} )$. Canonical duals have two nice properties. Following two results establish them.

\begin{proposition}
	Let $( \{A_n\}_{n},  \{\Psi_n\}_{n} )$ be a weak  OVF in $ \mathcal{B}(\mathcal{H}, \mathcal{H}_0).$  If $ h \in \mathcal{H}$ has representation  $ h=\sum_{n=1}^\infty A_n^*y_n= \sum_{n=1}^\infty\Psi_n^*z_n, $ for some sequences $ \{y_n\}_{n},\{z_n\}_{n}$ in $ \mathcal{H}_0$, then 
	$$ \sum_{n=1}^\infty\langle y_n,z_n\rangle =\sum_{n=1}^\infty\langle \widetilde{\Psi}_nh,\widetilde{A}_nh\rangle+\sum_{n=1}^\infty\langle y_n-\widetilde{\Psi}_nh,z_n-\widetilde{A}_nh\rangle. $$
\end{proposition}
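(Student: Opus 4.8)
The plan is to reduce the asserted identity to three inner-product evaluations, each of which I expect to collapse to the single quantity $\langle h, S_{A,\Psi}^{-1}h\rangle$ after pushing adjoints through the infinite sums. For bookkeeping I would abbreviate the canonical coefficients by $u_n \coloneqq \widetilde{\Psi}_n h = \Psi_n (S_{A,\Psi}^{-1})^* h$ and $v_n \coloneqq \widetilde{A}_n h = A_n S_{A,\Psi}^{-1} h$, so that what must be shown reads $\sum_{n=1}^\infty\langle y_n, z_n\rangle = \sum_{n=1}^\infty\langle u_n, v_n\rangle + \sum_{n=1}^\infty\langle y_n - u_n, z_n - v_n\rangle$.

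First I would record the three evaluations. From $\langle y_n, v_n\rangle = \langle y_n, A_n S_{A,\Psi}^{-1}h\rangle = \langle A_n^* y_n, S_{A,\Psi}^{-1}h\rangle$, summing and moving the continuous inner product past the series gives $\sum_{n=1}^\infty\langle y_n, v_n\rangle = \langle \sum_{n=1}^\infty A_n^* y_n,\, S_{A,\Psi}^{-1}h\rangle = \langle h, S_{A,\Psi}^{-1}h\rangle$, where the last step is the hypothesis $h = \sum_{n=1}^\infty A_n^* y_n$. Symmetrically, from $\langle u_n, z_n\rangle = \langle (S_{A,\Psi}^{-1})^* h, \Psi_n^* z_n\rangle$ together with $h = \sum_{n=1}^\infty \Psi_n^* z_n$ and the adjoint relation $\langle (S_{A,\Psi}^{-1})^* h, h\rangle = \langle h, S_{A,\Psi}^{-1}h\rangle$, I get $\sum_{n=1}^\infty\langle u_n, z_n\rangle = \langle h, S_{A,\Psi}^{-1}h\rangle$. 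Finally $\langle u_n, v_n\rangle = \langle (S_{A,\Psi}^{-1})^* h, \Psi_n^* A_n S_{A,\Psi}^{-1}h\rangle$, and summing invokes the defining factorization $\sum_{n=1}^\infty \Psi_n^* A_n = S_{A,\Psi}$ to yield $\sum_{n=1}^\infty\langle u_n, v_n\rangle = \langle (S_{A,\Psi}^{-1})^* h,\, S_{A,\Psi}S_{A,\Psi}^{-1}h\rangle = \langle h, S_{A,\Psi}^{-1}h\rangle$. Thus all three cross-sums share the common value $\langle h, S_{A,\Psi}^{-1}h\rangle$.

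With these in hand the identity is purely formal. Expanding $\sum_{n=1}^\infty\langle y_n - u_n, z_n - v_n\rangle = \sum_{n=1}^\infty\langle y_n, z_n\rangle - \sum_{n=1}^\infty\langle y_n, v_n\rangle - \sum_{n=1}^\infty\langle u_n, z_n\rangle + \sum_{n=1}^\infty\langle u_n, v_n\rangle$ and substituting the common value for the last three sums leaves $\sum_{n=1}^\infty\langle y_n, z_n\rangle - \langle h, S_{A,\Psi}^{-1}h\rangle$; adding back $\sum_{n=1}^\infty\langle u_n, v_n\rangle = \langle h, S_{A,\Psi}^{-1}h\rangle$ recovers $\sum_{n=1}^\infty\langle y_n, z_n\rangle$, as required.

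The only genuine care needed is analytic rather than algebraic: justifying, in each of the three evaluations, the interchange of the infinite sum with the inner product and with the bounded operator $S_{A,\Psi}^{-1}$. Since $S_{A,\Psi}^{-1}$ is bounded and the inner product is continuous, each interchange is legitimate once the relevant series converge; the series $\sum_{n=1}^\infty A_n^* y_n$ and $\sum_{n=1}^\infty \Psi_n^* z_n$ converge by hypothesis, while $\sum_{n=1}^\infty \Psi_n^* A_n$ converges in the strong operator topology by the definition of a weak OVF. I therefore expect this convergence/continuity step to be the main (though routine) point to verify, with no obstacle beyond careful tracking of the adjoints.
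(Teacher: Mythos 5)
Your proof is correct and follows essentially the same route as the paper: expand $\sum_n\langle y_n-\widetilde{\Psi}_nh,\,z_n-\widetilde{A}_nh\rangle$ and show that each of the three cross-sums $\sum_n\langle y_n,\widetilde{A}_nh\rangle$, $\sum_n\langle \widetilde{\Psi}_nh,z_n\rangle$, $\sum_n\langle \widetilde{\Psi}_nh,\widetilde{A}_nh\rangle$ equals $\langle h,S_{A,\Psi}^{-1}h\rangle$ using the two representations of $h$ and the factorization $\sum_n\Psi_n^*A_n=S_{A,\Psi}$. Your presentation is, if anything, slightly cleaner in isolating the common value and in flagging the sum--inner-product interchange explicitly.
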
  
\begin{proof}
We start from the right side and see 	
	\begin{align*}
&\sum\limits_{n=1}^\infty\langle \widetilde{\Psi}_nh,\widetilde{A}_nh\rangle+\sum\limits_{n=1}^\infty\langle y_n, z_n\rangle -\sum\limits_{n=1}^\infty\langle y_n, \widetilde{A}_nh\rangle-\sum\limits_{n=1}^\infty\langle \widetilde{\Psi}_nh, z_n\rangle +\sum\limits_{n=1}^\infty\langle \widetilde{\Psi}_nh,\widetilde{A}_nh\rangle\\
&=2\sum\limits_{n=1}^\infty\langle \widetilde{\Psi}_nh,\widetilde{A}_nh\rangle+ \sum\limits_{n=1}^\infty\langle y_n, z_n\rangle-\sum\limits_{n=1}^\infty\langle y_n,A_nS_{A,\Psi}^{-1}h\rangle-\sum\limits_{n=1}^\infty\langle \Psi_n(S_{A,\Psi}^{-1})^*h, z_n\rangle\\
&= 2\left\langle\sum\limits_{n=1}^\infty(S_{A,\Psi}^{-1})^*A_n^*\Psi_n(S_{A,\Psi}^{-1})^*h, h \right\rangle+ \sum\limits_{n=1}^\infty\langle y_n, z_n\rangle-\left\langle \sum\limits_{n=1}^\infty A_n^*y_n,S_{A,\Psi}^{-1}h\right \rangle -\left\langle (S_{A,\Psi}^{-1})^*h , \sum\limits_{n=1}^\infty\Psi_n^*z_n\right \rangle\\
&=2 \langle (S_{A,\Psi}^{-1})^*h,h \rangle + \sum\limits_{n=1}^\infty\langle y_n, z_n\rangle -\langle h, S_{A,\Psi}^{-1}h\rangle-\langle (S_{A,\Psi}^{-1})^*h, h\rangle
\end{align*}
which is the left side.	
\end{proof}
\begin{theorem}\label{CANONICALDUALFRAMEPROPERTYOPERATORVERSIONWEAK}
	Let $( \{A_n\}_{n},  \{\Psi_n\}_{n} )$ be a weak  OVF with frame bounds $ a$ and $ b.$ Then
	\begin{enumerate}[\upshape(i)]
		\item The canonical dual weak  OVF for the canonical dual weak  OVF  for $( \{A_n\}_{n},  \{\Psi_n\}_{n} )$ is itself.
		\item$ \frac{1}{b}, \frac{1}{a}$ are frame bounds for the canonical dual of $( \{A_n\}_{n},  \{\Psi_n\}_{n} )$.
		\item If $ a, b $ are optimal frame bounds for $( \{A_n\}_{n},  \{\Psi_n\}_{n} )$, then $ \frac{1}{b}, \frac{1}{a}$ are optimal  frame bounds for its canonical dual.
	\end{enumerate} 
\end{theorem}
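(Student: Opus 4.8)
The plan is to reduce all three parts to a single computation: that the frame operator of the canonical dual equals $S_{A,\Psi}^{-1}$.

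First I would compute $S_{\widetilde A,\widetilde\Psi}=\sum_{n=1}^\infty\widetilde\Psi_n^*\widetilde A_n$. Since $\widetilde A_n=A_nS_{A,\Psi}^{-1}$ and $\widetilde\Psi_n^*=S_{A,\Psi}^{-1}\Psi_n^*$, for each fixed $h\in\mathcal H$ I would pull the bounded operator $S_{A,\Psi}^{-1}$ through the strong-operator-topology limit (using that left and right multiplication by a fixed bounded operator is continuous in that topology) to get
\begin{align*}
S_{\widetilde A,\widetilde\Psi}h=S_{A,\Psi}^{-1}\Big(\sum_{n=1}^\infty\Psi_n^*A_n\Big)S_{A,\Psi}^{-1}h=S_{A,\Psi}^{-1}S_{A,\Psi}S_{A,\Psi}^{-1}h=S_{A,\Psi}^{-1}h.
\end{align*}
Hence $S_{\widetilde A,\widetilde\Psi}=S_{A,\Psi}^{-1}$, which is again bounded and invertible, confirming the canonical dual is genuinely a weak OVF.

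For (i) I would apply the canonical-dual recipe a second time to $(\{\widetilde A_n\}_n,\{\widetilde\Psi_n\}_n)$. Because its frame operator is $S_{A,\Psi}^{-1}$, the inverse entering the second dual is $(S_{A,\Psi}^{-1})^{-1}=S_{A,\Psi}$. Substituting, $\widetilde A_n(S_{A,\Psi}^{-1})^{-1}=A_nS_{A,\Psi}^{-1}S_{A,\Psi}=A_n$, and $\widetilde\Psi_n((S_{A,\Psi}^{-1})^{-1})^*=\Psi_n(S_{A,\Psi}^{-1})^*S_{A,\Psi}^*=\Psi_n(S_{A,\Psi}S_{A,\Psi}^{-1})^*=\Psi_n$, recovering the original pair.

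For (ii) and (iii) I would invoke the optimal-bound formulas already recorded before the statement: the optimal lower bound of a weak OVF is $\|S^{-1}\|^{-1}$ and the optimal upper bound is $\|S\|$. Applied to the dual, whose frame operator is $S_{A,\Psi}^{-1}$, these become $\|S_{A,\Psi}\|^{-1}$ and $\|S_{A,\Psi}^{-1}\|$ respectively. For (ii), any frame bounds $a,b$ of the original satisfy $a\le\|S_{A,\Psi}^{-1}\|^{-1}$ and $b\ge\|S_{A,\Psi}\|$; rearranging gives $1/b\le\|S_{A,\Psi}\|^{-1}$ and $1/a\ge\|S_{A,\Psi}^{-1}\|$, which say exactly that $1/b$ and $1/a$ are valid lower and upper bounds for the dual. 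For (iii), optimality means $a=\|S_{A,\Psi}^{-1}\|^{-1}$ and $b=\|S_{A,\Psi}\|$, so the dual's optimal bounds are precisely $1/b$ and $1/a$. The only point needing genuine care is the extraction of $S_{A,\Psi}^{-1}$ from the strong-operator limit in the first step---this is where the fact that $S_{A,\Psi}$ need not be self-adjoint might raise a worry---but it is harmless, since multiplying a strongly convergent net by a fixed bounded operator on either side preserves convergence to the product of the limits.
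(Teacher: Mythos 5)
Your proposal is correct and follows essentially the same route as the paper: both hinge on the single computation $S_{\widetilde A,\widetilde\Psi}=S_{A,\Psi}^{-1}\bigl(\sum_{n=1}^\infty\Psi_n^*A_n\bigr)S_{A,\Psi}^{-1}=S_{A,\Psi}^{-1}$, after which (i) is the dual recipe applied twice and (ii)--(iii) are standard facts about invertible operators. The only difference is cosmetic: the paper dismisses (ii) and (iii) with the remark that they ``follow from the property of invertible operators on Banach spaces,'' whereas you spell them out via the optimal-bound formulas $\|S^{-1}\|^{-1}$ and $\|S\|$, which is a legitimate (and slightly more explicit) way to fill in the same gap.
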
 
\begin{proof}
Since (ii) and (iii)  follow from the property of invertible operators on Banach spaces, we have to argue for (i)$(\{A_nS_{A,\Psi}^{-1}\}_{n}, \{\Psi_n(S_{A,\Psi}^{-1})^*\}_{n} )$ is 
	$$ \sum\limits_{n=1}^\infty(\Psi_n(S_{A,\Psi}^{-1}))^* (A_nS_{A,\Psi}^{-1}) =S_{A,\Psi}^{-1}\left(\sum\limits_{n=1}^\infty\Psi_n ^*A_n\right)S_{A,\Psi}^{-1} =S_{A,\Psi}^{-1}S_{A,\Psi}S_{A,\Psi}^{-1}= S_{A,\Psi}^{-1}.$$
	Therefore, its canonical dual is $(\{(A_nS_{A,\Psi}^{-1})S_{A,\Psi}\}_{n} , \{(\Psi_nS_{A,\Psi}^{-1})S_{A,\Psi}\}_{n})$ which is the original frame.
\end{proof}
For the further study of weak OVFs, we impose some conditions so that the frame operator splits.  
\begin{definition}
	A weak OVF $( \{A_n\}_{n},  \{\Psi_n\}_{n} )$ is said to be factorable if both the maps 
	\begin{align*}
\text{ (analysis operator) }	\quad  \theta_A:\mathcal{H} \ni h \mapsto   \theta_A h\coloneqq\sum_{n=1}^\infty L_nA_n h \in \ell^2(\mathbb{N}) \otimes \mathcal{H}_0\\
\text{ (analysis operator) }	\quad	 \theta_\Psi:\mathcal{H}\ni h \mapsto  \theta_\Psi h\coloneqq \sum_{n=1}^\infty L_n\Psi_n h \in \ell^2(\mathbb{N}) \otimes \mathcal{H}_0
	\end{align*}
	are well-defined bounded linear operators. 
\end{definition}
We next give an example which shows that a weak OVF need not be factorable.

\begin{example}
	On $ \mathbb{C},$ define $ A_nx\coloneqq\frac{x}{\sqrt{n}},  \forall x \in \mathbb{C}, \forall n \in \mathbb{N}$, and $\Psi_1x\coloneqq x, \Psi_nx\coloneqq0, \forall x \in \mathbb{C}, \forall n \in \mathbb{N}\setminus\{1\} $. Then $ \sum_{n=1}^\infty\Psi_n^*A_nx$ converges to a positive invertible operator but  $ \sum_{n=1}^\infty L_nA_nx$ doesn't. In fact, using Equation (\ref{LEQUATION}),
	\begin{align*}
	\left\| \sum_{n=1}^mL_nA_n1\right\|^2=\sum_{n=1}^m\|A_n1\|^2=\sum_{n=1}^m\frac{1}{n} \to \infty \quad \text{ as } \quad m \to \infty.
	\end{align*}
\end{example}
 Equation (\ref{LEQUATION}) gives the following theorem  easily.
\begin{theorem}
	Let  $ \{A_n\}_{n} $  be a weak  OVF in $ \mathcal{B}(\mathcal{H}, \mathcal{H}_0)$. Then
	\begin{enumerate}[\upshape(i)]
		\item The analysis operator 
		\begin{align*}
		\theta_A:\mathcal{H} \ni h \mapsto   \theta_A h\coloneqq\sum_{n=1}^\infty L_nA_n h \in \ell^2(\mathbb{N}) \otimes \mathcal{H}_0
		\end{align*}
		is a well-defined bounded  linear injective operator.
		\item The synthesis operator 
		\begin{align*}
		\theta_\Psi^*:\ell^2(\mathbb{N})\otimes \mathcal{H}_0 \ni z\mapsto\sum\limits_{n=1}^\infty \Psi_n^*L_n^*z \in \mathcal{H} 
		\end{align*}
		is a well-defined bounded  linear surjective operator.
		\item Frame operator 
		factors  as $S_{A,\Psi}=\theta_\Psi^*\theta_A.$
		\item $ P_{A,\Psi} \coloneqq \theta_A S_{A,\Psi}^{-1} \theta_\Psi^*:\ell^2(\mathbb{N})\otimes \mathcal{H}_0 \to \ell^2(\mathbb{N})\otimes \mathcal{H}_0$ is an  idempotent onto $ \theta_A(\mathcal{H})$.
	\end{enumerate}	
\end{theorem}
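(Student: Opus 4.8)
The plan is to derive everything from the single factorization identity $S_{A,\Psi} = \theta_\Psi^* \theta_A$, which in turn rests on the orthogonality relations in Equation~(\ref{LEQUATION}). Since the pair is assumed factorable, both $\theta_A$ and $\theta_\Psi$ are already well-defined bounded linear operators by definition; this immediately disposes of the well-definedness and boundedness claims in (i) and, after passing to the adjoint, in (ii). First I would record the formula for the adjoint: for $z \in \ell^2(\mathbb{N}) \otimes \mathcal{H}_0$ and $h \in \mathcal{H}$, expanding $\langle \theta_\Psi h, z\rangle$ and moving each $L_n$ across the inner product yields $\theta_\Psi^* z = \sum_n \Psi_n^* L_n^* z$, so the synthesis operator named in (ii) is genuinely the Hilbert-space adjoint of the bounded operator $\theta_\Psi$ and is therefore itself bounded.

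Next I would establish (iii). Using continuity of the bounded operator $\theta_\Psi^*$ to pull it inside the convergent series defining $\theta_A h$, I obtain $\theta_\Psi^* \theta_A h = \sum_n \theta_\Psi^* L_n A_n h$. The relation $L_m^* L_n = \delta_{mn} I_{\mathcal{H}_0}$ from Equation~(\ref{LEQUATION}) collapses $\theta_\Psi^* L_n y = \sum_m \Psi_m^* L_m^* L_n y$ to the single surviving term $\Psi_n^* y$, leaving $\theta_\Psi^* \theta_A h = \sum_n \Psi_n^* A_n h = S_{A,\Psi} h$, which is the asserted factorization.

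With (iii) in hand, parts (i) and (ii) become formal consequences of the invertibility of $S_{A,\Psi}$. For injectivity in (i): if $\theta_A h = 0$ then $S_{A,\Psi} h = \theta_\Psi^*\theta_A h = 0$, and invertibility forces $h = 0$. For surjectivity in (ii): the range of $\theta_\Psi^*$ contains the range of $\theta_\Psi^* \theta_A = S_{A,\Psi}$, which is all of $\mathcal{H}$. Finally, for (iv), idempotency follows by inserting the factorization into $P_{A,\Psi}^2 = \theta_A S_{A,\Psi}^{-1}(\theta_\Psi^*\theta_A) S_{A,\Psi}^{-1}\theta_\Psi^* = \theta_A S_{A,\Psi}^{-1}\theta_\Psi^* = P_{A,\Psi}$, while the range identity follows because $P_{A,\Psi}$ visibly maps into $\theta_A(\mathcal{H})$ and $P_{A,\Psi}\theta_A h = \theta_A S_{A,\Psi}^{-1} S_{A,\Psi} h = \theta_A h$ shows it restricts to the identity there.

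The only genuinely delicate step is the interchange of $\theta_\Psi^*$ with the infinite sum in (iii); this is legitimate purely because $\theta_\Psi^*$ is bounded, hence continuous, and the series for $\theta_A h$ converges in the norm of $\ell^2(\mathbb{N}) \otimes \mathcal{H}_0$ — it is exactly here that the factorability hypothesis, rather than mere strong-operator convergence of $\sum_n \Psi_n^* A_n$, does the work. I would also flag that, in contrast to the OVF case, $P_{A,\Psi}$ is only an idempotent and need not be an orthogonal projection, precisely because $S_{A,\Psi}$ need not be self-adjoint.
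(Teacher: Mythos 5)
Your proof is correct and follows exactly the route the paper intends: the paper offers no written proof beyond the remark that the theorem follows ``easily'' from Equation~(\ref{LEQUATION}), and your argument --- well-definedness from the factorability hypothesis, the adjoint formula for $\theta_\Psi^*$, the collapse of $\theta_\Psi^* L_n$ to $\Psi_n^*$ via $L_m^*L_n=\delta_{m,n}I_{\mathcal{H}_0}$, and then injectivity, surjectivity and idempotency as formal consequences of the invertibility of $S_{A,\Psi}$ --- is precisely the omitted computation. Your closing remarks, that the norm convergence of $\sum_n L_nA_nh$ (i.e.\ factorability, which the theorem's hypothesis should really state for the pair $(\{A_n\}_n,\{\Psi_n\}_n)$) is what licenses the interchange, and that $P_{A,\Psi}$ is merely an idempotent since $S_{A,\Psi}$ need not be self-adjoint, are both accurate and consistent with the paper.
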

We next define the notions of Riesz and orthonormal factorable weak OVFs. For OVFs, these notions were defined by Kaftal, Larson, and Zhang \cite{KAFTAL}.
\begin{definition}\label{RIESZOVF}
	A factorable weak OVF  $( \{A_n\}_{n},  \{\Psi_n\}_{n} )$  in $\mathcal{B}(\mathcal{H}, \mathcal{H}_0)$ is said to be a Riesz OVF   if $ P_{A,\Psi}= I_{\ell^2(\mathbb{N})}\otimes I_{\mathcal{H}_0}$. A Parseval and  Riesz OVF, i.e., $\theta_\Psi^*\theta_A=I_\mathcal{H} $ and  $\theta_A\theta_\Psi^*=I_{\ell^2(\mathbb{N})}\otimes I_{\mathcal{H}_0} $ is called as an orthonormal OVF. 
\end{definition}
\begin{proposition}\label{ORTHORESULT}
	A factorable weak OVF  $( \{A_n\}_{n},  \{\Psi_n\}_{n} )$ in $ \mathcal{B}(\mathcal{H}, \mathcal{H}_0)$ is an orthonormal OVF  if and only if it is a Parseval OVF and $ A_n\Psi_m^*=\delta_{n,m}I_{\mathcal{H}_0},\forall n,m \in \mathbb{N}$. 
\end{proposition}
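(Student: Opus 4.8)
The plan is to reduce the statement to a single operator identity on $\ell^2(\mathbb{N})\otimes\mathcal{H}_0$ and then read it off entrywise. By Definition \ref{RIESZOVF} an orthonormal weak OVF is precisely a factorable weak OVF that is simultaneously Parseval, i.e. $\theta_\Psi^*\theta_A = I_\mathcal{H}$, and Riesz, i.e. $P_{A,\Psi} = I_{\ell^2(\mathbb{N})}\otimes I_{\mathcal{H}_0}$. When the frame is Parseval we have $S_{A,\Psi} = I_\mathcal{H}$, so $P_{A,\Psi} = \theta_A S_{A,\Psi}^{-1}\theta_\Psi^* = \theta_A\theta_\Psi^*$; hence being orthonormal is equivalent to the two identities $\theta_\Psi^*\theta_A = I_\mathcal{H}$ and $\theta_A\theta_\Psi^* = I_{\ell^2(\mathbb{N})}\otimes I_{\mathcal{H}_0}$. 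Since the first identity is exactly the Parseval hypothesis appearing on both sides of the claimed equivalence, it remains only to show that, for a factorable weak OVF, $\theta_A\theta_\Psi^* = I_{\ell^2(\mathbb{N})}\otimes I_{\mathcal{H}_0}$ holds if and only if $A_n\Psi_m^* = \delta_{n,m}I_{\mathcal{H}_0}$ for all $n,m$.

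For this equivalence I would pass to the ``matrix entries'' of operators on $\ell^2(\mathbb{N})\otimes\mathcal{H}_0$ relative to the isometries $L_n$. Using the series expressions $\theta_A h = \sum_n L_n A_n h$ and $\theta_\Psi^* z = \sum_m \Psi_m^* L_m^* z$ together with the boundedness of $\theta_A$ (guaranteed by factorability) to justify composing them, one obtains
\begin{align*}
\theta_A\theta_\Psi^* = \sum_n\sum_m L_n A_n\Psi_m^* L_m^*.
\end{align*}
Compressing by $L_p^*$ on the left and $L_q$ on the right and invoking the orthogonality relations $L_p^*L_n = \delta_{p,n}I_{\mathcal{H}_0}$ and $L_m^*L_q = \delta_{m,q}I_{\mathcal{H}_0}$ from Equation (\ref{LEQUATION}) collapses the double sum and gives $L_p^*(\theta_A\theta_\Psi^*)L_q = A_p\Psi_q^*$. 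On the other hand $L_p^*(I_{\ell^2(\mathbb{N})}\otimes I_{\mathcal{H}_0})L_q = L_p^*L_q = \delta_{p,q}I_{\mathcal{H}_0}$.

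To conclude, I would use that a bounded operator $T$ on $\ell^2(\mathbb{N})\otimes\mathcal{H}_0$ is determined by the family of compressions $\{L_p^*TL_q\}_{p,q}$, which follows from the resolution of the identity $\sum_n L_nL_n^* = I_{\ell^2(\mathbb{N})}\otimes I_{\mathcal{H}_0}$ in Equation (\ref{LEQUATION}). Comparing entries then yields both implications simultaneously: $\theta_A\theta_\Psi^* = I_{\ell^2(\mathbb{N})}\otimes I_{\mathcal{H}_0}$ forces $A_p\Psi_q^* = \delta_{p,q}I_{\mathcal{H}_0}$, while conversely substituting $A_n\Psi_m^* = \delta_{n,m}I_{\mathcal{H}_0}$ into the double sum collapses it to $\sum_n L_nL_n^* = I_{\ell^2(\mathbb{N})}\otimes I_{\mathcal{H}_0}$. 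The step requiring the most care is the manipulation of the double series: one must justify interchanging $\theta_A$ with the strong-operator convergent sum defining $\theta_\Psi^* z$, which is legitimate precisely because $\theta_A$ is bounded, and then argue that the compressions pin down the full operator rather than merely its action on elementary tensors. Both points are handled cleanly by the relations recorded in Equation (\ref{LEQUATION}).
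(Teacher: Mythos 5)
Your proposal is correct and follows essentially the same route as the paper: both reduce orthonormality to the single identity $\theta_A\theta_\Psi^*=I_{\ell^2(\mathbb{N})}\otimes I_{\mathcal{H}_0}$ (using that Parsevalness makes $P_{A,\Psi}=\theta_A\theta_\Psi^*$) and then test this identity against the elementary tensors $e_m\otimes y=L_m y$, which is exactly your compression by $L_p^*$ and $L_q$. The only cosmetic difference is that the paper reads off the components of $\theta_A\theta_\Psi^*(e_m\otimes y)$ directly rather than phrasing the final step as ``an operator is determined by its compressions,'' but the computation and the use of Equation (\ref{LEQUATION}) are identical.
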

\begin{proof}
	$(\Rightarrow)$	
	 We have $\theta_A\theta_\Psi^*=I_{\ell^2(\mathbb{N})}\otimes I_{\mathcal{H}_0}.$ Hence $ e_m\otimes y=\theta_A\theta_\Psi^*(e_m\otimes y)=\sum_{n=1}^\infty L_nA_n(\sum_{k=1}^\infty\Psi^*_kL_k^*(e_m\otimes y))=\sum_{n=1}^\infty L_nA_n\Psi^*_my=\sum_{n=1}^\infty(e_n\otimes A_n\Psi^*_my)= e_m\otimes( A_n\Psi^*_m y+\sum_{n=1, n\neq m}^\infty(e_n\otimes A_n\Psi^*_my)),\forall m \in \mathbb{N}, \forall y \in\mathcal{H}_0 $. We then have  $A_n\Psi^*_my=\delta_{n,m}y,\forall y \in \mathcal{H}_0$.
	
	$(\Leftarrow)$ $ \theta_A\theta_\Psi^*=\sum_{n=1}^\infty L_nA_n(\sum_{k=1}^\infty\Psi_k^*L_k^*)=\sum_{n=1}^\infty L_nL_n^*=I_{\ell^2(\mathbb{N})}\otimes I_{\mathcal{H}_0}.$
	
\end{proof}
 In the theory of frames for Hilbert spaces, following result is known as (Naimark) dilation theorem. It was obtained independently by Han and Larson  \cite{HANLARSON} and  Kashin and Kulikova \cite{KASHINKULIKOVA} (also see \cite{CZAJA}). 
 \begin{theorem}\cite{HANLARSON, KASHINKULIKOVA}\label{NAIMAR}
 	If $ \{x_n\}_{n}$ is   a Parseval frame for a Hilbert space $ \mathcal{H}$, then there exist a Hilbert space $ \mathcal{H}_1 $ which contains $ \mathcal{H}$ isometrically, an orthonormal basis $ \{y_n\}_{n}$ for  $ \mathcal{H}_1$ and an orthogonal projection $P: \mathcal{H}_1 \to  \mathcal{H} $ such that $Py_n=x_n$, $\forall n$.
 \end{theorem}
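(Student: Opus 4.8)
The plan is to build the dilation directly from the analysis operator of the Parseval frame. Define $\theta : \mathcal{H} \to \ell^2(\mathbb{N})$ by $\theta h \coloneqq \{\langle h, x_n\rangle\}_n$. First I would verify that $\theta$ is a well-defined bounded linear operator and, crucially, an isometry: the Parseval identity $\sum_{n} |\langle h, x_n\rangle|^2 = \|h\|^2$ says exactly that $\|\theta h\|^2 = \|h\|^2$ for every $h \in \mathcal{H}$, so $\theta^*\theta = I_\mathcal{H}$. This single observation is the heart of the whole argument.

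Next I would take $\mathcal{H}_1 \coloneqq \ell^2(\mathbb{N})$ with its standard orthonormal basis $\{e_n\}_n$, and regard $\mathcal{H}$ as sitting inside $\mathcal{H}_1$ isometrically through $\theta$, identifying each $h$ with $\theta h$. Set $P \coloneqq \theta\theta^*$. Using $\theta^*\theta = I_\mathcal{H}$ one checks $P^2 = \theta(\theta^*\theta)\theta^* = \theta\theta^* = P$ and $P^* = P$, so $P$ is an orthogonal projection, and its range is $\theta(\mathcal{H})$, the copy of $\mathcal{H}$ inside $\mathcal{H}_1$.

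It then remains to identify $P e_n$. Computing the synthesis operator $\theta^*$ on basis vectors, for any $h \in \mathcal{H}$ we have $\langle \theta^* e_n, h\rangle = \langle e_n, \theta h\rangle = \overline{\langle h, x_n\rangle} = \langle x_n, h\rangle$, hence $\theta^* e_n = x_n$. Therefore $P e_n = \theta\theta^* e_n = \theta x_n$, which is precisely the image of $x_n$ under the isometric identification; so $P y_n = x_n$ for all $n$, with $y_n \coloneqq e_n$.

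I do not expect a serious obstacle here, since everything reduces to the equivalence between the Parseval condition and the analysis operator being an isometry. The only points that require a little care are the bookkeeping of the identification of $\mathcal{H}$ with $\theta(\mathcal{H})$ (so that the equality $Py_n = x_n$ is correctly read inside $\mathcal{H}_1$) and confirming that $\{e_n\}_n$ is an orthonormal basis for the chosen ambient space, which is immediate because we simply take $\mathcal{H}_1 = \ell^2(\mathbb{N})$.
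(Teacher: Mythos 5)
Your proof is correct. Note that the paper does not actually prove this statement --- it is quoted from the cited references as background --- so there is no internal proof to compare against; but your argument (Parseval $\Leftrightarrow$ the analysis operator $\theta$ is an isometry, take $\mathcal{H}_1=\ell^2(\mathbb{N})$, $P=\theta\theta^*$, and compute $\theta^*e_n=x_n$) is the standard one and is exactly the mechanism the paper itself scales up in its generalization, Theorem \ref{OPERATORDILATION}, where the idempotent $P_{A,\Psi}=\theta_AS_{A,\Psi}^{-1}\theta_\Psi^*$ plays the role of your $\theta\theta^*$. The only bookkeeping points --- that $\theta(\mathcal{H})$ is closed (automatic for an isometry), that $P^2=P=P^*$ with range $\theta(\mathcal{H})$, and that $Py_n=x_n$ is read through the identification $h\leftrightarrow\theta h$ --- are all handled in your write-up.
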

 
 Theorem \ref{NAIMAR} was generalized to OVFs in \cite{HANLIMENGTANG} which is known as  general Naimark-Han-Larson dilation theorem \cite{HANLIMENGTANG}. We now generalize this to factorable weak OVFs. First we need a lemma for this. 
 \begin{lemma}\label{DILATIONLEMMA}
 Let $( \{A_n\}_{n},  \{\Psi_n\}_{n} )$ be  a factorable  weak OVF  in $ \mathcal{B}(\mathcal{H}, \mathcal{H}_0)$. Then the range of 	$\theta_A $ is closed.
 \end{lemma}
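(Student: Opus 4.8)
The plan is to reduce the claim to the standard fact that a bounded linear operator which is bounded below has closed range, and to extract the bounded-below estimate for $\theta_A$ directly from the factorization $S_{A,\Psi}=\theta_\Psi^*\theta_A$ established in the preceding theorem. Since $(\{A_n\}_n,\{\Psi_n\}_n)$ is factorable, both $\theta_A$ and $\theta_\Psi$ are well-defined bounded operators, so in particular $\theta_\Psi^*$ is bounded; and since the pair is a weak OVF, $S_{A,\Psi}$ is invertible with bounded inverse. These two facts are all that the argument will consume.

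First I would write, for every $h\in\mathcal{H}$,
\begin{align*}
\|h\|=\|S_{A,\Psi}^{-1}S_{A,\Psi}h\|\leq \|S_{A,\Psi}^{-1}\|\,\|\theta_\Psi^*\theta_A h\|\leq \|S_{A,\Psi}^{-1}\|\,\|\theta_\Psi^*\|\,\|\theta_A h\|,
\end{align*}
so that $\|\theta_A h\|\geq c\|h\|$ with $c\coloneqq (\|S_{A,\Psi}^{-1}\|\,\|\theta_\Psi^*\|)^{-1}>0$. Thus $\theta_A$ is bounded below; note that this is a strengthening of the injectivity already recorded for $\theta_A$, and it is precisely the ingredient that upgrades injectivity to closedness of the range.

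The remaining step is the routine argument that a bounded-below operator has closed range: given a sequence $\{\theta_A h_n\}_n$ converging in $\ell^2(\mathbb{N})\otimes\mathcal{H}_0$, the inequality $\|\theta_A(h_n-h_m)\|\geq c\|h_n-h_m\|$ forces $\{h_n\}_n$ to be Cauchy, hence convergent to some $h\in\mathcal{H}$; continuity of $\theta_A$ then yields $\theta_A h_n\to \theta_A h$, so the limit lies in $\theta_A(\mathcal{H})$. I do not anticipate a genuine obstacle here — the only point demanding care is that the constant $c$ be strictly positive, which is exactly what invertibility of $S_{A,\Psi}$ together with the boundedness of $\theta_\Psi^*$ guarantees, and it is the factorability hypothesis that makes $\theta_\Psi^*$ available as a bounded operator to begin with.
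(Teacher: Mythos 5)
Your proof is correct and rests on exactly the same ingredients as the paper's: the factorization $S_{A,\Psi}=\theta_\Psi^*\theta_A$, the boundedness of $\theta_\Psi^*$ (from factorability), and the invertibility of $S_{A,\Psi}$. The paper phrases it slightly differently — given $\theta_A h_n\to y$ it applies $\theta_\Psi^*$ to deduce $h_n\to S_{A,\Psi}^{-1}\theta_\Psi^*y$ and exhibits this as an explicit preimage of $y$ — whereas you package the same estimate as a bounded-below inequality followed by the standard Cauchy argument; the two are only cosmetically different.
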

\begin{proof}
Let $ \{h_n\}_{n=1}^\infty$ in $\mathcal{H} $  be such that  $ \{\theta _Ah_n\}_{n=1}^\infty$ converges to $ y \in \mathcal{H}_0$. This gives $ S_{A,\Psi}h_n \rightarrow \theta_\Psi^*y$  as $ n \rightarrow \infty$ and this in turn gives $h_n \rightarrow S_{A,\Psi}^{-1} \theta_\Psi^*y $  as $ n \rightarrow \infty.$ An application of $ \theta_A$ gives $\theta_Ah_n \rightarrow \theta_AS_{A,\Psi}^{-1} \theta_\Psi^*y $ as $ n \rightarrow \infty.$ Therefore $ y=\theta_A(S_{A,\Psi}^{-1} \theta_\Psi^*y).$	
\end{proof}
\begin{theorem}\label{OPERATORDILATION}
	Let $( \{A_n\}_{n},  \{\Psi_n\}_{n} )$ be  a Parseval weak OVF  in $ \mathcal{B}(\mathcal{H}, \mathcal{H}_0)$ such that $ \theta_A(\mathcal{H})=\theta_\Psi(\mathcal{H})$ and $ P_{A,\Psi}$ is projection. Then there exist a Hilbert space $ \mathcal{H}_1 $ which contains $ \mathcal{H}$ isometrically and  bounded linear operators $B_n,\Phi_n:\mathcal{H}_1\rightarrow \mathcal{H}_0, \forall n  $ such that $(\{B_n\}_{n} ,\{\Phi_n\}_{n})$ is an orthonormal OVF in $ \mathcal{B}(\mathcal{H}_1, \mathcal{H}_0)$ and $B_n|_{\mathcal{H}}=  A_n,\Phi_n|_{\mathcal{H}}=\Psi_n, \forall n \in \mathbb{N}$. 
\end{theorem}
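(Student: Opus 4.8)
The plan is to realize the dilation inside the fixed Hilbert space $\mathcal{K}:=\ell^2(\mathbb{N})\otimes\mathcal{H}_0$, which will play the role of $\mathcal{H}_1$, to embed $\mathcal{H}$ into $\mathcal{K}$ by a suitable isometry $\iota$, and to obtain the orthonormal OVF as a single invertible ``distortion'' of the canonical one. The guiding observation is that an orthonormal OVF $(\{B_n\}_n,\{\Phi_n\}_n)$ on $\mathcal{K}$ is precisely the datum of an invertible $T:=\theta_B\in\mathcal{B}(\mathcal{K})$: the two defining relations $\theta_\Phi^*\theta_B=I$ and $\theta_B\theta_\Phi^*=I$ force $\theta_\Phi=(T^*)^{-1}$, and then $B_n=L_n^*T$, $\Phi_n=L_n^*(T^*)^{-1}$. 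Imposing $B_n|_{\mathcal{H}}=A_n$ and $\Phi_n|_{\mathcal{H}}=\Psi_n$ through $\iota$, applying $L_n^*$, and using $L_n^*\theta_A=A_n$, $L_n^*\theta_\Psi=\Psi_n$ from Equation (\ref{LEQUATION}), yields the two requirements $T\iota=\theta_A$ and $\iota=T^*\theta_\Psi$; eliminating $\iota$ reduces the entire theorem to producing a positive invertible $Q:=TT^*$ on $\mathcal{K}$ with $Q\theta_\Psi=\theta_A$, after which one sets $T:=Q^{1/2}$.

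First I would record the structural facts. The Parseval hypothesis reads $\theta_\Psi^*\theta_A=S_{A,\Psi}=I_{\mathcal{H}}$, and taking adjoints $\theta_A^*\theta_\Psi=I_{\mathcal{H}}$; in particular both $\theta_A$ and $\theta_\Psi$ are injective. By Lemma \ref{DILATIONLEMMA}, applied both to $(\{A_n\}_n,\{\Psi_n\}_n)$ and to its partner $(\{\Psi_n\}_n,\{A_n\}_n)$ (which is again a factorable Parseval weak OVF, its frame operator being $\theta_A^*\theta_\Psi=I$), both $\theta_A$ and $\theta_\Psi$ have closed range, and by hypothesis these ranges coincide; write $V:=\theta_A(\mathcal{H})=\theta_\Psi(\mathcal{H})$, a closed subspace. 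I would also observe that, with Parseval in force, $P_{A,\Psi}=\theta_A\theta_\Psi^*$ has range $V$ and kernel $\ker\theta_\Psi^*=(\operatorname{ran}\theta_\Psi)^\perp=V^\perp$, so the hypothesis that $P_{A,\Psi}$ is a self-adjoint projection is equivalent to $\theta_A(\mathcal{H})=\theta_\Psi(\mathcal{H})$; accordingly I shall use freely that $I_{\mathcal{K}}-P_{A,\Psi}$ is the orthogonal projection onto $V^\perp$ and that $P_{A,\Psi}\theta_\Psi=\theta_\Psi$.

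The heart of the argument, and the main obstacle, is the construction of $Q$. I would take the explicit candidate
\begin{equation*}
Q:=\theta_A\theta_A^*+(I_{\mathcal{K}}-P_{A,\Psi}),
\end{equation*}
a sum of two positive operators, hence positive, for which $Q\theta_\Psi=\theta_A\theta_A^*\theta_\Psi+\theta_\Psi-P_{A,\Psi}\theta_\Psi=\theta_A+\theta_\Psi-\theta_\Psi=\theta_A$, using $\theta_A^*\theta_\Psi=I$ and $P_{A,\Psi}\theta_\Psi=\theta_\Psi$. The delicate point is invertibility. Since $\theta_A\theta_A^*$ annihilates $V^\perp=\ker\theta_A^*$ and maps $V$ into $V$, while $I_{\mathcal{K}}-P_{A,\Psi}$ kills $V$ and equals the identity on $V^\perp$, the operator $Q$ is block diagonal for $\mathcal{K}=V\oplus V^\perp$, equal to $I$ on $V^\perp$ and to $\theta_A\theta_A^*|_V$ on $V$. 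The restriction $\theta_A^*|_V:V\to\mathcal{H}$ is a bounded bijection (injective because $V\cap\ker\theta_A^*=\{0\}$, surjective because $\theta_A^*$ is onto and kills $V^\perp$), hence bounded below by the open mapping theorem; thus $\langle\theta_A\theta_A^*v,v\rangle=\|\theta_A^*v\|^2\ge c^2\|v\|^2$ on $V$, so $Q$ is positive and bounded below, i.e. positive invertible.

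Finally I would assemble the dilation. Set $\mathcal{H}_1:=\mathcal{K}$, $T:=Q^{1/2}$ (positive invertible, $T=T^*$), $B_n:=L_n^*T$ and $\Phi_n:=L_n^*T^{-1}$ in $\mathcal{B}(\mathcal{H}_1,\mathcal{H}_0)$. From $\sum_nL_nL_n^*=I$ one gets $\theta_B=T$ and $\theta_\Phi=T^{-1}$, whence $\theta_\Phi^*\theta_B=I$ and $\theta_B\theta_\Phi^*=I$, so $(\{B_n\}_n,\{\Phi_n\}_n)$ is an orthonormal OVF (equivalently $B_n\Phi_m^*=L_n^*L_m=\delta_{n,m}I_{\mathcal{H}_0}$, as in Proposition \ref{ORTHORESULT}). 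Define $\iota:=T\theta_\Psi:\mathcal{H}\to\mathcal{H}_1$; it is isometric, since $\iota^*\iota=\theta_\Psi^*T^2\theta_\Psi=\theta_\Psi^*Q\theta_\Psi=\theta_\Psi^*\theta_A=I_{\mathcal{H}}$. Identifying $\mathcal{H}$ with $\iota(\mathcal{H})\subseteq\mathcal{H}_1$, the computations $B_n\iota=L_n^*T^2\theta_\Psi=L_n^*Q\theta_\Psi=L_n^*\theta_A=A_n$ and $\Phi_n\iota=L_n^*T^{-1}T\theta_\Psi=L_n^*\theta_\Psi=\Psi_n$ give $B_n|_{\mathcal{H}}=A_n$ and $\Phi_n|_{\mathcal{H}}=\Psi_n$, completing the proof. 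Everything apart from the invertibility of $Q$ is bookkeeping with Equation (\ref{LEQUATION}) and the factorization $S_{A,\Psi}=\theta_\Psi^*\theta_A$.
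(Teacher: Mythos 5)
Your proof is correct, but it takes a genuinely different route from the paper's. The paper enlarges the space externally: it sets $\mathcal{H}_1 \coloneqq \mathcal{H}\oplus\theta_A(\mathcal{H})^\perp$, extends each operator by $B_n(h\oplus g)\coloneqq A_nh+L_n^*(I_{\ell^2(\mathbb{N})\otimes\mathcal{H}_0}-P_{A,\Psi})g$ (and likewise $\Phi_n$), and then verifies by direct computation that $S_{B,\Phi}$ is the identity on $\mathcal{H}_1$ and $P_{B,\Phi}=I_{\ell^2(\mathbb{N})}\otimes I_{\mathcal{H}_0}$. You instead stay inside $\ell^2(\mathbb{N})\otimes\mathcal{H}_0$ and reduce the whole theorem to producing one positive invertible operator $Q$ with $Q\theta_\Psi=\theta_A$; your candidate $Q=\theta_A\theta_A^*+(I-P_{A,\Psi})$ does satisfy $Q\theta_\Psi=\theta_A$, the block-diagonal invertibility argument is sound (closed range from Lemma \ref{DILATIONLEMMA}, the identification of $\ker P_{A,\Psi}$ with $\theta_\Psi(\mathcal{H})^\perp$, and the open mapping theorem to bound $\theta_A^*$ below on $\theta_A(\mathcal{H})$), and the final verifications $\iota^*\iota=\theta_\Psi^*Q\theta_\Psi=I_\mathcal{H}$, $B_n\iota=L_n^*Q\theta_\Psi=A_n$, $\Phi_n\iota=\Psi_n$ all check out. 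What your approach buys: it exhibits the dilated pair in the canonical form $B_n=L_n^*T$, $\Phi_n=L_n^*(T^*)^{-1}$ of Theorem \ref{THAFSCHAR}, makes explicit that an orthonormal OVF on $\ell^2(\mathbb{N})\otimes\mathcal{H}_0$ is exactly the datum of an invertible operator, and treats $A$ and $\Psi$ symmetrically through the single operator $Q$. The cost is the appeal to the positive square root (spectral theorem) and a slightly less literal reading of ``contains $\mathcal{H}$ isometrically,'' whereas the paper's construction is purely algebraic and keeps $\mathcal{H}$ as an honest direct summand of $\mathcal{H}_1$; the two dilations are of course unitarily equivalent, since the paper's $\theta_B$ is itself a bounded bijection of $\mathcal{H}\oplus\theta_A(\mathcal{H})^\perp$ onto $\ell^2(\mathbb{N})\otimes\mathcal{H}_0$.
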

\begin{proof}
	We first see  that  $P_{A,\Psi}$ is the  orthogonal projection from $ \ell^2(\mathbb{N})\otimes \mathcal{H}_0$ onto $\theta_A(\mathcal{H})=\theta_\Psi(\mathcal{H})$. Define $ \mathcal{H}_1\coloneqq\mathcal{H}\oplus \theta_A(\mathcal{H})^\perp$. From Lemma \ref{DILATIONLEMMA}, $\mathcal{H}_1$ becomes a Hilbert space. Then $\mathcal{H} \ni h \mapsto h\oplus 0 \in \mathcal{H}_1 $ is an isometry. Set $P_{A,\Psi}^\perp\coloneqq I_{\ell^2(\mathbb{N})\otimes \mathcal{H}_0}-P_{A,\Psi}$ and  define 
	\begin{align*}
	 B_n:\mathcal{H}_1\ni h\oplus g\mapsto A_nh+L_n^*P_{A,\Psi}^\perp g \in \mathcal{H}_0,  \quad \Phi_n:\mathcal{H}_1\ni h\oplus g\mapsto \Psi_nh+L_n^*P_{A,\Psi}^\perp g \in \mathcal{H}_0 , \quad \forall n \in \mathbb{N}.
	\end{align*}
	Then  clearly $B_n|_{\mathcal{H}}=  A_n,\Phi_n|_{\mathcal{H}}=\Psi_n, \forall n \in \mathbb{N}$. Now 
	\begin{align*}
	\theta_B(h\oplus g)=\sum_{n=1}^\infty L_nA_nh+\sum_{n=1}^\infty L_nL_n^*P_{A,\Psi}^\perp g=\theta_Ah+P_{A,\Psi}^\perp g, \quad \forall  h\oplus g \in \mathcal{H}_1.
	\end{align*}
	 Similarly $\theta_\Phi(h\oplus g)=\theta_\Psi h+P_{A,\Psi}^\perp g, \forall h\oplus g\in \mathcal{H}_1 $.  Also 
	 \begin{align*}
	 \langle \theta_B^*z,h\oplus g \rangle&= \langle z,  \theta_B(h\oplus g) \rangle = \langle \theta_A^*z,  h\rangle+\langle P_{A,\Psi}^\perp z,  g\rangle \\
	 &= \langle \theta_A^*z\oplus P_{A,\Psi}^\perp z, h\oplus g\rangle , \quad \forall z \in \ell^2(\mathbb{N})\otimes \mathcal{H}_0 , \forall  h\oplus g \in \mathcal{H}_1.
	 \end{align*}
	  Hence $\theta_B^*z=\theta_A^*z\oplus P_{A,\Psi}^\perp z, \forall z \in \ell^2(\mathbb{N})\otimes \mathcal{H}_0 $ and similarly $\theta_\Phi^*z=\theta_\Psi^*z\oplus P_{A,\Psi}^\perp z, \forall z \in \ell^2(\mathbb{N})\otimes \mathcal{H}_0$.
	By  using $\theta_A(\mathcal{H})=\theta_\Psi(\mathcal{H}) $ and $\theta_\Psi^*P_{A,\Psi}^\perp=0=P_{A,\Psi}^\perp\theta_A ,$ we get  \begin{align*}
	S_{B,\Phi}(h\oplus g)&= \theta_\Phi^*(\theta_Ah+ P_{A,\Psi}^\perp g)=\theta_\Psi^*(\theta_Ah+P_{A,\Psi}^\perp g)\oplus P_{A,\Psi}^\perp(\theta_Ah+P_{A,\Psi}^\perp g)\\
	&=(S_{A,\Psi}h+0)\oplus(0+P_{A,\Psi}^\perp g)=S_{A,\Psi}h\oplus P_{A,\Psi}^\perp g\\
	&=I_\mathcal{H}h\oplus I_{\theta_A(\mathcal{H})^\perp}g, \quad \forall h\oplus g\in \mathcal{H}_1.
	\end{align*}
	 Hence $(\{B_n\}_{n},\{\Phi_n\}_{n} )$ is  a Parseval weak OVF in $ \mathcal{B}(\mathcal{H}_1, \mathcal{H}_0)$. We further find 
	 \begin{align*}
	 P_{B,\Phi}z&=\theta_BS_{B,\Phi}^{-1}\theta_\Phi^*z=\theta_B\theta_\Phi^*z=\theta_B(\theta_\Psi^*z\oplus P_{A,\Psi}^\perp z)=\theta_A(\theta_\Psi^*z)+ P_{A,\Psi}^\perp(P_{A,\Psi}^\perp z)=P_{A,\Psi} z+P_{A,\Psi}^\perp z\\
	 &=P_{A,\Psi} z+((I_{\ell^2(\mathbb{N})}\otimes I_{\mathcal{H}_0})-P_{A,\Psi})z =(I_{\ell^2(\mathbb{N})}\otimes I_{\mathcal{H}_0} )z, \quad\forall z \in  \ell^2(\mathbb{N})\otimes \mathcal{H}_0.
	 \end{align*}
	Therefore $(\{B_n\}_{n} ,\{\Phi_n\}_{n})$ is a Riesz weak OVF in $ \mathcal{B}(\mathcal{H}_1, \mathcal{H}_0)$. Thus $(\{B_n\}_{n} ,\{\Phi_n\}_{n})$ is  an orthonormal weak OVF in $ \mathcal{B}(\mathcal{H}_1, \mathcal{H}_0)$.
\end{proof}

For the frames in Hilbert spaces there is a characterization using the standard orthonormal basis $\{e_n\}_n$ for  $\ell^2(\mathbb{N})$ given by Holub \cite{HOLUB}. This states that a sequence $\{\tau_n\}_n$ in  $\mathcal{H}$ is a
frame for $\mathcal{H}$	if and only if there exists a surjective bounded linear operator $T:\ell^2(\mathbb{N}) \to \mathcal{H}$ such that $Te_n=\tau_n$, for all $n \in \mathbb{N}$ (see \cite{HOLUB}). We now derive such a result for factorable weak OVFs.
\begin{theorem}\label{THAFSCHAR}
	A pair  $( \{A_n\}_{n},  \{\Psi_n\}_{n} )$ is a factorable weak OVF  in $ \mathcal{B}(\mathcal{H}, \mathcal{H}_0)$
	if and only if 
	\begin{align*}
	A_n=L_n^* U, \quad \Psi_n=L_n^*V, \quad \forall n \in \mathbb{N},
	\end{align*}  
	where $U, V:\mathcal{H} \rightarrow \ell^2(\mathbb{N}) \otimes \mathcal{H}_0$ are bounded linear operators such that $V^*U$ is bounded invertible.
\end{theorem}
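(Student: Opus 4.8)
The plan is to prove both implications by identifying the operators $U$ and $V$ with the analysis operators $\theta_A$ and $\theta_\Psi$, and then to reduce every computation to the two identities in Equation (\ref{LEQUATION}): the orthogonality relation $L_n^*L_m=\delta_{n,m}I_{\mathcal{H}_0}$ and the strong-operator resolution of the identity $\sum_{n=1}^\infty L_nL_n^*=I_{\ell^2(\mathbb{N})}\otimes I_{\mathcal{H}_0}$. Both directions are short once these are in hand, so the real work is bookkeeping about where boundedness and strong-operator convergence come from.

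For the forward implication, I would assume $(\{A_n\}_n,\{\Psi_n\}_n)$ is a factorable weak OVF. By the definition of factorability, $\theta_A$ and $\theta_\Psi$ are then well-defined bounded linear operators from $\mathcal{H}$ into $\ell^2(\mathbb{N})\otimes\mathcal{H}_0$, so I set $U\coloneqq\theta_A$ and $V\coloneqq\theta_\Psi$. To recover $A_n=L_n^*U$, I apply the bounded operator $L_n^*$ to $\theta_A h=\sum_m L_m A_m h$; since $L_n^*$ commutes with the strong-operator-convergent series, the orthogonality relation collapses the sum, giving $L_n^*\theta_A h=\sum_m \delta_{n,m}A_m h=A_n h$. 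The identical computation yields $\Psi_n=L_n^*V$. Finally $V^*U=\theta_\Psi^*\theta_A=S_{A,\Psi}$ by the factorization noted above, and this is bounded invertible because $(\{A_n\}_n,\{\Psi_n\}_n)$ is a weak OVF, which closes this direction.

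For the converse, I would assume $A_n=L_n^*U$ and $\Psi_n=L_n^*V$ with $U,V$ bounded and $V^*U$ invertible. Each $A_n$ (and each $\Psi_n$) is then a composition of bounded maps, hence lies in $\mathcal{B}(\mathcal{H},\mathcal{H}_0)$. I would first establish factorability by showing $\theta_A=U$: indeed $\theta_A h=\sum_n L_n A_n h=\sum_n L_n L_n^* U h=(\sum_n L_nL_n^*)(Uh)=Uh$, where the last equality applies the strong-operator resolution of the identity to the vector $Uh$; symmetrically $\theta_\Psi=V$, so both analysis operators are bounded and the pair is factorable. Then the frame operator is $S_{A,\Psi}=\sum_n\Psi_n^*A_n=\sum_n V^*L_nL_n^*U$, and pulling the bounded operator $V^*$ through the partial sums gives $S_{A,\Psi}h=V^*(\sum_n L_nL_n^*Uh)=V^*Uh$. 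Hence $S_{A,\Psi}$ converges in the strong-operator topology to the bounded invertible operator $V^*U$, so $(\{A_n\}_n,\{\Psi_n\}_n)$ is a weak OVF, and being factorable it is a factorable weak OVF.

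The one step that requires care, rather than routine substitution, is the interchange of the bounded operators $L_n^*$ and $V^*$ with the strong-operator-topology sums in both directions. This is the only place convergence could fail, and it is legitimate precisely because a bounded operator is continuous for norm limits in the target space: the relevant partial sums converge in norm when evaluated at a fixed vector, so the operator may be moved inside the limit. No convergence argument beyond Equation (\ref{LEQUATION}) is therefore needed, and I expect no genuine obstacle, only the need to state these continuity justifications explicitly.
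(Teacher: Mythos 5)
Your proposal is correct and follows essentially the same route as the paper: in both directions you identify $U=\theta_A$ and $V=\theta_\Psi$ (the paper writes these as $\sum_n L_nA_n$ and $\sum_n L_n\Psi_n$), recover $A_n=L_n^*U$ and $\Psi_n=L_n^*V$ from the orthogonality relation $L_n^*L_m=\delta_{n,m}I_{\mathcal{H}_0}$, and obtain $S_{A,\Psi}=V^*U$ from the resolution of the identity $\sum_n L_nL_n^*=I_{\ell^2(\mathbb{N})}\otimes I_{\mathcal{H}_0}$. Your explicit remarks on pulling the bounded operators $L_n^*$ and $V^*$ through the strong-operator limits simply make precise what the paper leaves implicit.
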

\begin{proof}
	$(\Leftarrow)$ Clearly $\theta_A$ and $\theta_\Psi$ are well-defined bounded linear operators. Let $h\in \mathcal{H}$. Then using Equation (\ref{LEQUATION}), we have 
	\begin{align}\label{ORIGINALEQA}
	S_{A, \Psi}h= \sum_{n=1}^\infty
(L_n^*V)^*L_n^*Uh=V^*\left(\sum_{n=1}^\infty L_nL_n^*\right)Uh=V^*Uh.
	\end{align} 
	Hence $S_{A, \Psi}$ is bounded invertible. \\
	$(\Rightarrow)$ Define $U\coloneqq \sum_{n=1}^\infty L_nA_n$, $V\coloneqq \sum_{n=1}^\infty L_n\Psi_n$. Then
	\begin{align*}
	&L_n^* U=L_n^* \left(\sum_{k=1}^\infty L_kA_k\right)=\sum_{k=1}^\infty L_n^*L_kA_k=A_n,\\
	&L_n^* V=L_n^* \left(\sum_{k=1}^\infty L_k\Psi_k\right)=\sum_{k=1}^\infty L_n^*L_k\Psi_k=\Psi_n, \quad \forall n \in \mathbb{N}
	\end{align*}
	   and 
	\begin{align*}
	V^*U=\left(\sum_{n=1}^\infty \Psi_n^*L_n^*\right)\left(\sum_{k=1}^\infty L_kA_k\right) =  \sum_{n=1}^\infty \Psi_n^*A_n=S_{A, \Psi}
	\end{align*}
	 which is bounded invertible.
\end{proof}
Using Theorem \ref{THAFSCHAR} we can characterize Riesz and orthonormal factorable weak OVFs.
\begin{corollary}\label{FIRSTCOROLLARY}
	A pair  $( \{A_n\}_{n},  \{\Psi_n\}_{n} )$ is a Riesz factorable weak OVF  in $ \mathcal{B}(\mathcal{H}, \mathcal{H}_0)$
if and only if 
\begin{align*}
A_n=L_n^* U, \quad \Psi_n=L_n^*V, \quad \forall n \in \mathbb{N},
\end{align*}  
where $U, V:\mathcal{H} \rightarrow \ell^2(\mathbb{N}) \otimes \mathcal{H}_0$ are bounded linear operators such that $V^*U$ is bounded invertible and $ U(V^*U)^{-1}V^* =I_{\ell^2(\mathbb{N}) \otimes \mathcal{H}_0}$.	
\end{corollary}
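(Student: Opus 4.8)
The plan is to deduce this corollary directly from Theorem \ref{THAFSCHAR} together with the definition of a Riesz factorable weak OVF in Definition \ref{RIESZOVF}. Since Theorem \ref{THAFSCHAR} already supplies the equivalence governing factorability, the only additional content here is to translate the Riesz condition $P_{A,\Psi}=I_{\ell^2(\mathbb{N})\otimes\mathcal{H}_0}$ into the stated operator identity $U(V^*U)^{-1}V^*=I_{\ell^2(\mathbb{N})\otimes\mathcal{H}_0}$, and then bolt it onto the existing characterization.

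The first key step is to identify the operators $U$ and $V$ furnished by Theorem \ref{THAFSCHAR} with the analysis operators $\theta_A$ and $\theta_\Psi$. Under the correspondence $A_n=L_n^*U$ and $\Psi_n=L_n^*V$, Equation (\ref{LEQUATION}) gives $\theta_A=\sum_{n=1}^\infty L_nA_n=\left(\sum_{n=1}^\infty L_nL_n^*\right)U=U$, and similarly $\theta_\Psi=V$. This is the one place where convergence in the strong-operator topology genuinely enters, and it follows at once from the relation $\sum_{n=1}^\infty L_nL_n^*=I_{\ell^2(\mathbb{N})}\otimes I_{\mathcal{H}_0}$ already recorded in (\ref{LEQUATION}).

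With this identification in hand, the second step is purely algebraic: by the factorization $S_{A,\Psi}=\theta_\Psi^*\theta_A$ we get $S_{A,\Psi}=V^*U$, hence $S_{A,\Psi}^{-1}=(V^*U)^{-1}$, and therefore $P_{A,\Psi}=\theta_A S_{A,\Psi}^{-1}\theta_\Psi^*=U(V^*U)^{-1}V^*$. By Definition \ref{RIESZOVF}, a factorable weak OVF is Riesz precisely when $P_{A,\Psi}=I_{\ell^2(\mathbb{N})\otimes\mathcal{H}_0}$, which after this substitution is exactly the condition $U(V^*U)^{-1}V^*=I_{\ell^2(\mathbb{N})\otimes\mathcal{H}_0}$. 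Conjoining this with the factorability equivalence of Theorem \ref{THAFSCHAR} yields both implications simultaneously, so no separate forward and backward arguments are really needed.

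I do not anticipate a genuine obstacle, since the corollary is essentially a reformulation of Theorem \ref{THAFSCHAR} specialized through the extra Riesz hypothesis. The one point deserving care is the identity $\theta_A=U$ (and $\theta_\Psi=V$), which must be justified as an equality of operators on all of $\mathcal{H}$ rather than merely on a dense subset; but this is immediate from the strong-operator convergence of $\sum_{n=1}^\infty L_nL_n^*$ to the identity, so the verification remains routine.
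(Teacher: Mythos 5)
Your proposal is correct and follows essentially the same route as the paper: both reduce the corollary to Theorem \ref{THAFSCHAR} together with the identity $P_{A,\Psi}=\theta_A S_{A,\Psi}^{-1}\theta_\Psi^*=U(V^*U)^{-1}V^*$ and the definition of a Riesz factorable weak OVF. Your write-up is in fact slightly more careful than the paper's two-line proof, since you explicitly justify the identification $\theta_A=U$ and $\theta_\Psi=V$ via $\sum_{n=1}^\infty L_nL_n^*=I_{\ell^2(\mathbb{N})}\otimes I_{\mathcal{H}_0}$, a step the paper leaves implicit.
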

\begin{proof}
	$(\Leftarrow)$ $P_{A,\Psi}= U(V^*U)^{-1}V^* =I_{\ell^2(\mathbb{N}) \otimes \mathcal{H}_0}$.
	
	$(\Rightarrow)$ Let $U$ and $V$ be as in Theorem \ref{THAFSCHAR}. Then 
	$U(V^*U)^{-1}V^*=P_{A,\Psi}=I_{\ell^2(\mathbb{N}) \otimes \mathcal{H}_0}$.
\end{proof}
\begin{corollary}
	A pair  $( \{A_n\}_{n},  \{\Psi_n\}_{n} )$ is an orthonormal  factorable weak OVF  in $ \mathcal{B}(\mathcal{H}, \mathcal{H}_0)$
	if and only if 
	\begin{align*}
	A_n=L_n^* U, \quad \Psi_n=L_n^*V, \quad \forall n \in \mathbb{N},
	\end{align*}  
	where $U, V:\mathcal{H} \rightarrow \ell^2(\mathbb{N}) \otimes \mathcal{H}_0$ are bounded linear operators such that $V^*U$ is bounded invertible and $ V^*U=I_\mathcal{H}$, $I_{\ell^2(\mathbb{N}) \otimes \mathcal{H}_0}= UV^*$.	
\end{corollary}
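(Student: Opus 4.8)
The plan is to derive this entirely from Theorem \ref{THAFSCHAR} and Corollary \ref{FIRSTCOROLLARY} together with the definition of orthonormality, so that essentially no fresh computation is needed. Recall from Definition \ref{RIESZOVF} that ``orthonormal'' means simultaneously Parseval, i.e. $S_{A,\Psi}=I_\mathcal{H}$, and Riesz, i.e. $P_{A,\Psi}=I_{\ell^2(\mathbb{N})}\otimes I_{\mathcal{H}_0}$. First I would invoke Theorem \ref{THAFSCHAR}: the pair $( \{A_n\}_{n}, \{\Psi_n\}_{n})$ is a factorable weak OVF exactly when $A_n=L_n^*U$ and $\Psi_n=L_n^*V$ for bounded linear operators $U,V:\mathcal{H}\to\ell^2(\mathbb{N})\otimes\mathcal{H}_0$ with $V^*U$ bounded invertible. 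In that situation one has $\theta_A=U$ and $\theta_\Psi=V$ (since $\sum_n L_nL_n^*=I$ by Equation (\ref{LEQUATION})), and hence $S_{A,\Psi}=\theta_\Psi^*\theta_A=V^*U$. Thus the whole question reduces to translating ``Parseval'' and ``Riesz'' into identities for $U$ and $V$.

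For the forward direction I would assume the pair is an orthonormal factorable weak OVF, so factorability supplies $U$ and $V$ as above. The Parseval condition $S_{A,\Psi}=I_\mathcal{H}$ reads $V^*U=I_\mathcal{H}$. The Riesz condition $P_{A,\Psi}=I_{\ell^2(\mathbb{N})\otimes\mathcal{H}_0}$, by Corollary \ref{FIRSTCOROLLARY}, is $U(V^*U)^{-1}V^*=I_{\ell^2(\mathbb{N})\otimes\mathcal{H}_0}$; substituting the already-established $V^*U=I_\mathcal{H}$ collapses the middle factor and yields $UV^*=I_{\ell^2(\mathbb{N})\otimes\mathcal{H}_0}$. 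These are precisely the two stated identities. Conversely, if $A_n=L_n^*U$ and $\Psi_n=L_n^*V$ with $V^*U=I_\mathcal{H}$ and $UV^*=I_{\ell^2(\mathbb{N})\otimes\mathcal{H}_0}$, then $V^*U=I_\mathcal{H}$ is invertible, so Theorem \ref{THAFSCHAR} makes the pair a factorable weak OVF with $S_{A,\Psi}=V^*U=I_\mathcal{H}$, i.e. Parseval; and $P_{A,\Psi}=U(V^*U)^{-1}V^*=UV^*=I_{\ell^2(\mathbb{N})\otimes\mathcal{H}_0}$, i.e. Riesz. By Definition \ref{RIESZOVF} it is orthonormal.

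There is no genuine obstacle in this argument, as it is a bookkeeping consequence of the preceding characterizations. The only point that demands a little care is the reduction $U(V^*U)^{-1}V^*=UV^*$: this simplification is valid only once the Parseval identity $V^*U=I_\mathcal{H}$ is in hand, so the two hypotheses cannot be treated as wholly independent and must be applied in the correct order. Beyond that, I would simply keep the ambient space of each identity operator explicit to avoid conflating $I_\mathcal{H}$ with $I_{\ell^2(\mathbb{N})\otimes\mathcal{H}_0}$.
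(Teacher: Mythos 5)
Your proof is correct, and its overall structure (identify $\theta_A=U$, $\theta_\Psi=V$, hence $S_{A,\Psi}=V^*U$ and $P_{A,\Psi}=U(V^*U)^{-1}V^*$, then translate Parseval and Riesz) is the same as the paper's. The one place where you diverge is the forward direction: the paper derives $UV^*=I_{\ell^2(\mathbb{N})\otimes\mathcal{H}_0}$ by invoking Proposition \ref{ORTHORESULT} (orthonormality forces $A_n\Psi_m^*=\delta_{n,m}I_{\mathcal{H}_0}$) and then summing the series $UV^*=\sum_n L_nL_n^*$ term by term, whereas you obtain it purely algebraically by substituting the already-established identity $V^*U=I_\mathcal{H}$ into the Riesz condition $U(V^*U)^{-1}V^*=I_{\ell^2(\mathbb{N})\otimes\mathcal{H}_0}$ from Corollary \ref{FIRSTCOROLLARY}. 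Your route is slightly more economical, since it needs no pointwise information about the operators $A_n\Psi_m^*$ and avoids the series manipulation entirely; the paper's route has the side benefit of making the ``matrix units'' structure $A_n\Psi_m^*=\delta_{n,m}I_{\mathcal{H}_0}$ explicit. Your cautionary remark about the order of application (Parseval before collapsing the middle factor of $P_{A,\Psi}$) is exactly the right point to flag, and the reverse direction matches the paper's verbatim.
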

\begin{proof}
	We use Corollary \ref{FIRSTCOROLLARY}. $ (\Leftarrow)$ $S_{A,\Psi}=V^*U=I_\mathcal{H}, P_{A,\Psi}=\theta_AS_{A,\Psi}^{-1}\theta_\Psi^*=\theta_A\theta_\Psi^*= \theta_FUV^*\theta_F^*=I_{\ell^2(\mathbb{N})}\otimes I_{\mathcal{H}_0}.$
	
	$(\Rightarrow)$ $V^*U=S_{A,\Psi}=I_\mathcal{H},$ and by using Proposition \ref{ORTHORESULT},
	\begin{align*}
	UV^*&= \left(\sum_{n=1}^\infty L_n^*A_n\right)\left( \sum_{k=1}^\infty\Psi_k^*L_k\right) =
	 \sum_{n=1}^\infty L_nL_n^*=I_{\ell^2(\mathbb{N}) \otimes \mathcal{H}_0}. 
	\end{align*}
\end{proof}
There is a  variation of result of Holub  given by Christensen \cite{CHRISTENSENBOOK} which states as follows. Let $\{\omega_n\}_n$ be an orthonormal basis for   $\mathcal{H}$. Then  a sequence $\{\tau_n\}_n$ in  $\mathcal{H}$ is a
	frame for $\mathcal{H}$	if and only if there exists a surjective bounded linear operator $T:\mathcal{H} \to \mathcal{H}$ such that $T\omega_n=\tau_n$, for all $n \in \mathbb{N}$.	We now derive such a result for factorable weak OVF. For this, we need to know the definition of orthonormal basis for operators.
\begin{definition}\cite{SUN1}\label{ONBDEFINITIONOVHS}
	A collection  $ \{F_n \}_{n}$ in $ \mathcal{B}(\mathcal{H}, \mathcal{H}_0)$ is said to be an orthonormal basis in  $ \mathcal{B}(\mathcal{H},\mathcal{H}_0)$ if 
	$$\langle F_n ^*y, F_k^*z\rangle=\delta_{n,k}\langle y, z\rangle  , ~\forall y, z \in \mathcal{H}_0, ~\forall n , k \in \mathbb{N} ~ \text{and } ~ \sum\limits_{n=1}^\infty\|F_n h\|^2=\|h\|^2, ~ \forall h \in \mathcal{H}.$$
\end{definition}
We observe $\langle F_n ^*y, F_k^*z\rangle=\delta_{n ,k}\langle y, z\rangle  , \forall y, z \in \mathcal{H}_0, \forall n , k \in \mathbb{N}$ if and only if $F_n F_k^*=\delta_{n ,k}I_{\mathcal{H}_0} , \forall n , k \in \mathbb{N}$. Hence if $ \{F_n \}_{n}$ is an orthonormal basis, then $\|F_n \|^2=\|F_n F_n ^*\|=1, \forall n  \in \mathbb{N} $ and   $ \sum_{n=1}^\infty F_n ^*F_n =I_\mathcal{H}$.
\begin{example}
 Equation (\ref{LEQUATION}) says  that $ \{L^*_n\}_{n}$  is an orthonormal basis in $ \mathcal{B}(\ell^2(\mathbb{N})\otimes\mathcal{H}_0, \mathcal{H}_0)$.
\end{example}
\begin{theorem}\label{SECONDCHAROV}
Let $ \{F_n\}_{n}$ be an  orthonormal basis in $ \mathcal{B}(\mathcal{H},\mathcal{H}_0).$ Then 	a pair  $( \{A_n\}_{n},  \{\Psi_n\}_{n} )$ is a factorable weak OVF  in $ \mathcal{B}(\mathcal{H}, \mathcal{H}_0)$
if and only if 
\begin{align*}
A_n=F_n U, \quad \Psi_n=F_nV, \quad \forall n \in \mathbb{N},
\end{align*}  
 where $ U,V :\mathcal{H}\to \mathcal{H} $ are bounded linear operators such that  $ V^*U$ is bounded  invertible.	
\end{theorem}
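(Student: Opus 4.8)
The plan is to run the proof of Theorem~\ref{THAFSCHAR} with the concrete orthonormal system $\{L_n^*\}_n$ replaced by the abstract orthonormal basis $\{F_n\}_n$, the whole argument being powered by the two identities $F_nF_k^*=\delta_{n,k}I_{\mathcal{H}_0}$ and $\sum_{n=1}^\infty F_n^*F_n=I_{\mathcal{H}}$ recorded just after Definition~\ref{ONBDEFINITIONOVHS}. The auxiliary operator that carries the argument is $\theta_F\coloneqq\sum_{n=1}^\infty L_nF_n:\mathcal{H}\to\ell^2(\mathbb{N})\otimes\mathcal{H}_0$; using the two identities together with Equation~(\ref{LEQUATION}) one checks $\theta_F^*\theta_F=\sum_{n}F_n^*F_n=I_{\mathcal{H}}$ and $\theta_F\theta_F^*=\sum_{n}L_nL_n^*=I_{\ell^2(\mathbb{N})\otimes\mathcal{H}_0}$, so $\theta_F$ is a unitary. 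This is the clean replacement for the fact that $\{L_n^*\}_n$ is itself an orthonormal basis.

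For the easy direction $(\Leftarrow)$, I assume $A_n=F_nU$ and $\Psi_n=F_nV$ with $V^*U$ bounded invertible. Factorability is immediate, since the orthonormal basis property gives $\sum_{n}\|A_nh\|^2=\sum_{n}\|F_nUh\|^2=\|Uh\|^2$, so that $\theta_A$ is a well-defined bounded operator with $\|\theta_A\|\le\|U\|$, and likewise for $\theta_\Psi$. Then I compute the frame operator directly:
\begin{align*}
S_{A,\Psi}=\sum_{n=1}^\infty\Psi_n^*A_n=\sum_{n=1}^\infty V^*F_n^*F_nU=V^*\Big(\sum_{n=1}^\infty F_n^*F_n\Big)U=V^*U,
\end{align*}
which is bounded invertible by hypothesis; hence the pair is a factorable weak OVF.

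For the converse $(\Rightarrow)$, I would define $U\coloneqq\theta_F^*\theta_A$ and $V\coloneqq\theta_F^*\theta_\Psi$, both bounded because factorability guarantees $\theta_A,\theta_\Psi$ are bounded. Expanding with $L_n^*L_k=\delta_{n,k}I_{\mathcal{H}_0}$ shows $U=\sum_{n}F_n^*A_n$ and $V=\sum_{n}F_n^*\Psi_n$. The identity $F_nF_k^*=\delta_{n,k}I_{\mathcal{H}_0}$ then recovers the claimed factorizations, $F_nU=\sum_{k}F_nF_k^*A_k=A_n$ and $F_nV=\Psi_n$, and the same orthogonality collapses the double sum to
\begin{align*}
V^*U=\sum_{n,k}\Psi_n^*F_nF_k^*A_k=\sum_{n=1}^\infty\Psi_n^*A_n=S_{A,\Psi},
\end{align*}
which is bounded invertible, so $U,V$ have all the required properties. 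As a cross-check, $V^*U=\theta_\Psi^*\theta_F\theta_F^*\theta_A=\theta_\Psi^*\theta_A=S_{A,\Psi}$ using that $\theta_F$ is unitary, which agrees with the direct computation.

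The only genuine point requiring care is the convergence of the series defining $U$ and $V$ and the interchange of the two summations in $V^*U$; this is exactly where the factorability hypothesis is used. Indeed, $F_nF_k^*=\delta_{n,k}I_{\mathcal{H}_0}$ forces the partial-sum estimate $\big\|\sum_{n=N}^{M}F_n^*A_nh\big\|^2=\sum_{n=N}^{M}\|A_nh\|^2$, whose right-hand side is a tail of $\|\theta_Ah\|^2=\sum_n\|A_nh\|^2$ and hence tends to $0$; thus $\sum_n F_n^*A_nh$ converges and equals $\theta_F^*\theta_Ah$. Presenting $U,V$ as the compositions $\theta_F^*\theta_A$ and $\theta_F^*\theta_\Psi$ sidesteps any ad hoc convergence bookkeeping, so I expect no real obstacle beyond this routine verification.
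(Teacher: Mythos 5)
Your proposal is correct and follows essentially the same route as the paper: in the $(\Rightarrow)$ direction the paper likewise sets $U=\sum_n F_n^*A_n$ and $V=\sum_n F_n^*\Psi_n$, justifies convergence via the same partial-sum identity $\bigl\|\sum_{n=1}^m F_n^*A_nh\bigr\|^2=\sum_{n=1}^m\|A_nh\|^2$, and then verifies $F_nU=A_n$, $F_nV=\Psi_n$ and $V^*U=S_{A,\Psi}$, while the $(\Leftarrow)$ direction is the same direct computation of $S_{A,\Psi}=V^*U$. Your packaging of $U,V$ as $\theta_F^*\theta_A$ and $\theta_F^*\theta_\Psi$ with $\theta_F$ unitary is only a cosmetic tidying of the paper's argument.
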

\begin{proof}
$(\Leftarrow)$ $ \sum_{n=1}^\infty L_n(F_nU)= (\sum_{n=1}^\infty L_nF_n)U,$ $ \sum_{n=1}^\infty L_n(F_nV)= (\sum_{n=1}^\infty L_nF_n)V.$ These show analysis operators for $ (\{F_nU\}_{n},\{F_nV\}_{n})$ are well-defined bounded linear operators  and the equality $$\sum_{n=1}^\infty(F_nV)^*(F_nU)=V^*U$$
 shows that it is a factorable weak  OVF.

$(\Rightarrow)$ Let $( \{A_n\}_{n},  \{\Psi_n\}_{n} )$ be a factorable weak OVF. Note that the series $  \sum_{n=1}^\infty F_n^*A_n$ and $  \sum_{n=1}^\infty F_n^*\Psi_n$ converge. In fact,  for each $h \in \mathcal{H}$, 
\begin{align*}
\left\|\sum_{n=1}^mF_n ^*A_n h\right\|^2=\left\langle\sum_{n=1}^mF_n^*A_nh, \sum_{k=1}^mF_k^*A_kh\right\rangle= \sum_{n=1}^m\left\langle A_nh, F_n\left(\sum_{k=1}^mF_k^*A_kh\right) \right\rangle=\sum_{n=1}^m\|A_nh\|^2.
\end{align*}
 which  converges to $\| \theta_Ah\|^2=\|\sum_{n=1}^\infty L_nA_nh\|^2=\sum_{n=1}^\infty\|A_nh\|^2$. Define $U\coloneqq \sum_{n=1}^\infty F_n^*A_n$ and $V\coloneqq \sum_{n=1}^\infty F_n^*\Psi_n$. Then $F_nU=A_n, F_nV=\Psi_n ,  \forall n \in \mathbb{N} $ and 
 \begin{align*}
 V^*U=\left(\sum_{n=1}^\infty\Psi_n^*F_n\right)\left(\sum_{k=1}^\infty F_k^*A_k\right)=\sum_{n=1}^\infty\Psi_n^*A_n=S_{A,\Psi}
 \end{align*}
  which is bounded invertible.	
\end{proof}	 
\begin{corollary}
Let $ \{F_n\}_{n}$ be an  orthonormal basis in $ \mathcal{B}(\mathcal{H},\mathcal{H}_0).$ Then 	a pair  $( \{A_n\}_{n},  \{\Psi_n\}_{n} )$ is  		
	\begin{enumerate}[\upshape(i)]
		\item a Riesz factorable weak OVF  in $ \mathcal{B}(\mathcal{H}, \mathcal{H}_0)$
		if and only if 
		\begin{align*}
		A_n=F_n U, \quad \Psi_n=F_nV, \quad \forall n \in \mathbb{N},
		\end{align*}  
		where $ U,V :\mathcal{H}\to \mathcal{H} $ are bounded linear operators such that  $ V^*U$ is bounded  invertible and $ U(V^*U)^{-1}V^* =I_{\mathcal{H}}$.	
		\item an orthonormal  factorable weak OVF  in $ \mathcal{B}(\mathcal{H}, \mathcal{H}_0)$
		if and only if 
		\begin{align*}
		A_n=F_n U, \quad \Psi_n=F_nV, \quad \forall n \in \mathbb{N},
		\end{align*}  
		where $ U,V :\mathcal{H}\to \mathcal{H} $ are bounded linear operators such that  $ V^*U$ is bounded  invertible and $ V^*U=I_\mathcal{H}= UV^*$. 	
	\end{enumerate}
\end{corollary}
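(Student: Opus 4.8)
The plan is to lean entirely on Theorem \ref{SECONDCHAROV}, which already supplies the factorable weak OVF characterization $A_n = F_n U$, $\Psi_n = F_n V$ with $V^* U$ bounded invertible; all that remains is to translate the two extra conditions (Riesz in (i), orthonormal in (ii)) into statements about $U$ and $V$. The single object driving the argument is the operator $\theta_F \coloneqq \sum_{n=1}^\infty L_n F_n : \mathcal{H} \to \ell^2(\mathbb{N}) \otimes \mathcal{H}_0$ attached to the orthonormal basis $\{F_n\}_n$, so the first step I would carry out is to show that $\theta_F$ is a unitary. Using the two facts recorded after Definition \ref{ONBDEFINITIONOVHS}, namely $F_n F_k^* = \delta_{n,k} I_{\mathcal{H}_0}$ and $\sum_{n=1}^\infty F_n^* F_n = I_\mathcal{H}$, together with Equation (\ref{LEQUATION}), one computes $\theta_F^* \theta_F = \sum_{n=1}^\infty F_n^* F_n = I_\mathcal{H}$ and $\theta_F \theta_F^* = \sum_{n=1}^\infty L_n L_n^* = I_{\ell^2(\mathbb{N})} \otimes I_{\mathcal{H}_0}$, so $\theta_F$ is unitary onto $\ell^2(\mathbb{N}) \otimes \mathcal{H}_0$.

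Next I would rewrite the frame data through $\theta_F$. From $A_n = F_n U$ and $\Psi_n = F_n V$ the analysis operators factor as $\theta_A = \theta_F U$ and $\theta_\Psi = \theta_F V$, whence $\theta_\Psi^* = V^* \theta_F^*$. Combining these with the identity $S_{A,\Psi} = V^* U$ coming from Theorem \ref{SECONDCHAROV} yields
\begin{align*}
P_{A,\Psi} = \theta_A S_{A,\Psi}^{-1} \theta_\Psi^* = \theta_F \, U (V^* U)^{-1} V^* \, \theta_F^*.
\end{align*}
For part (i), since $\theta_F$ is unitary, conjugation by $\theta_F$ is an isomorphism carrying $I_\mathcal{H}$ to $I_{\ell^2(\mathbb{N}) \otimes \mathcal{H}_0}$; hence $P_{A,\Psi} = I_{\ell^2(\mathbb{N}) \otimes \mathcal{H}_0}$ if and only if $U(V^*U)^{-1}V^* = \theta_F^* \, I_{\ell^2(\mathbb{N}) \otimes \mathcal{H}_0} \, \theta_F = I_\mathcal{H}$, which is precisely the Riesz condition of Definition \ref{RIESZOVF}. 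This settles both implications of (i) simultaneously.

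For part (ii) I would simply invoke (i). An orthonormal factorable weak OVF is by definition both Parseval and Riesz: the Parseval condition gives $S_{A,\Psi} = V^* U = I_\mathcal{H}$, and substituting this into the Riesz identity $U(V^*U)^{-1}V^* = I_\mathcal{H}$ from (i) collapses it to $U V^* = I_\mathcal{H}$. Conversely, $V^* U = I_\mathcal{H} = U V^*$ forces $S_{A,\Psi} = V^* U = I_\mathcal{H}$ (Parseval) and, through (i), the Riesz condition as well, so the pair is orthonormal.

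I do not expect a genuine obstacle here; the proof is a clean corollary of Theorem \ref{SECONDCHAROV}. The only points demanding care are, first, to justify the factorizations $\theta_A = \theta_F U$ and $\theta_\Psi = \theta_F V$ as honest operator identities (i.e.\ the strong-operator convergence of the series defining $\theta_F$, which is guaranteed by the orthonormal basis axioms), and second, to use the unitarity of $\theta_F$ genuinely to transport the Riesz identity between $\ell^2(\mathbb{N}) \otimes \mathcal{H}_0$ and $\mathcal{H}$ rather than silently assuming it.
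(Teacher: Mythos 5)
Your proof is correct and follows essentially the same route as the paper: both arguments reduce everything to the conjugation identity $P_{A,\Psi}=\theta_F\,U(V^*U)^{-1}V^*\,\theta_F^*$ with $\theta_F=\sum_{n}L_nF_n$, the paper carrying out the two directions of (i) by the explicit computations $\theta_F^*\theta_F=I_\mathcal{H}$ and $\theta_F\theta_F^*=I_{\ell^2(\mathbb{N})}\otimes I_{\mathcal{H}_0}$ that you package once as ``$\theta_F$ is unitary.'' The only real divergence is the forward direction of (ii): the paper computes $UV^*=\sum_{n}F_n^*F_n=I_\mathcal{H}$ as a series, invoking Proposition \ref{ORTHORESULT} (i.e.\ $A_n\Psi_m^*=\delta_{n,m}I_{\mathcal{H}_0}$), whereas you obtain $UV^*=I_\mathcal{H}$ purely algebraically by substituting the Parseval identity $V^*U=I_\mathcal{H}$ into the Riesz identity of part (i); your shortcut is valid and slightly more economical, at the cost of not exhibiting the biorthogonality relations explicitly.
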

\begin{proof}
\begin{enumerate}[\upshape(i)]
	\item $ (\Leftarrow)$ $ P_{A,\Psi}=\theta_AS_{A,\Psi}^{-1}\theta_\Psi^*=(\sum_{n=1}^\infty L_nF_nU)(V^*U)^{-1}(\sum_{k=1}^\infty V^*F^*_kL_k^*)=\theta_FU(V^*U)^{-1}V^*\theta_F^*=\theta_FI_\mathcal{H}\theta_F^* =\sum_{n=1}^\infty L_nF_n(\sum_{k=1}^\infty F_k^*L^*_k)=\sum_{n=1}^\infty L_nL_n^*=I_{\ell^2(\mathbb{N})}\otimes I_{\mathcal{H}_0}$. 
	
	$ (\Rightarrow)$ Let $U$ and $V$ be as in Theorem \ref{SECONDCHAROV}. Then 
	\begin{align*}
	U(V^*U)^{-1}V^*&=\left(\sum_{k=1}^\infty F_k^*A_k\right)S_{A,\Psi}^{-1}\left(\sum_{n=1}^\infty \Psi_n^*F_n\right)\\
	&=\left(\sum_{r=1}^\infty F_r^*L_r^*\right)\left(\sum_{k=1}^\infty L_kA_k\right)S_{A,\Psi}^{-1}\left(\sum_{n=1}^\infty \Psi_n^*L_n^*\right)\left(\sum_{m=1}^\infty L_mF_m\right)\\
	&=\left(\sum_{r=1}^\infty F_r^*L_r^*\right)\theta_AS_{A,\Psi}^{-1}\theta_\Psi^*\left(\sum_{m=1}^\infty L_mF_m\right)=\left(\sum_{r=1}^\infty F_r^*L_r^*\right)P_{A,\Psi}\left(\sum_{m=1}^\infty L_mF_m\right) \\
	&=\left(\sum_{r=1}^\infty F_r^*L_r^*\right)(I_{\ell^2(\mathbb{N})}\otimes I_{\mathcal{H}_0})\left(\sum_{m=1}^\infty L_mF_m\right) =\left(\sum_{r=1}^\infty F_r^*L_r^*\right)\left(\sum_{m=1}^\infty L_mF_m\right) \\
	&=\sum_{r=1}^\infty F_r^*F_r=I_{\mathcal{H}}.
	\end{align*}
	\item We use (i). $ (\Leftarrow)$ $S_{A,\Psi}=V^*U=I_\mathcal{H}, P_{A,\Psi}=\theta_AS_{A,\Psi}^{-1}\theta_\Psi^*=\theta_A\theta_\Psi^*= \theta_FUV^*\theta_F^*=\theta_FI_\mathcal{H}\theta_F^*=I_{\ell^2(\mathbb{N})}\otimes I_{\mathcal{H}_0}.$
	
	$(\Rightarrow)$ $V^*U=S_{A,\Psi}=I_\mathcal{H}$ and using Proposition \ref{ORTHORESULT}, $	UV^*= \left(\sum_{n=1}^\infty F_n^*A_n\right)\left( \sum_{k=1}^\infty\Psi_k^*F_k\right) =\sum_{n=1}^\infty F_n^*F_n=I_\mathcal{H}$.
	\end{enumerate}	
\end{proof}
We next derive another characterization which is free from natural numbers.
\begin{theorem}\label{OPERATORCHARACTERIZATIONHILBERT2}
	Let $\{A_n\}_{n},\{\Psi_n\}_{n}$ be in $ \mathcal{B}(\mathcal{H}, \mathcal{H}_0).$ Then  $( \{A_n\}_{n},  \{\Psi_n\}_{n} )$  is a factorable weak OVF  
	\begin{enumerate}[\upshape(i)]
		\item   if and only if $$U:\ell^2(\mathbb{N})\otimes \mathcal{H}_0 \ni y\mapsto\sum\limits_{n=1}^\infty A_n^*L_n^*y \in \mathcal{H}, ~\text{and} ~ V:\ell^2(\mathbb{N})\otimes \mathcal{H}_0 \ni z\mapsto\sum\limits_{n=1}^\infty \Psi_n^*L^*_nz \in \mathcal{H} $$ 
		are well-defined bounded linear operators   such that  $  VU^*$ is bounded invertible.  
		\item    if and only if $$U:\ell^2(\mathbb{N})\otimes \mathcal{H}_0 \ni y\mapsto\sum\limits_{n=1}^\infty A_n^*L_n^*y \in \mathcal{H}, ~\text{and} ~ S: \mathcal{H} \ni g\mapsto \sum\limits_{n=1}^\infty L_n\Psi_ng \in \ell^2(\mathbb{N})\otimes \mathcal{H}_0 $$ 
		are well-defined bounded linear operators  such that  $  S^*U^*$  is bounded invertible.
		\item  if and only if  $$R:   \mathcal{H} \ni h\mapsto \sum\limits_{n=1}^\infty L_nA_nh \in \ell^2(\mathbb{N})\otimes \mathcal{H}_0, ~\text{and} ~ V: \ell^2(\mathbb{N})\otimes \mathcal{H}_0 \ni z\mapsto\sum\limits_{n=1}^\infty \Psi_n^*L_n^*z \in \mathcal{H} $$ 
		are well-defined bounded linear operators   such that  $  VR$ is bounded invertible.
		\item  if and only if  $$ R:   \mathcal{H} \ni h\mapsto \sum\limits_{n=1}^\infty L_nA_nh \in \ell^2(\mathbb{N})\otimes \mathcal{H}_0, ~\text{and} ~  S:   \mathcal{H} \ni g\mapsto \sum\limits_{n=1}^\infty L_n\Psi_ng \in \ell^2(\mathbb{N})\otimes \mathcal{H}_0 $$
		are well-defined bounded linear operators  such that  $ S^*R $ is bounded invertible.  
	\end{enumerate}
	
\end{theorem}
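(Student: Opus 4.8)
The plan is to recognise that the eight operators named in (i)--(iv) are merely the two analysis operators $\theta_A,\theta_\Psi$ and the two synthesis operators $\theta_A^*,\theta_\Psi^*$ written in four different combinations, and hence to collapse all four equivalences to the definition of a factorable weak OVF. First I would record the identifications $R=\theta_A$, $S=\theta_\Psi$, $U=\theta_A^*$, $V=\theta_\Psi^*$, so that $U^*=\theta_A=R$ and $S^*=\theta_\Psi=V$. That $U=\theta_A^*$ (and $V=\theta_\Psi^*$) whenever both sides are defined is a one-line computation: for $h\in\mathcal H$ and $z\in\ell^2(\mathbb N)\otimes\mathcal H_0$,
\[
\langle\theta_Ah,z\rangle=\sum_{n=1}^\infty\langle A_nh,L_n^*z\rangle=\Big\langle h,\sum_{n=1}^\infty A_n^*L_n^*z\Big\rangle .
\]

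The technical core, which I expect to be the main obstacle, is the following lemma, to be applied once to $\{A_n\}_n$ and once to $\{\Psi_n\}_n$: the map $h\mapsto\sum_nL_nA_nh$ is a well-defined bounded operator if and only if $y\mapsto\sum_nA_n^*L_n^*y$ is, in which case the two are mutually adjoint. The delicate point is that well-definedness means only strong-operator convergence of the partial sums $\sum_{n=1}^mL_nA_n$, and the adjoint operation is not continuous for that topology, so one cannot simply take adjoints and pass to the limit. I would instead route everything through the Bessel condition $\sum_n\|A_nh\|^2\le C\|h\|^2$. Since $\big\|\sum_{n=1}^mL_nA_nh\big\|^2=\sum_{n=1}^m\|A_nh\|^2$ by Equation (\ref{LEQUATION}), boundedness of $\theta_A$ is exactly this Bessel bound. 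Granting it, for $y$ with coordinates $y_n=L_n^*y$ one has $\big\|\sum_{n\in F}A_n^*y_n\big\|\le\|\theta_A\|\,\big(\sum_{n\in F}\|y_n\|^2\big)^{1/2}$ by Cauchy--Schwarz, so the synthesis series is Cauchy and defines a bounded operator; conversely, testing a bounded $U$ against $y^{(m)}=\sum_{n=1}^mL_nA_nh$ and using $L_k^*y^{(m)}=A_kh$ for $k\le m$ gives $\sum_{n=1}^m\|A_nh\|^2=\langle Uy^{(m)},h\rangle\le\|U\|\,\|y^{(m)}\|\,\|h\|$, whence the Bessel bound $\sum_n\|A_nh\|^2\le\|U\|^2\|h\|^2$ and the boundedness of $\theta_A$.

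With the lemma in hand, each of the four boundedness hypotheses is equivalent to the single statement that both $\theta_A$ and $\theta_\Psi$ are bounded. It then remains to identify the four products with the frame operator. Using the already established factorization $S_{A,\Psi}=\theta_\Psi^*\theta_A$ together with $U^*=\theta_A$ and $S^*=\theta_\Psi$, one checks directly that
\[
VU^*=\theta_\Psi^*\theta_A,\qquad S^*U^*=\theta_\Psi^*\theta_A,\qquad VR=\theta_\Psi^*\theta_A,\qquad S^*R=\theta_\Psi^*\theta_A,
\]
so in every case the invertibility of the displayed product is precisely the invertibility of $S_{A,\Psi}$. Consequently each of (i)--(iv) holds if and only if $\theta_A$ and $\theta_\Psi$ are bounded and $S_{A,\Psi}$ is invertible, which is exactly the assertion that $(\{A_n\}_n,\{\Psi_n\}_n)$ is a factorable weak OVF, and the proof is complete.
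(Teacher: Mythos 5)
Your proof is correct and follows essentially the same route as the paper: identify $U=\theta_A^*$, $V=\theta_\Psi^*$, $R=\theta_A$, $S=\theta_\Psi$, and observe that each displayed product is $\theta_\Psi^*\theta_A=S_{A,\Psi}$. The paper's own proof consists of exactly these identifications stated in two lines; your version additionally proves the one point the paper leaves implicit, namely that boundedness of the analysis map is equivalent to boundedness of the synthesis map (via the Bessel bound $\sum_n\|A_nh\|^2\le C\|h\|^2$ and the test vectors $y^{(m)}=\sum_{n=1}^mL_nA_nh$), which is a genuine and correctly executed improvement in rigor rather than a different approach.
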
 
\begin{proof}
	We prove (i) and others are similar.  $(\Rightarrow)$ Now $U=\theta_A^*$, $V=\theta_\Psi^*$ and $VU^*=\theta_\Psi^*\theta_A=S_{A,\Psi}$.
	
	$(\Leftarrow)$ Now $\theta_A=U^*$, $\theta_\Psi=V^*$ and $S_{A,\Psi}=\theta_\Psi^*\theta_A=VU^*$.
\end{proof}

Now we try to characterize all dual OVFs. For the case of Hilbert spaces, this was done by Li \cite{LI}.
\begin{lemma}\label{FIRSTLEMMA}
Let $( \{A_n\}_{n},  \{\Psi_n\}_{n} )$ is a factorable weak OVF in    $\mathcal{B}(\mathcal{H}, \mathcal{H}_0)$. Then a factorable weak  OVF  $ (\{B_n\}_{n} , \{\Phi_n\}_{n} )$  in $\mathcal{B}(\mathcal{H}, \mathcal{H}_0)$	is a dual for $( \{A_n\}_{n},  \{\Psi_n\}_{n} )$ if and only if 
\begin{align*}
B_n=L_n^* U, \quad \Phi_n=L_n^*V^*, \quad \forall n \in \mathbb{N}
\end{align*}
where $U:\mathcal{H} \rightarrow \ell^2(\mathbb{N}) \otimes \mathcal{H}_0$  is a bounded right-inverse of $\theta_\Psi^* $, $V: \ell^2(\mathbb{N}) \otimes \mathcal{H}_0\to \mathcal{H}$ is a bounded left-inverse of $\theta_A $ such that $VU$ is bounded invertible.
\end{lemma}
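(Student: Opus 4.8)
The plan is to reduce the statement to Theorem~\ref{THAFSCHAR} together with two short computations that convert the two duality equations into the asserted right- and left-inverse conditions. The key observation is that the only extra input beyond "being a factorable weak OVF" is duality, and duality is linear in the $B_n$ and $\Phi_n$, so it should translate transparently once the pair is written in the factored form $B_n=L_n^*U$, $\Phi_n=L_n^*V^*$.

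First I would record the structural fact. By Theorem~\ref{THAFSCHAR}, the pair $(\{B_n\}_{n},\{\Phi_n\}_{n})$ is a factorable weak OVF if and only if $B_n=L_n^*U$ and $\Phi_n=L_n^*W$ for some bounded linear operators $U,W:\mathcal{H}\to\ell^2(\mathbb{N})\otimes\mathcal{H}_0$ with $W^*U$ bounded invertible. Writing $V\coloneqq W^*:\ell^2(\mathbb{N})\otimes\mathcal{H}_0\to\mathcal{H}$ puts this in exactly the form $B_n=L_n^*U,\ \Phi_n=L_n^*V^*$ with $VU=W^*U$ bounded invertible, which already matches the invertibility clause of the lemma. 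So all that remains is to show that, for a pair of this form, the duality equations $\sum_n\Psi_n^*B_n=\sum_n\Phi_n^*A_n=I_{\mathcal{H}}$ are equivalent to $\theta_\Psi^*U=I_{\mathcal{H}}$ and $V\theta_A=I_{\mathcal{H}}$.

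For this I would compute the two series directly, exactly as in the proof of Theorem~\ref{THAFSCHAR}. Using $B_n=L_n^*U$ and the synthesis formula $\theta_\Psi^*=\sum_n\Psi_n^*L_n^*$,
\begin{align*}
\sum_{n=1}^\infty\Psi_n^*B_n=\left(\sum_{n=1}^\infty\Psi_n^*L_n^*\right)U=\theta_\Psi^*U,
\end{align*}
so $\sum_n\Psi_n^*B_n=I_{\mathcal{H}}$ says exactly that $U$ is a bounded right-inverse of $\theta_\Psi^*$. Similarly, from $\Phi_n=L_n^*V^*$ we get $\Phi_n^*=VL_n$, whence
\begin{align*}
\sum_{n=1}^\infty\Phi_n^*A_n=V\left(\sum_{n=1}^\infty L_nA_n\right)=V\theta_A,
\end{align*}
so $\sum_n\Phi_n^*A_n=I_{\mathcal{H}}$ says precisely that $V$ is a bounded left-inverse of $\theta_A$. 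Running these identities in both directions gives both implications: in the forward direction I start from a dual factorable weak OVF, extract $U,V$ from Theorem~\ref{THAFSCHAR}, and read off the two inverse conditions; in the converse direction I take $U,V$ with the stated inverse and invertibility properties, set $B_n=L_n^*U,\ \Phi_n=L_n^*V^*$, invoke Theorem~\ref{THAFSCHAR} (using that $VU$ is invertible) to see the pair is a factorable weak OVF, and use the same two identities to verify duality.

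The only point needing care — and the main obstacle — is the justification of the interchanges of summation, that is, that the partial sums $\sum_{n=1}^m\Psi_n^*L_n^*(Uh)$ converge to $\theta_\Psi^*Uh$ and $\sum_{n=1}^m VL_nA_nh$ to $V\theta_Ah$ in the strong-operator topology for each fixed $h$. These follow because, by hypothesis, both pairs are factorable, so the relevant analysis and synthesis operators are bounded and the series defining $\theta_\Psi^*$ and $\theta_A$ converge in the strong-operator topology; boundedness of $U$ and $V$ then lets the fixed operator pass through the limit, exactly as in Theorem~\ref{THAFSCHAR}. No positivity or self-adjointness of the frame operator is invoked, which is precisely why the argument carries over verbatim to the weak (non-self-adjoint) setting.
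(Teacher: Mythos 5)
Your proof is correct and follows essentially the same route as the paper: both arguments factor the candidate dual through Theorem~\ref{THAFSCHAR} (the paper takes $U=\theta_B$, $V=\theta_\Phi^*$, which is exactly your choice), and both reduce the duality equations to $\theta_\Psi^*U=I_\mathcal{H}$ and $V\theta_A=I_\mathcal{H}$ via the identities $\sum_n\Psi_n^*B_n=\theta_\Psi^*\theta_B$ and $\sum_n\Phi_n^*A_n=\theta_\Phi^*\theta_A$. Your extra paragraph justifying the interchange of summation is a welcome addition the paper leaves implicit.
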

\begin{proof}
$(\Leftarrow)$  ``If" part of proof of Theorem \ref{THAFSCHAR}, says that $ (\{B_n\}_{n} , \{\Phi_n\}_{n} )$ is a factorable weak OVF in  $\mathcal{B}(\mathcal{H}, \mathcal{H}_0)$. We now  check for the duality of $ (\{B_n\}_{n} , \{\Phi_n\}_{n} )$. Consider $\theta_\Phi^*\theta_A=V^* \theta_A=I_\mathcal{H} $, $ \theta_\Psi^*\theta_B=\theta_\Psi^* U =I_\mathcal{H}$.\\
$(\Rightarrow)$ Let $ (\{B_n\}_{n} , \{\Phi_n\}_{n} )$  be a dual factorable weak OVF for  $( \{A_n\}_{n},  \{\Psi_n\}_{n} )$.  Then $\theta_\Psi^*\theta_B =I_\mathcal{H}= \theta_\Phi^*\theta_A $. Define $ U\coloneqq\theta_B, V\coloneqq\theta_\Phi^*.$ Then $U:\mathcal{H} \rightarrow \ell^2(\mathbb{N}) \otimes \mathcal{H}_0$ is a bounded  right-inverse of $\theta_\Psi^* $, $V: \ell^2(\mathbb{N}) \otimes \mathcal{H}_0\to \mathcal{H}$ is a left inverse of $\theta_A $ such that $VU=\theta_\Phi^*\theta_B=S_{B,\Phi}$ is bounded invertible. We now see 
\begin{align*}
L_n^* U=L_n^*\left(\sum\limits_{k=1}^\infty L_kB_k\right)=B_n, \quad L_n^*V^*=L_n^*\left(\sum\limits_{k=1}^\infty L_k\Phi_k\right)=\Phi_n, \quad \forall n \in \mathbb{N}.
\end{align*}
\end{proof}
\begin{lemma}\label{SECONDLEMMA}
Let $( \{A_n\}_{n},  \{\Psi_n\}_{n} )$ be a  factorable weak OVF in    $\mathcal{B}(\mathcal{H}, \mathcal{H}_0)$. Then  
\begin{enumerate}[\upshape(i)]
	\item $R:\mathcal{H} \to \ell^2(\mathbb{N})\otimes\mathcal{H}_0 $ is a bounded right-inverse of $ \theta_\Psi^*$ if and only if 
	\begin{align*}
	R=\theta_AS_{A,\Psi}^{-1}+(I_{\ell^2(\mathbb{N})\otimes\mathcal{H}_0}-\theta_AS_{A,\Psi}^{-1}\theta_\Psi^*)U,
	\end{align*}
	where $U :\mathcal{H} \to \ell^2(\mathbb{N})\otimes\mathcal{H}_0$ is a bounded linear operator.
\item $L:\ell^2(\mathbb{N})\otimes\mathcal{H}_0\rightarrow \mathcal{H} $ is a bounded left-inverse of $ \theta_A$ if and only if 
	\begin{align*}
	L=S_{A,\Psi}^{-1}\theta_\Psi^*+V(I_{\ell^2(\mathbb{N})\otimes\mathcal{H}_0}-\theta_A S_{A,\Psi}^{-1}\theta_\Psi^*),
	\end{align*}  
	 where $V:\ell^2(\mathbb{N})\otimes\mathcal{H}_0\to\mathcal{H}$ is a bounded linear operator.
\end{enumerate}		
\end{lemma}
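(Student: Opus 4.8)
The plan is to treat both statements as instances of the standard description of all one-sided inverses of a bounded linear operator once one particular one-sided inverse is in hand. The factorization $S_{A,\Psi}=\theta_\Psi^*\theta_A$ supplied by the factorization theorem immediately gives two canonical choices. Since $\theta_\Psi^*(\theta_A S_{A,\Psi}^{-1})=S_{A,\Psi}S_{A,\Psi}^{-1}=I_{\mathcal{H}}$, the operator $\theta_A S_{A,\Psi}^{-1}$ is a bounded right-inverse of $\theta_\Psi^*$; and since $(S_{A,\Psi}^{-1}\theta_\Psi^*)\theta_A=S_{A,\Psi}^{-1}S_{A,\Psi}=I_{\mathcal{H}}$, the operator $S_{A,\Psi}^{-1}\theta_\Psi^*$ is a bounded left-inverse of $\theta_A$. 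The idempotent $P_{A,\Psi}=\theta_A S_{A,\Psi}^{-1}\theta_\Psi^*$ together with its complement $I-P_{A,\Psi}$ then governs the ``free part'' of each parametrization.

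For (i) I would first check that every operator of the stated form is a right-inverse. Writing $R=\theta_A S_{A,\Psi}^{-1}+(I-P_{A,\Psi})U$ and applying $\theta_\Psi^*$ on the left, the first term yields $I_{\mathcal{H}}$, while the second vanishes because $\theta_\Psi^*(I-P_{A,\Psi})=\theta_\Psi^*-S_{A,\Psi}S_{A,\Psi}^{-1}\theta_\Psi^*=0$. Conversely, given any bounded right-inverse $R$ with $\theta_\Psi^*R=I_{\mathcal{H}}$, I would substitute $U=R$ into the right-hand side: the term $(I-P_{A,\Psi})R=R-\theta_A S_{A,\Psi}^{-1}(\theta_\Psi^*R)=R-\theta_A S_{A,\Psi}^{-1}$ cancels against $\theta_A S_{A,\Psi}^{-1}$, recovering $R$ exactly. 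This shows the stated family is precisely the set of all bounded right-inverses of $\theta_\Psi^*$.

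Part (ii) runs in perfect symmetry, now composing on the right. One verifies that $L=S_{A,\Psi}^{-1}\theta_\Psi^*+V(I-P_{A,\Psi})$ satisfies $L\theta_A=I_{\mathcal{H}}$, since $(I-P_{A,\Psi})\theta_A=\theta_A-\theta_A S_{A,\Psi}^{-1}S_{A,\Psi}=0$, and conversely any left-inverse $L$ is recovered by taking $V=L$, because $L(I-P_{A,\Psi})=L-(L\theta_A)S_{A,\Psi}^{-1}\theta_\Psi^*=L-S_{A,\Psi}^{-1}\theta_\Psi^*$. Every step is a one-line algebraic identity resting only on the factorization $S_{A,\Psi}=\theta_\Psi^*\theta_A$ and on $S_{A,\Psi}^{-1}$ being a genuine two-sided inverse, so there is no substantive obstacle. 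The only point needing care is bookkeeping of domains and codomains, so that $(I-P_{A,\Psi})U$ and $V(I-P_{A,\Psi})$ compose correctly as maps $\mathcal{H}\to\ell^2(\mathbb{N})\otimes\mathcal{H}_0$ and $\ell^2(\mathbb{N})\otimes\mathcal{H}_0\to\mathcal{H}$, respectively.
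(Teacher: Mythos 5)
Your proposal is correct and follows essentially the same route as the paper: verify the canonical one-sided inverses $\theta_A S_{A,\Psi}^{-1}$ and $S_{A,\Psi}^{-1}\theta_\Psi^*$ via the factorization $S_{A,\Psi}=\theta_\Psi^*\theta_A$, check that the perturbation term is annihilated by $\theta_\Psi^*$ (resp.\ kills $\theta_A$), and recover an arbitrary one-sided inverse by substituting it for the free parameter. The only cosmetic difference is that you phrase the cancellations through the idempotent $P_{A,\Psi}$, which the paper writes out explicitly.
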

\begin{proof}
\begin{enumerate}[\upshape (i)]
	\item $(\Leftarrow)$ Let $U :\mathcal{H} \to \ell^2(\mathbb{N})\otimes\mathcal{H}_0$ be a bounded linear operator. Then $ \theta_\Psi^*(\theta_AS_{A,\Psi}^{-1}+(I_{\ell^2(\mathbb{N})\otimes\mathcal{H}_0}-\theta_AS_{A,\Psi}^{-1}\theta_\Psi^*)U)=I_\mathcal{H}+\theta_\Psi^*U-\theta_\Psi^*U=I_\mathcal{H}$. Therefore  $\theta_AS_{A,\Psi}^{-1}+(I_{\ell^2(\mathbb{N})\otimes\mathcal{H}_0}-\theta_AS_{A,\Psi}^{-1}\theta_\Psi^*)U$ is a bounded right-inverse of $\theta_\Psi^*$.
	
	$(\Rightarrow)$ Let $R:\mathcal{H} \to \ell^2(\mathbb{N})\otimes\mathcal{H}_0 $ be a bounded right-inverse of $ \theta_\Psi^*$. Define $U\coloneqq R$. Then $	\theta_AS_{A,\Psi}^{-1}+(I_{\ell^2(\mathbb{N})\otimes\mathcal{H}_0}-\theta_AS_{A,\Psi}^{-1}\theta_\Psi^*)U=	\theta_AS_{A,\Psi}^{-1}+(I_{\ell^2(\mathbb{N})\otimes\mathcal{H}_0}-\theta_AS_{A,\Psi}^{-1}\theta_\Psi^*)R=\theta_AS_{A,\Psi}^{-1}+R-\theta_AS_{A,\Psi}^{-1}=R$.
	\item $(\Leftarrow)$ Let $V: \ell^2(\mathbb{N})\otimes\mathcal{H}_0\rightarrow \mathcal{H}$ be a bounded linear operator. Then $(S_{A,\Psi}^{-1}\theta_\Psi^*+V(I_{\ell^2(\mathbb{N})\otimes\mathcal{H}_0}-\theta_A S_{A,\Psi}^{-1}\theta_\Psi^*))\theta_A=I_\mathcal{H}+V\theta_A-V\theta_A I_\mathcal{H}=I_\mathcal{H}$. Therefore  $S_{A,\Psi}^{-1}\theta_\Psi^*+V(I_{\ell^2(\mathbb{N})\otimes\mathcal{H}_0}-\theta_A S_{A,\Psi}^{-1}\theta_\Psi^*)$ is a bounded left-inverse of $\theta_A$.
	
	$(\Rightarrow)$ Let $ L:\ell^2(\mathbb{N})\otimes\mathcal{H}_0\rightarrow \mathcal{H}$ be a bounded left-inverse of $ \theta_A$. Define $V\coloneqq L$. Then $S_{A,\Psi}^{-1}\theta_\Psi^*+V(I_{\ell^2(\mathbb{N})\otimes\mathcal{H}_0}-\theta_A S_{A,\Psi}^{-1}\theta_\Psi^*) =S_{A,\Psi}^{-1}\theta_\Psi^*+L(I_{\ell^2(\mathbb{N})\otimes\mathcal{H}_0}-\theta_A S_{A,\Psi}^{-1}\theta_\Psi^*)=S_{A,\Psi}^{-1}\theta_\Psi^*+L-I_{\mathcal{H}}S_{A,\Psi}^{-1}\theta_\Psi^*= L$. 	
\end{enumerate}	
 \end{proof}
\begin{theorem}
Let $( \{A_n\}_{n},  \{\Psi_n\}_{n} )$ be a  factorable  weak OVF in $\mathcal{B}(\mathcal{H}, \mathcal{H}_0)$. Then a  factorable weak OVF  $ (\{B_n\}_{n} , \{\Phi_n\}_{n} )$   in $\mathcal{B}(\mathcal{H}, \mathcal{H}_0)$ is a dual  for $( \{A_n\}_{n},  \{\Psi_n\}_{n} )$ if and only if
\begin{align*}
&B_n=A_nS_{A,\Psi}^{-1}+L_n^*U-A_nS_{A,\Psi}^{-1}\theta_\Psi^*U,\\
&\Phi_n=\Psi_n(S_{A,\Psi}^{-1})^*+L_n^*V^*-\Psi_n (S_{A,\Psi}^{-1})^*\theta_A^*V^*, \quad \forall n \in \mathbb{N}
\end{align*}
such that the operator 
\begin{align*}
S_{A, \Psi}^{-1}+VU-V\theta_AS_{A, \Psi}^{-1}\theta_\Psi^* U
\end{align*}
is bounded invertible, where   $U :\mathcal{H} \to \ell^2(\mathbb{N})\otimes\mathcal{H}_0$  and $V:\ell^2(\mathbb{N})\otimes\mathcal{H}_0\to\mathcal{H}$ are bounded linear operators.	
\end{theorem}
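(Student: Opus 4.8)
The plan is to chain together Lemma \ref{FIRSTLEMMA} and Lemma \ref{SECONDLEMMA}. By Lemma \ref{FIRSTLEMMA}, a factorable weak OVF $(\{B_n\}_{n},\{\Phi_n\}_{n})$ is a dual for $(\{A_n\}_{n},\{\Psi_n\}_{n})$ precisely when its analysis operator $\theta_B$ is a bounded right-inverse of $\theta_\Psi^*$ and $\theta_\Phi^*$ is a bounded left-inverse of $\theta_A$, with $\theta_\Phi^*\theta_B$ bounded invertible. Lemma \ref{SECONDLEMMA} then supplies the general form of every such one-sided inverse. Accordingly I would substitute
$$\theta_B=\theta_AS_{A,\Psi}^{-1}+(I_{\ell^2(\mathbb{N})\otimes\mathcal{H}_0}-\theta_AS_{A,\Psi}^{-1}\theta_\Psi^*)U,\qquad \theta_\Phi^*=S_{A,\Psi}^{-1}\theta_\Psi^*+V(I_{\ell^2(\mathbb{N})\otimes\mathcal{H}_0}-\theta_AS_{A,\Psi}^{-1}\theta_\Psi^*),$$
with $U:\mathcal{H}\to\ell^2(\mathbb{N})\otimes\mathcal{H}_0$ and $V:\ell^2(\mathbb{N})\otimes\mathcal{H}_0\to\mathcal{H}$ arbitrary bounded operators; as $U,V$ range over all such operators these parametrize all candidate duals.

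To recover the component operators I would apply $L_n^*$ and use $L_n^*\theta_A=A_n$ and $L_n^*\theta_\Psi=\Psi_n$, both immediate from Equation (\ref{LEQUATION}). Since $B_n=L_n^*\theta_B$, the first display gives
$$B_n=A_nS_{A,\Psi}^{-1}+L_n^*U-A_nS_{A,\Psi}^{-1}\theta_\Psi^*U.$$
For $\Phi_n=L_n^*\theta_\Phi$ I would first take adjoints in the expression for $\theta_\Phi^*$, using $(\theta_AS_{A,\Psi}^{-1}\theta_\Psi^*)^*=\theta_\Psi(S_{A,\Psi}^{-1})^*\theta_A^*$, to obtain $\theta_\Phi=\theta_\Psi(S_{A,\Psi}^{-1})^*+V^*-\theta_\Psi(S_{A,\Psi}^{-1})^*\theta_A^*V^*$, and then hit this with $L_n^*$ to reach the claimed formula for $\Phi_n$.

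The remaining task is to identify the invertibility condition $\theta_\Phi^*\theta_B$ with the stated operator. Writing $P_{A,\Psi}=\theta_AS_{A,\Psi}^{-1}\theta_\Psi^*$ and expanding the product of the two parametrized expressions, the cross-terms vanish because of the annihilation identities $\theta_\Psi^*(I_{\ell^2(\mathbb{N})\otimes\mathcal{H}_0}-P_{A,\Psi})=0$ and $(I_{\ell^2(\mathbb{N})\otimes\mathcal{H}_0}-P_{A,\Psi})\theta_A=0$, both of which follow from $\theta_\Psi^*\theta_A=S_{A,\Psi}$. What survives is $S_{A,\Psi}^{-1}$ from the leading terms together with $V(I_{\ell^2(\mathbb{N})\otimes\mathcal{H}_0}-P_{A,\Psi})^2U$; here the key point is that $P_{A,\Psi}$ is idempotent (again a one-line consequence of $\theta_\Psi^*\theta_A=S_{A,\Psi}$), so $(I_{\ell^2(\mathbb{N})\otimes\mathcal{H}_0}-P_{A,\Psi})^2=I_{\ell^2(\mathbb{N})\otimes\mathcal{H}_0}-P_{A,\Psi}$ and this reduces to $VU-VP_{A,\Psi}U$. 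Collecting the surviving pieces yields exactly $S_{A,\Psi}^{-1}+VU-V\theta_AS_{A,\Psi}^{-1}\theta_\Psi^*U$, so invertibility of this operator is equivalent to invertibility of $\theta_\Phi^*\theta_B=S_{B,\Phi}$, which completes the characterization. I expect the bookkeeping in this last expansion to be the only delicate part; everything else is direct substitution.
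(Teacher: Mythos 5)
Your proposal is correct and follows essentially the same route as the paper: combine Lemma \ref{FIRSTLEMMA} (duals correspond to a bounded right-inverse of $\theta_\Psi^*$ and a bounded left-inverse of $\theta_A$ with invertible product) with the parametrization of Lemma \ref{SECONDLEMMA}, apply $L_n^*$ to extract $B_n$ and $\Phi_n$, and expand the product to obtain $S_{A,\Psi}^{-1}+VU-V\theta_AS_{A,\Psi}^{-1}\theta_\Psi^*U$. Your expansion, including the vanishing of the cross-terms and the idempotency of $P_{A,\Psi}$, checks out and matches the computation the paper summarizes with ``we expand and get.''
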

\begin{proof}
	Lemmas \ref{FIRSTLEMMA} and  \ref{SECONDLEMMA}
	 give the characterization of dual weak OVF as 
	\begin{align*}
	&B_n=L_n^*(\theta_AS_{A,\Psi}^{-1}+(I_{\ell^2(\mathbb{N})\otimes\mathcal{H}_0}-\theta_AS_{A,\Psi}^{-1}\theta_\Psi^*)U)=A_nS_{A,\Psi}^{-1}+L_n^*U-A_nS_{A,\Psi}^{-1}\theta_\Psi^*U,\\
	&\Phi_n=L_n^*(\theta_\Psi(S_{A,\Psi}^{-1})^*+(I_{\ell^2(\mathbb{N})\otimes\mathcal{H}_0}-\theta_\Psi (S_{A,\Psi}^{-1})^*\theta_A^*)V^*)=\Psi_n(S_{A,\Psi}^{-1})^*+L_n^*V^*-\Psi_n (S_{A,\Psi}^{-1})^*\theta_A^*V^*, \quad \forall n \in \mathbb{N}
	\end{align*}
	such that the operator 
	$$(S_{A,\Psi}^{-1}\theta_\Psi^*+V(I_{\ell^2(\mathbb{N})\otimes\mathcal{H}_0}-\theta_A S_{A,\Psi}^{-1}\theta_\Psi^*))(\theta_AS_{A,\Psi}^{-1}+(I_{\ell^2(\mathbb{N})\otimes\mathcal{H}_0}-\theta_AS_{A,\Psi}^{-1}\theta_\Psi^*)U) $$
	is bounded invertible, where $U :\mathcal{H} \to \ell^2(\mathbb{N})\otimes\mathcal{H}_0$ and $V:\ell^2(\mathbb{N})\otimes\mathcal{H}_0\to\mathcal{H}$ are bounded linear operators. We expand and  get 
	\begin{align*}
	&(S_{A,\Psi}^{-1}\theta_\Psi^*+V(I_{\ell^2(\mathbb{N})\otimes\mathcal{H}_0}-\theta_A S_{A,\Psi}^{-1}\theta_\Psi^*))(\theta_AS_{A,\Psi}^{-1}+(I_{\ell^2(\mathbb{N})\otimes\mathcal{H}_0}-\theta_AS_{A,\Psi}^{-1}\theta_\Psi^*)U)\\
	&=S_{A, \Psi}^{-1}+VU-V\theta_AS_{A, \Psi}^{-1}\theta_\Psi^* U.
	\end{align*}
\end{proof}
Like similarity, there is another notion called as orthogonality for frames in Hilbert spaces. This was introduced by Balan \cite{BALANTHESIS} and later studied by Han and Larson \cite{HANLARSON} (see \cite{KORNELSON}). We now define the orthogonality for weak OVFs.
\begin{definition}
	A weak  OVF  $ (\{B_n\}_{n} , \{\Phi_n\}_{n} )$  in $\mathcal{B}(\mathcal{H}, \mathcal{H}_0)$ is said to be orthogonal to a weak  OVF  $  ( \{A_n\}_{n},  \{\Psi_n\}_{n} ) $ in $\mathcal{B}(\mathcal{H}, \mathcal{H}_0)$ if 
	\begin{align*}
	 \sum_{n=1}^\infty\Psi_n^*B_n= \sum_{n=1}^\infty\Phi^*_nA_n=0.
	\end{align*}
\end{definition}  
Remarkable property of orthogonal frames is that we can interpolate as well as  we can take direct sum of them to get new frames. These are illustrated in the following two results.
\begin{proposition}
	Let $  ( \{A_n\}_{n},  \{\Psi_n\}_{n} ) $ and $ (\{B_n\}_{n} , \{\Phi_n\}_{n} )$ be  two Parseval  OVFs in   $\mathcal{B}(\mathcal{H}, \mathcal{H}_0)$ which are  orthogonal. If $C,D,E,F \in \mathcal{B}(\mathcal{H})$ are such that $ C^*E+D^*F=I_\mathcal{H}$, then  
	\begin{align*}
	 (\{A_nC+B_nD\}_{n}, \{\Psi_nE+\Phi_nF\}_{n})
	\end{align*}
	 is a  Parseval weak  OVF in  $\mathcal{B}(\mathcal{H}, \mathcal{H}_0)$. In particular,  if scalars $ c,d,e,f$ satisfy $\bar{c}e+\bar{d}f =1$, then $ (\{cA_n+dB_n\}_{n}, \{e\Psi_n+f\Phi_n\}_{n}) $ is   a Parseval weak  OVF.
\end{proposition}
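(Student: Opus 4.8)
The plan is to verify directly that the frame operator of the candidate pair is the identity, since by definition being a Parseval weak OVF means exactly that $\sum_{n=1}^\infty(\Psi_nE+\Phi_nF)^*(A_nC+B_nD)=I_\mathcal{H}$, and equality with $I_\mathcal{H}$ simultaneously supplies the required convergence to a bounded invertible operator. First I would expand the general term using $(\Psi_nE+\Phi_nF)^*=E^*\Psi_n^*+F^*\Phi_n^*$, splitting the product $(E^*\Psi_n^*+F^*\Phi_n^*)(A_nC+B_nD)$ into four families of summands.

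Next I would use that each of the four resulting series converges in the strong-operator topology and that left and right multiplication by the fixed operators $C,D,E,F$ is continuous for this topology, so these operators may be pulled outside the respective sums. This reduces the expression to $E^*\big(\sum_n\Psi_n^*A_n\big)C+E^*\big(\sum_n\Psi_n^*B_n\big)D+F^*\big(\sum_n\Phi_n^*A_n\big)C+F^*\big(\sum_n\Phi_n^*B_n\big)D$. The two Parseval hypotheses give $\sum_n\Psi_n^*A_n=I_\mathcal{H}$ and $\sum_n\Phi_n^*B_n=I_\mathcal{H}$, while orthogonality gives $\sum_n\Psi_n^*B_n=0$ and $\sum_n\Phi_n^*A_n=0$; substituting these collapses the sum to $E^*C+F^*D$. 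Taking adjoints in the hypothesis $C^*E+D^*F=I_\mathcal{H}$ yields $E^*C+F^*D=(C^*E+D^*F)^*=I_\mathcal{H}$, so the frame operator equals $I_\mathcal{H}$ and the pair is a Parseval weak OVF.

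The scalar assertion then follows by specialization: taking $C=cI_\mathcal{H}$, $D=dI_\mathcal{H}$, $E=eI_\mathcal{H}$, $F=fI_\mathcal{H}$ turns $C^*E+D^*F=I_\mathcal{H}$ into $\bar ce+\bar df=1$ and converts the combined collections into $\{cA_n+dB_n\}_n$ and $\{e\Psi_n+f\Phi_n\}_n$. The only delicate point — and the sole obstacle worth flagging — is the interchange of the infinite strong-operator-topology sum with multiplication by the fixed operators, which I expect to be routine once the convergence of the four component series is recorded.
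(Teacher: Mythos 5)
Your proposal is correct and follows essentially the same route as the paper: expand the frame operator into the four series, pull $C,D,E,F$ out of the strong-operator-topology sums, and apply the two Parseval identities and the two orthogonality identities to reduce everything to $E^*C+F^*D$. Your explicit remark that $E^*C+F^*D=(C^*E+D^*F)^*=I_\mathcal{H}$ makes precise a step the paper leaves implicit, and your note about the continuity of left and right multiplication in the strong-operator topology is the correct (and routine) justification for the interchange.
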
 
\begin{proof}
	We use the definition of frame operator and get 
	\begin{align*}
	S_{AC+BD,\Psi E+\Phi F} &=\sum_{n=1}^\infty(\Psi_nE+\Phi_nF)^*(A_nC+B_nD)\\
	&=E^*S_{A,\Psi}C+E^*\left(\sum_{n=1}^\infty\Psi_n^*B_n\right)D+F^*\left(\sum_{n=1}^\infty\Phi_n^*A_n\right)C+F^*S_{B,\Phi}D\\
	&=E^*I_\mathcal{H}C+E^*0D+F^*0C+F^*I_\mathcal{H}D=I_\mathcal{H}.
	 \end{align*}
\end{proof}

\begin{proposition}
	If $  ( \{A_n\}_{n},  \{\Psi_n\}_{n} ) $  and $ (\{B_n\}_{n} , \{\Phi_n\}_{n} )$ are   orthogonal weak OVFs in $ \mathcal{B}(\mathcal{H}, \mathcal{H}_0)$, then  $(\{A_n\oplus B_n\}_{n},\{\Psi_n\oplus \Phi_n\}_{n})$ is a  weak  OVF in $ \mathcal{B}(\mathcal{H}\oplus \mathcal{H}, \mathcal{H}_0).$    Further, if both $  ( \{A_n\}_{n},  \{\Psi_n\}_{n} ) $  and $ (\{B_n\}_{n} , \{\Phi_n\}_{n} )$ are  Parseval, then $(\{A_n\oplus B_n\}_{n},\{\Psi_n\oplus \Phi_n\}_{n})$ is Parseval.
\end{proposition}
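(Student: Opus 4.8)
The plan is to compute the frame operator of the proposed direct-sum pair directly and read off its structure from the orthogonality hypothesis. First I would fix the convention that $A_n \oplus B_n \colon \mathcal{H} \oplus \mathcal{H} \to \mathcal{H}_0$ acts by $(A_n \oplus B_n)(h \oplus k) = A_n h + B_n k$, and similarly for $\Psi_n \oplus \Phi_n$; this is forced by the fact that the stated codomain is $\mathcal{H}_0$ (not $\mathcal{H}_0 \oplus \mathcal{H}_0$) and it matches the usual direct-sum (super-frame) construction. A one-line adjoint computation then gives $(\Psi_n \oplus \Phi_n)^* y = \Psi_n^* y \oplus \Phi_n^* y$ for every $y \in \mathcal{H}_0$.

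Next I would expand a single term $(\Psi_n \oplus \Phi_n)^*(A_n \oplus B_n)$ evaluated at $h \oplus k$, obtaining $(\Psi_n^* A_n h + \Psi_n^* B_n k) \oplus (\Phi_n^* A_n h + \Phi_n^* B_n k)$. Summing over $n$ and using that $\sum_{n} \Psi_n^* A_n = S_{A,\Psi}$ and $\sum_{n} \Phi_n^* B_n = S_{B,\Phi}$ converge in the strong-operator topology (as both pairs are weak OVFs), together with the orthogonality relations $\sum_{n} \Psi_n^* B_n = 0$ and $\sum_{n} \Phi_n^* A_n = 0$, I conclude that all four series converge, so the partial sums of the direct-sum series converge in $\mathcal{H} \oplus \mathcal{H}$ to $S_{A,\Psi} h \oplus S_{B,\Phi} k$. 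Hence the frame operator of the direct-sum pair exists in the strong-operator topology and equals the block-diagonal operator $S_{A,\Psi} \oplus S_{B,\Phi}$.

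Since $S_{A,\Psi}$ and $S_{B,\Phi}$ are bounded invertible, their direct sum is bounded invertible with inverse $S_{A,\Psi}^{-1} \oplus S_{B,\Phi}^{-1}$, which shows that $(\{A_n \oplus B_n\}_n, \{\Psi_n \oplus \Phi_n\}_n)$ is a weak OVF in $\mathcal{B}(\mathcal{H} \oplus \mathcal{H}, \mathcal{H}_0)$. For the second assertion, if both original pairs are Parseval then $S_{A,\Psi} = S_{B,\Phi} = I_\mathcal{H}$, so the frame operator of the direct sum is $I_\mathcal{H} \oplus I_\mathcal{H} = I_{\mathcal{H} \oplus \mathcal{H}}$, giving a Parseval weak OVF.

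I do not expect a serious obstacle; the heart of the argument is the cancellation of the two cross terms by orthogonality, mirroring the cancellation used in the preceding interpolation proposition. The only points that need care are fixing the convention for the direct-sum operators together with their adjoints, and noting that convergence in each summand of $\mathcal{H} \oplus \mathcal{H}$ is equivalent to convergence in the product, so that strong-operator convergence of the direct-sum frame operator follows from that of its two diagonal blocks.
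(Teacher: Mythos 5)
Your proposal is correct and follows essentially the same route as the paper: both expand $(\Psi_n\oplus\Phi_n)^*(A_n\oplus B_n)(h\oplus g)$ componentwise, use orthogonality to cancel the cross terms $\sum_n\Psi_n^*B_n$ and $\sum_n\Phi_n^*A_n$, and identify the frame operator as $S_{A,\Psi}\oplus S_{B,\Phi}$. Your added remarks on the adjoint convention and on componentwise convergence in $\mathcal{H}\oplus\mathcal{H}$ only make explicit what the paper leaves implicit.
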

\begin{proof}
	Let $ h \oplus g \in \mathcal{H}\oplus \mathcal{H}$. Then 
	\begin{align*}
	S_{A\oplus B, \Psi\oplus \Phi}(h\oplus g)&=\sum_{n=1}^\infty(\Psi_n\oplus \Phi_n)^*(A_n\oplus B_n)(h\oplus g)=\sum_{n=1}^\infty(\Psi_n\oplus \Phi_n)^*(A_nh+ B_ng)\\
	&=\sum_{n=1}^\infty(\Psi_n^*(A_nh+B_ng)\oplus \Phi_n^*(A_nh+B_ng))
	\\
	&=\left(\sum_{n=1}^\infty\Psi_n^*A_nh+\sum_{n=1}^\infty\Psi_n^*B_ng\right)\oplus \left(\sum_{n=1}^\infty\Phi_n^*A_nh+\sum_{n=1}^\infty\Phi_n^*B_ng\right)\\
	&=(S_{A,\Psi}h+0)\oplus(0+S_{B,\Phi}g) =(S_{A,\Psi}\oplus S_{B,\Phi})(h\oplus g).
	\end{align*}	
\end{proof}  
\section{Equivalence}\label{SIMILARITYCOMPOSITIONANDTENSORPRODUCT}
Balan introduced the notion of similarity or equivalence for frames for Hilbert spaces \cite{RADU}. In \cite{KAFTAL}, this notion was defined for operator-valued frames. We now define a similar notion for factorable weak OVFs.
\begin{definition}
	A factorable weak OVF   $( \{B_n\}_{n},  \{\Phi_n\}_{n} ) $  in $ \mathcal{B}(\mathcal{H}, \mathcal{H}_0)$    is said to be similar  to a factorable  weak OVF  $  ( \{A_n\}_{n},  \{\Psi_n\}_{n} ) $ in $ \mathcal{B}(\mathcal{H}, \mathcal{H}_0)$  if there exist bounded  invertible  $ R_{A,B}, R_{\Psi, \Phi} \in \mathcal{B}(\mathcal{H})$   such that 
	\begin{align*}
	B_n=A_nR_{A,B} , \quad \Phi_n=\Psi_nR_{\Psi, \Phi}, \quad \forall n \in \mathbb{N}.
	\end{align*} 
\end{definition}
Since  $ R_{A,B}, R_{\Psi, \Phi}$ are bounded invertible, it easily follows that the notion similarity is symmetric. We further have that the relation ``similarity" is an equivalence relation on the set 
\begin{align*}
\{( \{A_n\}_{n},  \{\Psi_n\}_{n} ):( \{A_n\}_{n},  \{\Psi_n\}_{n} ) \text{  is a factorable weak OVF}\}.
\end{align*} 
Similar frames have nice property that knowing analysis, synthesis and frame operators of one give that of another.
\begin{lemma}\label{SIM}
	Let $  ( \{A_n\}_{n},  \{\Psi_n\}_{n} ) $ and  $  ( \{B_n\}_{n},  \{\Phi_n\}_{n} ) $ be similar factorable weak OVFs  and   $B_n=A_nR_{A,B} ,\Phi_n=\Psi_nR_{\Psi, \Phi},  \forall n \in \mathbb{N}$, for some invertible $ R_{A,B} ,R_{\Psi, \Phi} \in \mathcal{B}(\mathcal{H}).$ Then 
	\begin{enumerate}[\upshape(i)]
		\item $ \theta_B=\theta_A R_{A,B}, \theta_\Phi=\theta_\Psi R_{\Psi,\Phi}$.
		\item $S_{B,\Phi}=R_{\Psi,\Phi}^*S_{A, \Psi}R_{A,B}$.
		\item $P_{B,\Phi}=P_{A, \Psi}.$
	\end{enumerate}
\end{lemma}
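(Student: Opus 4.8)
The plan is to derive all three identities directly from the defining formulas for the analysis, frame, and projection operators, substituting the similarity relations $B_n = A_n R_{A,B}$ and $\Phi_n = \Psi_n R_{\Psi,\Phi}$ and then invoking the factorization $S_{A,\Psi} = \theta_\Psi^* \theta_A$ established in the earlier factorization theorem. No delicate convergence issue arises, because the factorability of $(\{A_n\}_n, \{\Psi_n\}_n)$ already guarantees that $\theta_A, \theta_\Psi$ are well-defined bounded operators, and composing a bounded operator with the bounded invertible $R_{A,B}$ or $R_{\Psi,\Phi}$ preserves this.

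First I would prove (i). For each $h \in \mathcal{H}$, the definition of the analysis operator gives $\theta_B h = \sum_{n=1}^\infty L_n B_n h = \sum_{n=1}^\infty L_n A_n (R_{A,B} h) = \theta_A(R_{A,B} h)$, where the middle equality uses $B_n = A_n R_{A,B}$ and pulls the bounded operator $R_{A,B}$ out of the (strong-operator convergent) sum. Hence $\theta_B = \theta_A R_{A,B}$, and in particular $\theta_B$ is bounded, so $(\{B_n\}_n, \{\Phi_n\}_n)$ has a well-defined analysis operator. The identity $\theta_\Phi = \theta_\Psi R_{\Psi,\Phi}$ follows by the identical argument applied to $\Phi_n = \Psi_n R_{\Psi,\Phi}$.

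Next I would prove (ii) by combining (i) with the factorization $S_{B,\Phi} = \theta_\Phi^* \theta_B$. Taking adjoints in (i) yields $\theta_\Phi^* = R_{\Psi,\Phi}^* \theta_\Psi^*$, and substituting both expressions gives $S_{B,\Phi} = \theta_\Phi^* \theta_B = R_{\Psi,\Phi}^* \theta_\Psi^* \theta_A R_{A,B} = R_{\Psi,\Phi}^* S_{A,\Psi} R_{A,B}$, using $\theta_\Psi^* \theta_A = S_{A,\Psi}$. Note this also records that $S_{B,\Phi}$ is bounded invertible, being a product of three invertibles, confirming that $(\{B_n\}_n, \{\Phi_n\}_n)$ is indeed a weak OVF.

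Finally, for (iii) I would expand $P_{B,\Phi} = \theta_B S_{B,\Phi}^{-1} \theta_\Phi^*$ using (i) and (ii). Since $S_{B,\Phi}^{-1} = (R_{\Psi,\Phi}^* S_{A,\Psi} R_{A,B})^{-1} = R_{A,B}^{-1} S_{A,\Psi}^{-1} (R_{\Psi,\Phi}^*)^{-1}$, substitution gives
\[
P_{B,\Phi} = \theta_A R_{A,B}\, R_{A,B}^{-1} S_{A,\Psi}^{-1} (R_{\Psi,\Phi}^*)^{-1}\, R_{\Psi,\Phi}^* \theta_\Psi^* = \theta_A S_{A,\Psi}^{-1} \theta_\Psi^* = P_{A,\Psi},
\]
where the adjacent invertible factors cancel. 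There is no genuine obstacle here: the only point requiring care is the bookkeeping with adjoints and inverses, in particular keeping $R_{\Psi,\Phi}^*$ (rather than $R_{\Psi,\Phi}$) attached to the $\Psi$-side throughout, which is exactly what makes the cancellation in (iii) clean.
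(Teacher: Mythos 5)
Your proposal is correct and follows essentially the same route as the paper: derive (i) by pulling the invertible operators out of the series defining the analysis operators, obtain (ii) from the factorization $S_{B,\Phi}=\theta_\Phi^*\theta_B$ (the paper computes the series $\sum_n\Phi_n^*B_n$ directly, which is the same calculation), and get (iii) by substituting (i) and (ii) into $P_{B,\Phi}=\theta_BS_{B,\Phi}^{-1}\theta_\Phi^*$ and cancelling. No gaps.
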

\begin{proof}
$ \theta_B=\sum_{n=1}^\infty L_nB_n=\sum_{n=1}^\infty L_nA_nR_{A,B}=\theta_AR_{A,B} $. Similarly $ \theta_\Phi=\theta_\Psi R_{\Psi,\Phi}$. Now using operators $\theta_B$ and $\theta_\Phi$ we get $S_{B,\Phi}=\sum_{n=1}^\infty \Phi_n^*B_n=\sum_{n=1}^\infty(\Psi_nR_{\Psi,\Phi})^*(A_nR_{A,B})=R_{\Psi, \Phi}^*\left (\sum_{n=1}^\infty\Psi_n^*A_n\right )R_{A,B}=R_{\Psi,\Phi}^*S_{A, \Psi}R_{A,B}$. We now use (i) and (ii) to get 
\begin{align*}
P_{B,\Phi}=\theta_BS_{B,\Phi}^{-1}\theta_\Phi^*=(\theta_AR_{A,B})(R_{\Psi,\Phi}^*S_{A, \Psi}R_{A,B})^{-1}(\theta_\Psi R_{\Psi,\Phi})^*=P_{A,\Psi}.
\end{align*}
\end{proof}
We now classify similarity using operators.
\begin{theorem}\label{RIGHTSIMILARITY}
For two factorable weak OVFs  $  ( \{A_n\}_{n},  \{\Psi_n\}_{n} ) $ and  $  ( \{B_n\}_{n},  \{\Phi_n\}_{n} ) $, the following are equivalent.
	\begin{enumerate}[\upshape(i)]
		\item $B_n=A_nR_{A,B} , \Phi_n=\Psi_nR_{\Psi, \Phi} ,  \forall n \in \mathbb{N},$ for some invertible  $ R_{A,B} ,R_{\Psi, \Phi} \in \mathcal{B}(\mathcal{H}). $
		\item $\theta_B=\theta_AR_{A,B} , \theta_\Phi=\theta_\Psi R_{\Psi, \Phi} $ for some invertible  $ R_{A,B} ,R_{\Psi, \Phi} \in \mathcal{B}(\mathcal{H}). $
		\item $P_{B,\Phi}=P_{A,\Psi}.$
	\end{enumerate}
	If one of the above conditions is satisfied, then  invertible operators in  $ \operatorname{(i)}$ and  $ \operatorname{(ii)}$ are unique and are given by $R_{A,B}=S_{A,\Psi}^{-1}\theta_\Psi^*\theta_B$,   $R_{\Psi, \Phi}=(S_{A,\Psi}^{-1})^*\theta_A^*\theta_\Phi.$ In the case that $  ( \{A_n\}_{n},  \{\Psi_n\}_{n} ) $ is Parseval, then $  ( \{B_n\}_{n},  \{\Phi_n\}_{n} ) $ is  Parseval if and only if $R_{\Psi, \Phi}^*R_{A,B}=I_\mathcal{H} $   if and only if $R_{A,B}R_{\Psi, \Phi}^*=I_\mathcal{H} $.
\end{theorem}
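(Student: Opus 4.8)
The plan is to prove the cycle $(\mathrm{i}) \Rightarrow (\mathrm{ii}) \Rightarrow (\mathrm{iii}) \Rightarrow (\mathrm{i})$ and then settle uniqueness and the Parseval addendum separately. The implication $(\mathrm{i}) \Rightarrow (\mathrm{ii})$ is immediate from Lemma \ref{SIM}(i), which already records $\theta_B = \theta_A R_{A,B}$ and $\theta_\Phi = \theta_\Psi R_{\Psi,\Phi}$. For $(\mathrm{ii}) \Rightarrow (\mathrm{iii})$ I would substitute the two factorizations into $S_{B,\Phi} = \theta_\Phi^*\theta_B$ to obtain $S_{B,\Phi} = R_{\Psi,\Phi}^* S_{A,\Psi} R_{A,B}$, and then expand $P_{B,\Phi} = \theta_B S_{B,\Phi}^{-1}\theta_\Phi^*$; the invertible factors $R_{A,B}$ and $R_{\Psi,\Phi}^*$ cancel against $S_{B,\Phi}^{-1}$, leaving $\theta_A S_{A,\Psi}^{-1}\theta_\Psi^* = P_{A,\Psi}$. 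This is exactly the computation in Lemma \ref{SIM}(iii), so it can be cited.

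The substantive direction is $(\mathrm{iii}) \Rightarrow (\mathrm{i})$. Here I would take the candidate operators dictated by the uniqueness formula, namely $R_{A,B} := S_{A,\Psi}^{-1}\theta_\Psi^*\theta_B$ and $R_{\Psi,\Phi} := (S_{A,\Psi}^{-1})^*\theta_A^*\theta_\Phi$, both visibly bounded. Using $A_n = L_n^*\theta_A$ and $\Psi_n = L_n^*\theta_\Psi$ (Theorem \ref{THAFSCHAR} with $U = \theta_A$, $V = \theta_\Psi$), the key observation is that $A_n R_{A,B} = L_n^*\theta_A S_{A,\Psi}^{-1}\theta_\Psi^*\theta_B = L_n^* P_{A,\Psi}\theta_B$; invoking the hypothesis $P_{A,\Psi} = P_{B,\Phi}$ together with $P_{B,\Phi}\theta_B = \theta_B S_{B,\Phi}^{-1}\theta_\Phi^*\theta_B = \theta_B$ collapses this to $L_n^*\theta_B = B_n$. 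The companion equality $\Psi_n R_{\Psi,\Phi} = \Phi_n$ runs the same way after taking adjoints, using $P_{A,\Psi}^* = P_{B,\Phi}^*$ and $P_{B,\Phi}^*\theta_\Phi = \theta_\Phi$, the latter because $\theta_B^*\theta_\Phi = S_{B,\Phi}^*$.

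The main obstacle is showing that $R_{A,B}$ and $R_{\Psi,\Phi}$ are invertible, since the defining formulas supply no inverse for free. I would exploit the symmetry of the hypothesis $P_{A,\Psi} = P_{B,\Phi}$: interchanging the roles of the two frames produces $R_{B,A} := S_{B,\Phi}^{-1}\theta_\Phi^*\theta_A$ with $B_n R_{B,A} = A_n$ for all $n$. Then $A_n(R_{A,B} R_{B,A} - I_\mathcal{H}) = 0$ for every $n$, whence $\theta_A(R_{A,B} R_{B,A} - I_\mathcal{H}) = 0$; since $\theta_A$ is injective (the analysis operator of a factorable weak OVF is injective), $R_{A,B} R_{B,A} = I_\mathcal{H}$, and the symmetric computation using injectivity of $\theta_B$ yields $R_{B,A} R_{A,B} = I_\mathcal{H}$. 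The identical argument with $\theta_\Psi, \theta_\Phi$, both injective because $S_{A,\Psi}^* = \theta_A^*\theta_\Psi$ and $S_{B,\Phi}^* = \theta_B^*\theta_\Phi$ are invertible, establishes invertibility of $R_{\Psi,\Phi}$.

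Finally, uniqueness follows by feeding any operators satisfying $(\mathrm{i})$ back through $\theta_\Psi^*\theta_B = S_{A,\Psi} R_{A,B}$ and $\theta_A^*\theta_\Phi = S_{A,\Psi}^* R_{\Psi,\Phi}$ and solving for $R_{A,B}$ and $R_{\Psi,\Phi}$. For the Parseval addendum, when $S_{A,\Psi} = I_\mathcal{H}$ Lemma \ref{SIM}(ii) gives $S_{B,\Phi} = R_{\Psi,\Phi}^* R_{A,B}$, so $(\{B_n\}_n,\{\Phi_n\}_n)$ is Parseval exactly when $R_{\Psi,\Phi}^* R_{A,B} = I_\mathcal{H}$; and since both factors are invertible operators on $\mathcal{H}$, a one-sided inverse is automatically two-sided, giving the equivalence with $R_{A,B} R_{\Psi,\Phi}^* = I_\mathcal{H}$.
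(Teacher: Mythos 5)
Your proposal is correct and follows essentially the same route as the paper: the implications (i)$\Rightarrow$(ii)$\Rightarrow$(iii) are delegated to Lemma \ref{SIM}, and for (iii)$\Rightarrow$(i) you build the same candidate operators $S_{A,\Psi}^{-1}\theta_\Psi^*\theta_B$ and $(S_{A,\Psi}^{-1})^*\theta_A^*\theta_\Phi$ and use $P_{B,\Phi}\theta_B=\theta_B$, $P_{B,\Phi}^*\theta_\Phi=\theta_\Phi$ together with the hypothesis $P_{A,\Psi}=P_{B,\Phi}$, exactly as the paper does. The only (harmless) variation is in verifying invertibility: the paper computes the four products with the symmetric candidates $S_{B,\Phi}^{-1}\theta_\Phi^*\theta_A$ and $(S_{B,\Phi}^{-1})^*\theta_B^*\theta_\Psi$ directly, whereas you reach the same conclusion by cancelling against the injective analysis operators $\theta_A,\theta_B,\theta_\Psi,\theta_\Phi$.
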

\begin{proof}
	The implications (i) $\Rightarrow$ (ii) $\Rightarrow$ (iii) follow from Lemma \ref{SIM}.  Assume (ii) holds. We show (i) holds. Using Equation (\ref{LEQUATION}), $ B_n=L_n^*\theta_B=L_n^*\theta_AR_{A,B}'=A_nR_{A,B}'$; the same procedure gives $ \Phi_n$ also.   Assume (iii). We note the following $ \theta_B=P_{B,\Phi}\theta_B$ and $ \theta_\Phi=P_{B,\Phi}^*\theta_\Phi.$ Using these, $ \theta_B=P_{A,\Psi}\theta_B=\theta_A(S_{A,\Psi}^{-1}\theta_\Psi^*\theta_B)$ and $ \theta_\Phi=P_{A,\Psi}^*\theta_\Phi=(\theta_AS_{A,\Psi}^{-1}\theta_\Psi^*)^*\theta_\Phi=\theta_\Psi((S_{A,\Psi}^{-1})^*\theta_A^*\theta_\Phi).$ We now try to show that both $S_{A,\Psi}^{-1}\theta_\Psi^*\theta_B$  and $(S_{A,\Psi}^{-1})^*\theta_A^*\theta_\Phi$ are invertible. This is achieved via, 
	\begin{align*}
	(S_{A,\Psi}^{-1}\theta_\Psi^*\theta_B)(S_{B,\Phi}^{-1}\theta_\Phi^*\theta_A)=S_{A,\Psi}^{-1}\theta_\Psi^*P_{B,\Phi}\theta_A= S_{A,\Psi}^{-1}\theta_\Psi^*P_{A,\Psi}\theta_A= S_{A,\Psi}^{-1}\theta_\Psi^*\theta_A=I_\mathcal{H},\\
	( S_{B,\Phi}^{-1}\theta_\Phi^*\theta_A)(S_{A,\Psi}^{-1}\theta_\Psi^*\theta_B)= S_{B,\Phi}^{-1}\theta_\Phi^*P_{A,\Psi}\theta_B=S_{B,\Phi}^{-1}\theta_\Phi^*P_{B,\Phi}\theta_B=S_{B,\Phi}^{-1}\theta_\Phi^*\theta_B=I_\mathcal{H}
	\end{align*}
	and 
	\begin{align*}
	((S_{A,\Psi}^{-1})^*\theta_A^*\theta_\Phi)((S_{B,\Phi}^{-1})^*\theta_B^*\theta_\Psi)=(S_{A,\Psi}^{-1})^*\theta_A^*P_{B,\Phi}^*\theta_\Psi=(S_{A,\Psi}^{-1})^*\theta_A^*P_{A,\Psi}^*\theta_\Psi=(S_{A,\Psi}^{-1})^*\theta_A^*\theta_\Psi=I_\mathcal{H},\\
	((S_{B,\Phi}^{-1})^*\theta_B^*\theta_\Psi)((S_{A,\Psi}^{-1})^*\theta_A^*\theta_\Phi)=(S_{B,\Phi}^{-1})^*\theta_B^*P_{A,\Psi}^* \theta_\Phi=(S_{B,\Phi}^{-1})^*\theta_B^*P_{B,\Phi}^* \theta_\Phi=(S_{B,\Phi}^{-1})^*\theta_B^* \theta_\Phi= I_\mathcal{H}.
	\end{align*} 
	
	Let $ R_{A,B}, R_{\Psi,\Phi} \in \mathcal{B}(\mathcal{H}) $ be invertible. From the previous arguments, $ R_{A,B}$ and $R_{\Psi,\Phi} $ satisfy (i) if and only if  they satisfy (ii). Let $B_n=A_nR_{A,B} , \Phi_n=\Psi_nR_{\Psi, \Phi} ,  \forall n \in \mathbb{N}.$ Using (ii), $\theta_B=\theta_AR_{A,B} , \theta_\Phi=\theta_\Psi R_{\Psi, \Phi}$  $\implies$  $\theta_\Psi^*\theta_B=\theta_\Psi^*\theta_AR_{A,B}=S_{A,\Psi}R_{A,B} , \theta_A^*\theta_\Phi=\theta_A^*\theta_\Psi R_{\Psi, \Phi}=S_{A,\Psi}^*R_{\Psi, \Phi}$. These imply the formula for $R_{A,B}$ and $ R_{\Psi, \Phi}.$ For the last,  we recall $ S_{B,\Phi}=R_{\Psi,\Phi}^*S_{A, \Psi}R_{A,B}$.  
\end{proof}
\begin{corollary}
	For any given factorable weak OVF $  ( \{A_n\}_{n},  \{\Psi_n\}_{n} ) $, the canonical dual of $  ( \{A_n\}_{n},  \{\Psi_n\}_{n} ) $ is the only dual factorable weak OVF  that is similar to $  ( \{A_n\}_{n},  \{\Psi_n\}_{n} ) $.
\end{corollary}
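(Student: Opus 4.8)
The plan is to split the statement into an existence part and a uniqueness part, with essentially all of the content living in the latter. For existence, I would recall that the canonical dual $(\{\widetilde{A}_n\}_{n},\{\widetilde{\Psi}_n\}_{n})$, where $\widetilde{A}_n=A_nS_{A,\Psi}^{-1}$ and $\widetilde{\Psi}_n=\Psi_n(S_{A,\Psi}^{-1})^*$, was already shown to be a dual weak OVF; moreover it is visibly similar to $(\{A_n\}_{n},\{\Psi_n\}_{n})$, being obtained by right-multiplication by the bounded invertible operators $R_{A,B}=S_{A,\Psi}^{-1}$ and $R_{\Psi,\Phi}=(S_{A,\Psi}^{-1})^*$. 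That it is in fact a \emph{factorable} weak OVF follows from Lemma \ref{SIM}(i), which yields $\theta_{\widetilde A}=\theta_AS_{A,\Psi}^{-1}$ and $\theta_{\widetilde\Psi}=\theta_\Psi(S_{A,\Psi}^{-1})^*$, both bounded. Hence the canonical dual is at least one dual factorable weak OVF similar to the original, and only uniqueness remains to be established.

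For uniqueness, I would take an arbitrary dual factorable weak OVF $(\{B_n\}_{n},\{\Phi_n\}_{n})$ that is similar to $(\{A_n\}_{n},\{\Psi_n\}_{n})$ and write $B_n=A_nR_{A,B}$, $\Phi_n=\Psi_nR_{\Psi,\Phi}$ for some bounded invertible $R_{A,B},R_{\Psi,\Phi}\in\mathcal{B}(\mathcal{H})$. The key observation is that the two duality identities then pin these similarity operators down uniquely. Pulling the bounded operator $R_{A,B}$ out of the strong-operator-convergent series, exactly as in the proof of Lemma \ref{SIM}(ii), gives
\[
I_{\mathcal{H}}=\sum_{n=1}^\infty \Psi_n^*B_n=\left(\sum_{n=1}^\infty \Psi_n^*A_n\right)R_{A,B}=S_{A,\Psi}R_{A,B},
\]
so $R_{A,B}=S_{A,\Psi}^{-1}$. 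Symmetrically, using $(\Psi_nR_{\Psi,\Phi})^*=R_{\Psi,\Phi}^*\Psi_n^*$,
\[
I_{\mathcal{H}}=\sum_{n=1}^\infty \Phi_n^*A_n=R_{\Psi,\Phi}^*\left(\sum_{n=1}^\infty \Psi_n^*A_n\right)=R_{\Psi,\Phi}^*S_{A,\Psi},
\]
which forces $R_{\Psi,\Phi}^*=S_{A,\Psi}^{-1}$, that is, $R_{\Psi,\Phi}=(S_{A,\Psi}^{-1})^*$.

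Substituting these values back, I would conclude $B_n=A_nS_{A,\Psi}^{-1}=\widetilde{A}_n$ and $\Phi_n=\Psi_n(S_{A,\Psi}^{-1})^*=\widetilde{\Psi}_n$ for every $n$, so the chosen dual coincides with the canonical dual, proving uniqueness. I do not anticipate a genuine obstacle: the argument is a short computation. The only point requiring mild care is the interchange of the bounded invertible operators $R_{A,B},R_{\Psi,\Phi}$ with series converging merely in the strong-operator topology, and this is legitimate precisely because these operators are bounded (hence continuous), which is the same manipulation already justified in Lemma \ref{SIM}.
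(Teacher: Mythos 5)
Your proof is correct and follows essentially the same route as the paper: both arguments pin down the similarity operators as $R_{A,B}=S_{A,\Psi}^{-1}$ and $R_{\Psi,\Phi}=(S_{A,\Psi}^{-1})^*$ by combining $\theta_\Psi^*\theta_B=S_{A,\Psi}R_{A,B}$ and $\theta_\Phi^*\theta_A=R_{\Psi,\Phi}^*S_{A,\Psi}$ with the duality identities. The only cosmetic difference is that the paper obtains these formulas by citing Theorem \ref{RIGHTSIMILARITY}, whereas you recompute them directly from the series; your added verification that the canonical dual is itself a similar factorable dual is a harmless (and slightly more complete) supplement.
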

\begin{proof}
	Let  $ (\{B_n\}_{n} , \{\Phi_n\}_{n} )$ be a factorable  weak OVF which is both dual and similar for  $  ( \{A_n\}_{n},  \{\Psi_n\}_{n} ) $.  Then  we have $ \theta_B^*\theta_\Psi=I_\mathcal{H}=\theta_\Phi^*\theta_A$ and  there exist invertible $ R_{A,B},R_{\Psi,\Phi}\in \mathcal{B}(\mathcal{H})$ such that  $B_n=A_nR_{A,B} , \Phi_n=\Psi_nR_{\Psi, \Phi} ,  \forall n \in \mathbb{N} $. Theorem \ref{RIGHTSIMILARITY} gives  $R_{A,B}=S_{A,\Psi}^{-1}\theta_\Psi^*\theta_B, R_{\Psi, \Phi}=S_{A,\Psi}^{-1}\theta_A^*\theta_\Phi.$ But then $R_{A,B}=S_{A,\Psi}^{-1}I_\mathcal{H}=S_{A,\Psi}^{-1}$,  $ R_{\Psi, \Phi}=(S_{A,\Psi}^{-1})^*I_\mathcal{H}=(S_{A,\Psi}^{-1})^*.$ Therefore  $ (\{B_n\}_{n} , \{\Phi_n\}_{n} )$ is the  canonical  dual for  $  ( \{A_n\}_{n},  \{\Psi_n\}_{n} ) $.
\end{proof}
\begin{corollary}
	Two similar factorable weak OVF   cannot be orthogonal.
\end{corollary}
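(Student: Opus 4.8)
The plan is to argue by contradiction. Suppose that $(\{A_n\}_n, \{\Psi_n\}_n)$ and $(\{B_n\}_n, \{\Phi_n\}_n)$ are simultaneously similar and orthogonal factorable weak OVFs, and show this forces the (invertible) frame operator $S_{A,\Psi}$ to be zero.

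First I would invoke similarity to fix bounded invertible operators $R_{A,B}, R_{\Psi,\Phi} \in \mathcal{B}(\mathcal{H})$ with $B_n = A_n R_{A,B}$ and $\Phi_n = \Psi_n R_{\Psi,\Phi}$ for all $n$. The key step is to evaluate one of the two orthogonality series. Substituting $B_n = A_n R_{A,B}$ and pulling the fixed operator $R_{A,B}$ out of the strong-operator-topology sum on the right gives
\begin{align*}
\sum_{n=1}^\infty \Psi_n^* B_n = \left(\sum_{n=1}^\infty \Psi_n^* A_n\right) R_{A,B} = S_{A,\Psi} R_{A,B}.
\end{align*}
Symmetrically, the other orthogonality series satisfies $\sum_{n=1}^\infty \Phi_n^* A_n = R_{\Psi,\Phi}^* S_{A,\Psi}$, so either defining condition of orthogonality yields the same conclusion.

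Since $S_{A,\Psi}$ is bounded invertible by the definition of a weak OVF and $R_{A,B}$ is invertible by similarity, the product $S_{A,\Psi} R_{A,B}$ is bounded invertible, hence in particular nonzero. This directly contradicts the orthogonality requirement $\sum_{n=1}^\infty \Psi_n^* B_n = 0$, which completes the argument.

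I do not expect any substantive obstacle here: the statement is essentially immediate once the orthogonality series is recognized as $S_{A,\Psi} R_{A,B}$, and the content is exactly that an invertible operator cannot vanish. The only point meriting a word of justification is the interchange allowing $R_{A,B}$ to be factored out of the strong-operator-topology series; this is legitimate because right multiplication by a fixed bounded operator is continuous in the strong-operator topology, so $\sum_{n} \Psi_n^* A_n R_{A,B} h$ converges to $S_{A,\Psi} R_{A,B} h$ for every $h \in \mathcal{H}$.
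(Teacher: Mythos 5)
Your proposal is correct and follows essentially the same route as the paper: both substitute $B_n = A_n R_{A,B}$ into the orthogonality series and observe that the result, $S_{A,\Psi}R_{A,B}$ (the paper works with its adjoint $R_{A,B}^*S_{A,\Psi}^*$ via the analysis operators), is a product of invertible operators and hence cannot vanish. Your remark justifying the factoring of $R_{A,B}$ out of the strong-operator-topology sum is sound and, if anything, slightly more careful than the paper's own presentation.
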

\begin{proof}
	Let a factorable weak OVF $  ( \{B_n\}_{n},  \{\Phi_n\}_{n} ) $ be similar to $  ( \{A_n\}_{n},  \{\Psi_n\}_{n} ) $. Choose invertible $ R_{A,B},R_{\Psi,\Phi}\in \mathcal{B}(\mathcal{H})$ such that  $B_n=A_nR_{A,B} , \Phi_n=\Psi_nR_{\Psi, \Phi} ,  \forall n \in \mathbb{N} $. Using 	Theorem \ref{RIGHTSIMILARITY} and the invertibility of $R_{A,B}^* $ and $S_{A,\Psi}^* $, we get 
	\begin{align*}
	\theta_B^*\theta_\Psi=(\theta_AR_{A,B})^*\theta_\Psi=R_{A,B}^*\theta_A^*\theta_\Psi=R_{A,B}^*S_{A,\Psi}^*\neq 0.
	\end{align*}
\end{proof}

	For every factorable weak OVF $  ( \{A_n\}_{n},  \{\Psi_n\}_{n} ) $, each  of `OVFs'  $( \{A_nS_{A, \Psi}^{-1}\}_{n}, \{\Psi_n\}_{n})$    and  $ (\{A_n \}_{n}, \{\Psi_n(S_{A,\Psi}^{-1})^*\}_{n})$ is a Parseval OVF which is similar to  $  ( \{A_n\}_{n},  \{\Psi_n\}_{n} ) $.  Thus every OVF is similar to  Parseval OVFs.

\section{Frames and discrete group representations} \label{FRAMESANDDISCRETEGROUPREPRESENTATIONS}
Let $ G$ be a discrete topological group, $ \{\chi_g\}_{g\in G}$ be the  standard orthonormal  basis for $\ell^2(G) $.  Let $\lambda $ be the left regular representation of $ G$ defined by $ \lambda_g\chi_q(r)=\chi_q(g^{-1}r), \forall  g, q, r \in G$;  $\rho $ be the right regular representation of $ G$ defined by $ \rho_g\chi_q(r)=\chi_q(rg), \forall g, q, r \in G.$ By $\mathscr{L}(G) $, we mean  the von Neumann algebra generated by unitaries $\{\lambda_g\}_{g\in G} $ in $ \mathcal{B}(\ell^2(G))$. Similarly $\mathscr{R}(G) $ denotes the von Neumann algebra generated by $\{\rho_g\}_{g\in G} $ in $ \mathcal{B}(\ell^2(G))$. We  recall that $\mathscr{L}(G)'=\mathscr{R}(G)$, $ \mathscr{R}(G)'=\mathscr{L}(G) $ \cite{CONWAY}.

\begin{definition}
	Let $ \pi$ be a unitary representation of a discrete 
	group $ G$ on  a Hilbert space $ \mathcal{H}.$ An operator $ A$ in $ \mathcal{B}(\mathcal{H}, \mathcal{H}_0)$ is called a factorable   operator frame generator (resp. a  Parseval frame generator) w.r.t. an operator $ \Psi$ in $ \mathcal{B}(\mathcal{H}, \mathcal{H}_0)$ if $(\{A_g\coloneqq A \pi_{g^{-1}}\}_{g\in G}, \{\Psi_g\coloneqq \Psi \pi_{g^{-1}}\}_{g\in G})$ is a factorable weak OVF (resp.  Parseval)  in $ \mathcal{B}(\mathcal{H}, \mathcal{H}_0)$. In this case, we write $ (A,\Psi)$ is an operator  frame generator for $\pi$.
\end{definition}

\begin{proposition}\label{REPRESENATIONLEMMA}
	Let $ (A,\Psi)$ and $ (B,\Phi)$ be   operator frame generators    in $\mathcal{B}(\mathcal{H},  \mathcal{H}_0)$ for a unitary representation $ \pi$ of  $G$ on $ \mathcal{H}.$ Then
	\begin{enumerate}[\upshape(i)]
		\item $ \theta_A\pi_g=(\lambda_g\otimes I_{\mathcal{H}_0})\theta_A,  \theta_\Psi \pi_g=(\lambda_g\otimes I_{\mathcal{H}_0})\theta_\Psi,  \forall g \in G.$
		\item $ \theta_A^*\theta_B,   \theta_\Psi^*\theta_\Phi,\theta_A^*\theta_\Phi$ are in the commutant $ \pi(G)'$ of $ \pi(G)''.$ Further, $ S_{A,\Psi} \in \pi(G)'$. 
		\item $ \theta_AT\theta_\Psi^*, \theta_AT\theta_B^*, \theta_\Psi T\theta_\Phi^* \in \mathscr{R}(G)\otimes \mathcal{B}(\mathcal{H}_0), \forall T \in \pi(G)'.$ In particular, $ P_{A, \Psi} \in \mathscr{R}(G)\otimes \mathcal{B}(\mathcal{H}_0). $
	\end{enumerate}
\end{proposition}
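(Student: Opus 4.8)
The plan is to reduce everything to a single \emph{covariance identity} for the analysis operators, namely part (i), and then obtain (ii) and (iii) by algebraic manipulation together with the commutation theorem for von Neumann algebras. Since the pair is a factorable weak OVF, $\theta_A,\theta_\Psi,\theta_B,\theta_\Phi$ are genuine bounded operators, so all the series manipulations below are with honest bounded operators and no convergence subtleties arise.

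For (i) I would compute $\theta_A\pi_g$ directly. Writing $\theta_A h=\sum_{q\in G}\chi_q\otimes A\pi_{q^{-1}}h$ and using $\pi_{q^{-1}}\pi_g=\pi_{q^{-1}g}$ gives $\theta_A\pi_g h=\sum_{q}\chi_q\otimes A\pi_{q^{-1}g}h$. On the other side, the key elementary fact is $\lambda_g\chi_q=\chi_{gq}$, immediate from $\lambda_g\chi_q(r)=\chi_q(g^{-1}r)$, so that $(\lambda_g\otimes I_{\mathcal{H}_0})\theta_A h=\sum_q\chi_{gq}\otimes A\pi_{q^{-1}}h$; re-indexing $p=gq$ (hence $q^{-1}=p^{-1}g$) matches the two expressions. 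The same computation gives the identity for $\theta_\Psi$.

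For (ii) I would take adjoints in (i) and use that $\pi_g,\lambda_g$ are unitary to get the dual covariance relation $\pi_g\theta_A^*=\theta_A^*(\lambda_g\otimes I_{\mathcal{H}_0})$, and likewise for $\Psi,B,\Phi$. Then for each product, e.g. $\theta_A^*\theta_B$, I would chain the two relations:
\[
\pi_g(\theta_A^*\theta_B)=\theta_A^*(\lambda_g\otimes I_{\mathcal{H}_0})\theta_B=\theta_A^*\theta_B\,\pi_g,\qquad\forall g\in G,
\]
so $\theta_A^*\theta_B$ commutes with every $\pi_g$. Since the commutant of the generating set $\{\pi_g\}$ equals the commutant $\pi(G)'=(\pi(G)'')'$ of the generated von Neumann algebra, this shows $\theta_A^*\theta_B\in\pi(G)'$; the same argument applies verbatim to $\theta_\Psi^*\theta_\Phi$ and $\theta_A^*\theta_\Phi$. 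Finally $S_{A,\Psi}=\theta_\Psi^*\theta_A$ is a special case, so $S_{A,\Psi}\in\pi(G)'$.

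For (iii) I would invoke the commutation theorem for tensor products, $(\mathscr{L}(G)\otimes\mathbb{C}I_{\mathcal{H}_0})'=\mathscr{L}(G)'\otimes\mathcal{B}(\mathcal{H}_0)=\mathscr{R}(G)\otimes\mathcal{B}(\mathcal{H}_0)$, which reduces the claim to showing that $\theta_A T\theta_\Psi^*$ commutes with every $\lambda_g\otimes I_{\mathcal{H}_0}$. Using (i), its adjoint form, and $T\in\pi(G)'$:
\[
(\lambda_g\otimes I_{\mathcal{H}_0})\theta_A T\theta_\Psi^*=\theta_A\pi_g T\theta_\Psi^*=\theta_A T\pi_g\theta_\Psi^*=\theta_A T\theta_\Psi^*(\lambda_g\otimes I_{\mathcal{H}_0}),
\]
and the identical computation handles $\theta_A T\theta_B^*$ and $\theta_\Psi T\theta_\Phi^*$. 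For the ``in particular'' claim, note that $S_{A,\Psi}\in\pi(G)'$ by (ii) forces $S_{A,\Psi}^{-1}\in\pi(G)'$, so taking $T=S_{A,\Psi}^{-1}$ gives $P_{A,\Psi}=\theta_A S_{A,\Psi}^{-1}\theta_\Psi^*\in\mathscr{R}(G)\otimes\mathcal{B}(\mathcal{H}_0)$. The main obstacle is precisely this last step's reliance on the tensor-product commutation theorem to identify $\mathscr{R}(G)\otimes\mathcal{B}(\mathcal{H}_0)$ with the commutant of $\mathscr{L}(G)\otimes\mathbb{C}I_{\mathcal{H}_0}$; once that is in hand, everything else is bookkeeping with the covariance identity from (i).
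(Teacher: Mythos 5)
Your proposal is correct and follows essentially the same route as the paper: both establish the covariance identity $\theta_A\pi_g=(\lambda_g\otimes I_{\mathcal{H}_0})\theta_A$ by the re-indexing $p=gq$ using $\lambda_g\chi_q=\chi_{gq}$, derive (ii) by passing to adjoints and chaining, and obtain (iii) from the commutation theorem $(\mathscr{L}(G)\otimes\mathbb{C}I_{\mathcal{H}_0})'=\mathscr{R}(G)\otimes\mathcal{B}(\mathcal{H}_0)$ with the choice $T=S_{A,\Psi}^{-1}$. Your explicit remark that $S_{A,\Psi}\in\pi(G)'$ forces $S_{A,\Psi}^{-1}\in\pi(G)'$ is a small detail the paper leaves implicit, but otherwise the two arguments coincide.
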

\begin{proof} Let $ g,p,q \in G $ and $ h \in \mathcal{H}_0.$
	\begin{enumerate}[\upshape(i)]
		\item  From the definition of $ \lambda_g $ and $ \chi_q$,  we get $ \lambda_g\chi_q=\chi_{gq}.$ Therefore $ L_{gq}h=\chi_{gq}\otimes h= \lambda_g\chi_q\otimes h= (\lambda_g\otimes I_{\mathcal{H}_0})(\chi_q\otimes h)=(\lambda_g\otimes I_{\mathcal{H}_0})L_qh.$  Using this, 
		\begin{align*}
			\theta_A\pi_g&=\sum\limits_{p\in G} L_pA_p\pi_g=\sum\limits_{p\in G} L_pA\pi_{p^{-1}}\pi_g=\sum\limits_{p\in G} L_pA\pi_{{p^{-1}}g}\\
			&=\sum\limits_{q\in G} L_{gq}A\pi_{q^{-1}}=\sum\limits_{q\in G}(\lambda_g\otimes I_{\mathcal{H}_0}) L_{q}A\pi_{q^{-1}}=(\lambda_g\otimes I_{\mathcal{H}_0})\theta_A. 
		\end{align*}
		Similarly $ \theta_\Psi \pi_g=(\lambda_g\otimes I_{\mathcal{H}_0})\theta_\Psi.$
		\item $ \theta_A^*\theta_B\pi_g=\theta_A^* (\lambda_g\otimes I_{\mathcal{H}_0})\theta_B=((\lambda_{g^{-1}}\otimes I_{\mathcal{H}_0})\theta_A)^*\theta_B=(\theta_A\pi_{g^{-1}})^*\theta_B=\pi_g\theta_A^*\theta_B.$ In the same way, $ \theta_\Psi^*\theta_\Phi, \theta_A^*\theta_\Phi\in \pi(G)'.$ By taking $ B=A$ and $ \Phi=\Psi$ we get  $ S_{A,\Psi} \in \pi(G)'.$ 
		\item Let  $ T \in \pi(G)'.$ Then 
		$$\theta_AT\theta_\Psi^*(\lambda_g\otimes I_{\mathcal{H}_0})= \theta_AT((\lambda_{g^{-1}}\otimes I_{\mathcal{H}_0})\theta_\Psi)^*=\theta_AT\pi_g\theta_\Psi^*=\theta_A\pi_gT\theta_\Psi^*=(\lambda_g\otimes I_{\mathcal{H}_0})\theta_AT\theta_\Psi^*.$$
		From the construction of $ \mathscr{L}(G),$ we now get $\theta_AT\theta_\Psi^* \in (\mathscr{L}(G)\otimes \{I_{\mathcal{H}_0}\})'=\mathscr{L}(G)'\otimes \{I_{\mathcal{H}_0}\}'=\mathscr{R}(G)\otimes \mathcal{B}(\mathcal{H}_0).$ Similarly $\theta_AT\theta_B^*, \theta_\Psi S\theta_\Phi^* \in \mathscr{R}(G)\otimes \mathcal{B}(\mathcal{H}_0), \forall  S \in \pi (G)'.$ For the choice  $ T=S_{A,\Psi}^{-1}$ we get $ P_{A, \Psi} \in \mathscr{R}(G)\otimes \mathcal{B}(\mathcal{H}_0). $
	\end{enumerate}
\end{proof}

\begin{theorem}\label{gc1}
	Let $ G$ be a discrete group,  $ e$ be the identity of $G$ and $( \{A_g\}_{g\in G},  \{\Psi_g\}_{g\in G})$ be a Parseval  factorable weak OVF  in $ \mathcal{B}(\mathcal{H},\mathcal{H}_0).$ Then there is a  unitary representation $ \pi$  of $ G$ on  $ \mathcal{H}$  for which 
	$$ A_g=A_e\pi_{g^{-1}}, ~\Psi_g=\Psi_e\pi_{g^{-1}}, ~\forall  g \in G$$
	if and only if 
	$$A_{gp}A_{gq}^*=A_pA_q^* ,~ A_{gp}\Psi_{gq}^*=A_p\Psi_q^*,~ \Psi_{gp}\Psi_{gq}^*=\Psi_p\Psi_q^*, \quad  \forall g,p,q \in G.$$
\end{theorem}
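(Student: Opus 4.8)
The plan is to prove the two implications separately, with the forward implication being a direct computation and the backward one requiring an explicit construction of $\pi$.

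For the forward direction, assume $A_g = A_e\pi_{g^{-1}}$ and $\Psi_g = \Psi_e\pi_{g^{-1}}$ for a unitary representation $\pi$. The only ingredient is the decomposition $(gp)^{-1} = p^{-1}g^{-1}$ together with unitarity. It gives $A_{gp} = A_e\pi_{p^{-1}}\pi_{g^{-1}}$ and, since $\pi_{g^{-1}}^* = \pi_g$, also $A_{gq}^* = \pi_g\pi_q A_e^*$, whence $A_{gp}A_{gq}^* = A_e\pi_{p^{-1}}\pi_{g^{-1}}\pi_g\pi_q A_e^* = A_e\pi_{p^{-1}}\pi_q A_e^* = A_pA_q^*$. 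Replacing the appropriate factors of $A$ by $\Psi$ yields the remaining two identities verbatim, so this direction is routine.

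For the backward direction, the central idea is to transport the unitary $W_g := \lambda_g\otimes I_{\mathcal{H}_0}$ on $\ell^2(G)\otimes\mathcal{H}_0$ over to $\mathcal{H}$ through the frame. First I would recast the three hypotheses as commutation relations. Using $A_p = L_p^*\theta_A$, $\Psi_p = L_p^*\theta_\Psi$, and the identity $L_{gp} = W_g L_p$ established in the proof of Proposition \ref{REPRESENATIONLEMMA}, each hypothesis takes the shape $L_p^*(W_g^* X W_g)L_q = L_p^* X L_q$ for all $p,q\in G$, with $X = \theta_A\theta_A^*$, $X = \theta_A\theta_\Psi^*$, and $X = \theta_\Psi\theta_\Psi^*$ respectively. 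Since $\sum_p L_pL_p^* = I$ in the strong-operator topology, the blocks $L_p^* X L_q$ determine $X$; hence the three hypotheses are precisely the statements that $W_g$ commutes with each of $\theta_A\theta_A^*$, with $P_{A,\Psi} = \theta_A\theta_\Psi^*$ (note $S_{A,\Psi}=I$ by the Parseval assumption), and with $\theta_\Psi\theta_\Psi^*$, for every $g\in G$.

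With this translation in hand I would define $\pi_g := \theta_\Psi^*W_g\theta_A$. Then $\pi_e = \theta_\Psi^*\theta_A = S_{A,\Psi} = I_{\mathcal{H}}$, and the homomorphism property follows from the commutation of $W_g$ with $P_{A,\Psi}$ together with $\theta_\Psi^*P_{A,\Psi} = S_{A,\Psi}\theta_\Psi^* = \theta_\Psi^*$, giving $\pi_g\pi_h = \theta_\Psi^*W_gP_{A,\Psi}W_h\theta_A = \theta_\Psi^*P_{A,\Psi}W_{gh}\theta_A = \pi_{gh}$. The generating identities then come out by one more manipulation: $A_e\pi_{g^{-1}} = L_e^*\theta_A\theta_\Psi^*W_{g^{-1}}\theta_A = L_e^*W_{g^{-1}}P_{A,\Psi}\theta_A = L_e^*W_{g^{-1}}\theta_A = L_g^*\theta_A = A_g$, using $P_{A,\Psi}\theta_A = \theta_A$ and $L_e^*W_{g^{-1}} = L_g^*$, and symmetrically $\Psi_e\pi_{g^{-1}} = \Psi_g$ via the commutation of $W_{g^{-1}}$ with $\theta_\Psi\theta_\Psi^*$ and $\theta_\Psi^*\theta_A = I$.

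The step I expect to be the main obstacle is establishing that $\pi_g$ is genuinely unitary rather than merely invertible. The homomorphism property already forces $\pi_g$ to be invertible with inverse $\pi_{g^{-1}}$, but $\pi_g^* = \theta_A^*W_{g^{-1}}\theta_\Psi$ is not visibly equal to $\pi_{g^{-1}}$, so unitarity is not automatic. This is exactly where conditions $(1)$ and $(3)$ are needed alongside the Parseval identities $\theta_A^*\theta_\Psi = \theta_\Psi^*\theta_A = I$: the commutation of $W_g$ with $\theta_\Psi\theta_\Psi^*$ collapses $\pi_g^*\pi_g = \theta_A^*(W_{g^{-1}}\theta_\Psi\theta_\Psi^*W_g)\theta_A = \theta_A^*\theta_\Psi\theta_\Psi^*\theta_A = I$, and the commutation with $\theta_A\theta_A^*$ collapses $\pi_g\pi_g^* = I$ in the same way. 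A secondary point needing care is the block-determination argument that $L_p^* X L_q$ for all $p,q$ pins down $X$, since it is what legitimizes the passage from the pointwise hypotheses to the operator commutation relations.
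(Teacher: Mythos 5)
Your proposal is correct and follows essentially the same route as the paper: both directions are handled identically, with the backward implication built on defining $\pi_g = \theta_\Psi^*(\lambda_g\otimes I_{\mathcal{H}_0})\theta_A$ and on the commutation of $\lambda_g\otimes I_{\mathcal{H}_0}$ with $\theta_A\theta_A^*$, $\theta_A\theta_\Psi^*$ and $\theta_\Psi\theta_\Psi^*$, exactly as in the paper. The only (cosmetic) difference is that you extract these commutation relations from the hypotheses via the block-determination argument using $\sum_p L_pL_p^*=I$, whereas the paper verifies them by expanding $(\lambda_g\otimes I_{\mathcal{H}_0})\theta_A\theta_\Psi^*(\lambda_g\otimes I_{\mathcal{H}_0})^*$ as a double sum and reindexing; both are sound.
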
 
\begin{proof}
	$(\Rightarrow)$
	$$A_{gp}\Psi_{gq}^*= A_e \pi_{(gp)^{-1}}(\Psi_e\pi_{(gq)^{-1}})^*=A_e\pi_{p^{-1}}\pi_{g^{-1}}\pi_g\pi_q\Psi_e^*=A_p\Psi_q^*, \quad\forall g,p,q \in G.$$
	Similarly we get other two equalities.
	
	 $(\Leftarrow)$ Using  assumptions, we use the following three equalities in the proof, among them  we derive the second, remainings are similar.
	For all $ g \in G,$
	\begin{align*}
	&	(\lambda_g\otimes I_{\mathcal{H}_0})\theta_A\theta_A^*=\theta_A\theta_A^*(\lambda_g\otimes I_{\mathcal{H}_0}), ~ (\lambda_g\otimes I_{\mathcal{H}_0})\theta_A\theta_\Psi^*=\theta_A\theta_\Psi^*(\lambda_g\otimes I_{\mathcal{H}_0}),\\
	&	(\lambda_g\otimes I_{\mathcal{H}_0})\theta_\Psi\theta_\Psi^*=\theta_\Psi\theta_\Psi^*(\lambda_g\otimes I_{\mathcal{H}_0}).
	\end{align*}
	Noticing $ \lambda_g$ is unitary, we get  $(\lambda_g\otimes I_{\mathcal{H}_0})^{-1}=(\lambda_g\otimes I_{\mathcal{H}_0})^*$; also we observed in the proof of Proposition \ref{REPRESENATIONLEMMA} that  $(\lambda_g\otimes I_{\mathcal{H}_0})L_q=L_{gq}.$ So
	\begin{align*}
		(\lambda_g\otimes I_{\mathcal{H}_0})\theta_A\theta_\Psi^*(\lambda_g\otimes I_{\mathcal{H}_0})^*&=\left(\sum\limits_{p\in G}(\lambda_g\otimes I_{\mathcal{H}_0})L_pA_p\right)\left(\sum\limits_{q\in G}(\lambda_g\otimes I_{\mathcal{H}_0})L_q\Psi_q\right)^*\\
		&=\sum\limits_{p\in G} L_{gp}\left(\sum\limits_{q\in G}A_p\Psi_q^*L_{gq}^*\right)
		=\sum\limits_{r\in G} L_r\left(\sum\limits_{s\in G}A_{g^{-1}r}\Psi_{g^{-1}s}^*L_s^*\right)\\
		& =\sum\limits_{r\in G} L_r\left(\sum\limits_{s\in G}A_r\Psi_s^*L_s^*\right)=\theta_A\theta_\Psi^*.
	\end{align*}
	Define $ \pi : G \ni g  \mapsto \pi_g\coloneqq \theta_\Psi^*(\lambda_g\otimes I_{\mathcal{H}_0})\theta_A  \in \mathcal{B}(\mathcal{H}).$ By using the  Parsevalness, 
	\begin{align*}
	\pi_g\pi_h&=\theta_\Psi^*(\lambda_g\otimes I_{\mathcal{H}_0})\theta_A \theta_\Psi^*(\lambda_h\otimes I_{\mathcal{H}_0})\theta_A =\theta_\Psi^*\theta_A \theta_\Psi^*(\lambda_g\otimes I_{\mathcal{H}_0}) (\lambda_h\otimes I_{\mathcal{H}_0})\theta_A \\
	&= \theta_\Psi^*(\lambda_{gh}\otimes I_{\mathcal{H}_0})\theta_A =\pi_{gh}, \quad \forall g, h \in G
	\end{align*}
	 and 
	 \begin{align*}
	 \pi_g\pi_g^*&=\theta_\Psi^*(\lambda_g\otimes I_{\mathcal{H}_0})\theta_A\theta_A^*(\lambda_{g^{-1}}\otimes I_{\mathcal{H}_0})\theta_\Psi\\
	 &=\theta_\Psi^*\theta_A\theta_A^*(\lambda_g\otimes I_{\mathcal{H}_0})(\lambda_{g^{-1}}\otimes I_{\mathcal{H}_0})\theta_\Psi=I_\mathcal{H}, \\ \pi_g^*\pi_g&=\theta_A^*(\lambda_{g^{-1}}\otimes I_{\mathcal{H}_0})\theta_\Psi\theta_\Psi^*(\lambda_{g}\otimes I_{\mathcal{H}_0})\theta_A\\
	 &=\theta_A^*(\lambda_{g^{-1}}\otimes I_{\mathcal{H}_0})(\lambda_{g}\otimes I_{\mathcal{H}_0})\theta_\Psi\theta_\Psi^*\theta_A=I_\mathcal{H},   \quad \forall  g \in G.
	 \end{align*}
	   Since $ G $ has the discrete topology, this proves $ \pi$ is a unitary representation. It remains to prove  $ A_g=A_e\pi_{g^{-1}}, \Psi_g=\Psi_e\pi_{g^{-1}}  $ for all $ g \in G$. Indeed,
	$$A_e\pi_{g^{-1}}= L_e^*\theta_A\theta_\Psi^*(\lambda_{g^{-1}}\otimes I_{\mathcal{H}_0})\theta_A=L_e^*(\lambda_{g^{-1}}\otimes I_{\mathcal{H}_0})\theta_A\theta_\Psi^*\theta_A=((\lambda_g\otimes I_{\mathcal{H}_0})L_e)^*\theta_A=L_{ge}^*\theta_A=A_g,$$ 
	and
	$$\Psi_e\pi_{g^{-1}}=L_e^*\theta_\Psi \theta_\Psi^*(\lambda_{g^{-1}}\otimes I_{\mathcal{H}_0})\theta_A=L_e^*(\lambda_{g^{-1}}\otimes I_{\mathcal{H}_0})\theta_\Psi\theta_\Psi^*\theta_A=((\lambda_g\otimes I_{\mathcal{H}_0})L_e)^*\theta_\Psi=L_{ge}^*\theta_\Psi=\Psi_g.$$
\end{proof}
In the direct part of Theorem \ref{gc1},   we can remove the word  `Parseval' since it has not been used in the proof;  same is true in the following corollary.

\begin{corollary}
	Let $ G$ be a discrete group, $e$ be the identity of $G$ and $( \{A_g\}_{g\in G},  \{\Psi_g\}_{g\in G})$ be a factorable weak OVF  in $ \mathcal{B}(\mathcal{H},\mathcal{H}_0).$ Then there is a  unitary representation $ \pi$  of $ G$ on  $ \mathcal{H}$  for which
	\begin{enumerate}[\upshape(i)]
		\item  $ A_g=A_eS_{A,\Psi}^{-1}\pi_{g^{-1}}S_{A,\Psi}, \Psi_g=\Psi_e\pi_{g^{-1}}  $ for all $ g \in G$  if and only if $$A_{gp}S_{A,\Psi}^{-1}(S_{A,\Psi}^{-1})^*A_{gq}^*=A_pS_{A,\Psi}^{-1}(S_{A,\Psi}^{-1})^*A_q^* ,\quad A_{gp}S_{A,\Psi}^{-1}\Psi_{gq}^*=A_pS_{A,\Psi}^{-1}\Psi_q^*, \quad \Psi_{gp}\Psi_{gq}^*=\Psi_p\Psi_q^*, \quad  \forall  g,p,q \in G.$$
	\item  $ A_g=A_e\pi_{g^{-1}}, \Psi_g=\Psi_e(S_{A,\Psi}^{-1})^*\pi_{g^{-1}}S_{A,\Psi}  $ for all $ g \in G$  if and only if 
	$$A_{gp}A_{gq}^*=A_pA_q^* ,\quad A_{gp}S_{A,\Psi}^{-1}\Psi_{gq}^*=A_pS_{A,\Psi}^{-1}\Psi_q^*, \quad  \Psi_{gp}(S_{A,\Psi}^{-1})^*S_{A,\Psi}^{-1}\Psi_{gq}^*=\Psi_p(S_{A,\Psi}^{-1})^*S_{A,\Psi}^{-1}\Psi_q^*, \quad \forall  g,p,q \in G.$$
	\end{enumerate}	
\end{corollary}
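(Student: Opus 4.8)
The plan is to observe that this corollary is not an independent result but rather a direct consequence of Theorem \ref{gc1} applied after a similarity transformation to a Parseval frame. The key is the remark closing Section \ref{SIMILARITYCOMPOSITIONANDTENSORPRODUCT}: given the factorable weak OVF $(\{A_g\}_{g\in G},\{\Psi_g\}_{g\in G})$, both $(\{A_gS_{A,\Psi}^{-1}\}_{g\in G},\{\Psi_g\}_{g\in G})$ and $(\{A_g\}_{g\in G},\{\Psi_g(S_{A,\Psi}^{-1})^*\}_{g\in G})$ are \emph{Parseval} factorable weak OVFs similar to the original. Since Theorem \ref{gc1} characterizes exactly when a \emph{Parseval} factorable weak OVF arises from a unitary representation, applying it to each of these two associated Parseval frames will produce parts (i) and (ii) respectively, after translating the resulting identities back into statements about $A_g$ and $\Psi_g$.

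For part (i), I would set $B_g \coloneqq A_gS_{A,\Psi}^{-1}$ and $\Phi_g \coloneqq \Psi_g$, which form a Parseval factorable weak OVF. Theorem \ref{gc1} then yields a unitary representation $\pi$ with $B_g = B_e\pi_{g^{-1}}$ and $\Phi_g = \Phi_e\pi_{g^{-1}}$ if and only if $B_{gp}B_{gq}^* = B_pB_q^*$, $B_{gp}\Phi_{gq}^* = B_p\Phi_q^*$, and $\Phi_{gp}\Phi_{gq}^* = \Phi_p\Phi_q^*$ hold for all $g,p,q\in G$. Substituting $B_g = A_gS_{A,\Psi}^{-1}$ (so that $B_g^* = (S_{A,\Psi}^{-1})^*A_g^*$) and $\Phi_g=\Psi_g$ turns these three identities into precisely the three conditions displayed in (i). On the representation side, $B_g = B_e\pi_{g^{-1}}$ reads $A_gS_{A,\Psi}^{-1} = A_eS_{A,\Psi}^{-1}\pi_{g^{-1}}$, which upon multiplying on the right by $S_{A,\Psi}$ gives $A_g = A_eS_{A,\Psi}^{-1}\pi_{g^{-1}}S_{A,\Psi}$, while $\Phi_g=\Phi_e\pi_{g^{-1}}$ directly gives $\Psi_g = \Psi_e\pi_{g^{-1}}$. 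This is exactly the claim of (i).

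For part (ii) I would instead take $B_g \coloneqq A_g$ and $\Phi_g \coloneqq \Psi_g(S_{A,\Psi}^{-1})^*$, again a Parseval factorable weak OVF, and repeat the argument with Theorem \ref{gc1}. Here $\Phi_g^* = S_{A,\Psi}^{-1}\Psi_g^*$, so the three product conditions become $A_{gp}A_{gq}^* = A_pA_q^*$, then $A_{gp}S_{A,\Psi}^{-1}\Psi_{gq}^* = A_pS_{A,\Psi}^{-1}\Psi_q^*$, and finally $\Psi_{gp}(S_{A,\Psi}^{-1})^*S_{A,\Psi}^{-1}\Psi_{gq}^* = \Psi_p(S_{A,\Psi}^{-1})^*S_{A,\Psi}^{-1}\Psi_q^*$, matching (ii). The representation identities $B_g=B_e\pi_{g^{-1}}$ and $\Phi_g=\Phi_e\pi_{g^{-1}}$ give $A_g=A_e\pi_{g^{-1}}$ and $\Psi_g(S_{A,\Psi}^{-1})^* = \Psi_e(S_{A,\Psi}^{-1})^*\pi_{g^{-1}}$, the latter solved for $\Psi_g$ by right-multiplication by $[(S_{A,\Psi}^{-1})^*]^{-1}$. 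The one place demanding care is precisely this last inversion: since $[(S_{A,\Psi}^{-1})^*]^{-1} = S_{A,\Psi}^*$, the bookkeeping of adjoints versus inverses is the only genuine obstacle, and I would check it explicitly so that the trailing factor in the formula for $\Psi_g$ is recorded correctly. Apart from this, the corollary is a mechanical change of variables on top of Theorem \ref{gc1}, and no new analytic input is required.
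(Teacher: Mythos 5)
Your proof is correct and essentially identical to the paper's: both apply Theorem \ref{gc1} to the Parseval factorable weak OVFs $(\{A_gS_{A,\Psi}^{-1}\}_{g\in G},\{\Psi_g\}_{g\in G})$ and $(\{A_g\}_{g\in G},\{\Psi_g(S_{A,\Psi}^{-1})^*\}_{g\in G})$ and then translate the resulting identities back to $A_g$ and $\Psi_g$. Your caution about the trailing factor in part (ii) is well placed: solving $\Psi_g(S_{A,\Psi}^{-1})^*=\Psi_e(S_{A,\Psi}^{-1})^*\pi_{g^{-1}}$ gives $\Psi_g=\Psi_e(S_{A,\Psi}^{-1})^*\pi_{g^{-1}}S_{A,\Psi}^*$, so the factor $S_{A,\Psi}$ in the stated formula appears to be a typo for $S_{A,\Psi}^*$.
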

\begin{proof}
	We apply Theorem  \ref{gc1} to the factorable Parseval OVF 
	\begin{enumerate}[\upshape(i)]
		\item  $(\{A_gS_{A,\Psi}^{-1}\}_{g\in G}, \{\Psi_g\}_{g\in G})$ to get: there is a  unitary representation $ \pi$  of $ G$ on  $ \mathcal{H}$  for which $ A_gS_{A,\Psi}^{-1}=(A_eS_{A,\Psi}^{-1})\pi_{g^{-1}}, \Psi_g=\Psi_e\pi_{g^{-1}}  $ for all $ g \in G$  if and only if 
		\begin{align*}
		&(A_{gp}S_{A,\Psi}^{-1})(A_{gq}S_{A,\Psi}^{-1})^*=(A_pS_{A,\Psi}^{-1})(A_qS_{A,\Psi}^{-1})^*, \quad (A_{gp}S_{A,\Psi}^{-1})\Psi_{gq}^*= (A_pS_{A,\Psi}^{-1})\Psi_q^*,\\
		&\Psi_{gp}\Psi_{gq}^*=\Psi_p\Psi_q^*, \quad \forall  g,p,q \in G.
		\end{align*}
	\item $( \{A_g\}_{g\in G}, \{\Psi_g(S_{A,\Psi}^{-1})^*\}_{g\in G})$ to get: there is a  unitary representation $ \pi$  of $ G$ on  $ \mathcal{H}$  for which $ A_g=A_e\pi_{g^{-1}}, \Psi_gS_{A,\Psi}^{-1}=(\Psi_e(S_{A,\Psi}^{-1})^*)\pi_{g^{-1}}  $ for all $ g \in G$  if and only if 
	\begin{align*}
	&A_{gp}A_{gq}^*=A_pA_q^* , \quad A_{gp}(\Psi_{gq}(S_{A,\Psi}^{-1})^*)^*=A_p(\Psi_q(S_{A,\Psi}^{-1})^*)^*,\\
	&(\Psi_{gp}(S_{A,\Psi}^{-1})^*)(\Psi_{gq}(S_{A,\Psi}^{-1})^*)^*=(\Psi_p(S_{A,\Psi}^{-1})^*)(\Psi_q(S_{A,\Psi}^{-1})^*)^* , \quad \forall  g,p,q \in G.
	\end{align*}
	\end{enumerate}		
\end{proof}

\section{Frames and group-like unitary systems} \label{FRAMESANDGROUP-LIKEUNITARYSYSTEMS}
We next address the situation of factorable weak OVF whenever it is indexed by group-like unitary systems. Group-like unitary systems arose from the study of Weyl-Heisenberg frames. To the best of our knowledge, this was first formally defined by Gabardo and Han in 2001 \cite{GABARDO}. In the sequel, by $\mathbb{T}$, we mean the standard unit circle group centred at the origin equipped with usual multiplication.
\begin{definition}\cite{GABARDO}
	A collection   $ \mathcal{U}\subseteq \mathcal{B}(\mathcal{H})$  containing $I_\mathcal{H}$ is called as a unitary system.  If the group generated by  unitary system $ \mathcal{U}$, denoted by $ \operatorname{group}(\mathcal{U})$ is such that 
	\begin{enumerate}[\upshape(i)]
		\item $\operatorname{group}(\mathcal{U}) \subseteq \mathbb{T}\mathcal{U}\coloneqq \{\alpha U : \alpha \in \mathbb{T}, U\in \mathcal{U}  \}$, and 
		\item $\mathcal{U}$ is linearly independent, i.e.,  $\mathbb{T}U\ne\mathbb{T}V $ whenever $ U, V \in \mathcal{U}$ are such that $ U\ne V,$ 
	\end{enumerate}
	then $\mathcal{U}$ is called as a group-like unitary system.
\end{definition}

Let $ \mathcal{U}$ be a group-like unitary system. As in \cite{GABARDOHANGROUPLIKE}, we  define mappings 
\begin{align*}
f:\operatorname{group}(\mathcal{U})\rightarrow \mathbb{T} \quad \text{ and } \quad \sigma:\operatorname{group}(\mathcal{U})\rightarrow \mathcal{U}.
\end{align*}
 in the following way. For each  $ U \in  \operatorname{group}(\mathcal{U}) $ there are unique $\alpha\in \mathbb{T}, V \in \mathcal{U} $  such that $ U=\alpha V$. Define $ f(U)=\alpha$ and $\sigma(U)=V $. These  $ f, \sigma $ are well-defined and satisfy 
\begin{align*}
 U=f(U)\sigma(U), \quad \forall U \in \operatorname{group}(\mathcal{U}).
\end{align*}
 These mappings are called as \textit{corresponding mappings} associated to $ \mathcal{U}$. We can picturize these maps as follows. 
 	\begin{center}
 	\[
 	\begin{tikzcd}
 \operatorname{group}(\mathcal{U}) \arrow[d,"\sigma"] \arrow[dr,"f"]\subseteq\mathbb{T}\mathcal{U}\\
 	\mathcal{U}  & \mathbb{T}\arrow[l,] \\
 	\end{tikzcd}
 	\]
 \end{center}
Next result gives certain fundamental properties of corresponding mappings associated with group-like unitary systems.
\begin{proposition}\cite{GABARDOHANGROUPLIKE}\label{PER} 
	For a group-like unitary system $\mathcal{U}$ and $ f, \sigma $ as above,
	\begin{enumerate}[\upshape(i)]
		\item $ f(U\sigma(VW))f(VW)=f(\sigma(UV)W)f(UV), \forall U,V,W \in \operatorname{group}(\mathcal{U}).$
		\item $ \sigma(U\sigma(VW))=\sigma(\sigma(UV)W), \forall U,V,W \in \operatorname{group} (\mathcal{U}).$
		\item $ \sigma(U)=U$ and $ f(U)=1$ for all $ U \in \mathcal{U}.$
		\item If $  V, W \in \operatorname{group} (\mathcal{U}),$ then
		\begin{align*}
			\mathcal{U}&=\{\sigma(UV) : U \in \mathcal{U}\}=\{\sigma(VU^{-1}) : U \in \mathcal{U}\}\\
			&=\{\sigma(VU^{-1}W) : U \in \mathcal{U}\}=\{\sigma(V^{-1}U) : U \in \mathcal{U}\}.
		\end{align*}
		\item For fixed  $  V, W \in \mathcal{U}$, the following mappings are  injective from  $ \mathcal{U} $ to itself:
		$$ U\mapsto \sigma(VU) \quad (\text{resp.} ~ \sigma(UV), \sigma(UV^{-1}), \sigma(V^{-1}U), \sigma(VU^{-1}), \sigma(U^{-1}V), \sigma(VU^{-1}W)).$$
	\end{enumerate}
\end{proposition}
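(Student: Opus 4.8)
The plan is to extract a single structural fact and deploy it everywhere: since $\operatorname{group}(\mathcal{U}) \subseteq \mathbb{T}\mathcal{U}$ and $\mathcal{U}$ is linearly independent, every element of $\operatorname{group}(\mathcal{U})$ admits a \emph{unique} expression $\alpha V$ with $\alpha \in \mathbb{T}$, $V \in \mathcal{U}$. Indeed, $\alpha V = \beta W$ (with $\alpha,\beta \in \mathbb{T}$, $V,W \in \mathcal{U}$) gives $V = \overline{\alpha}\beta W$, so $\mathbb{T}V = \mathbb{T}W$, forcing $V = W$ by linear independence and then $\alpha = \beta$. Thus $f(U)$ and $\sigma(U)$ are precisely the scalar part and the unitary part of this unique decomposition, and equating two such decompositions of the same group element simultaneously produces an equality of scalars and an equality of unitaries. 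This uniqueness principle is the workhorse of the entire proposition.

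Parts (i) and (ii) I would obtain from one computation, expanding the associative product $UVW$ in two ways. Using $VW = f(VW)\sigma(VW)$ and then decomposing $U\sigma(VW) \in \operatorname{group}(\mathcal{U})$, I get $U(VW) = f(VW)\,f(U\sigma(VW))\,\sigma(U\sigma(VW))$; using $UV = f(UV)\sigma(UV)$ and decomposing $\sigma(UV)W \in \operatorname{group}(\mathcal{U})$, I get $(UV)W = f(UV)\,f(\sigma(UV)W)\,\sigma(\sigma(UV)W)$. Associativity equates the two right-hand sides, and the uniqueness principle splits this into the scalar identity (i) and the unitary identity (ii); the scalar factors may be reordered freely since they are central. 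This double-expansion is the crux.

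Part (iii) is immediate: for $U \in \mathcal{U}$ the decomposition is $U = 1\cdot U$, so uniqueness gives $f(U) = 1$ and $\sigma(U) = U$. For part (iv) I would prove each set equals $\mathcal{U}$ by two inclusions; the inclusion into $\mathcal{U}$ is automatic because $\sigma$ takes values in $\mathcal{U}$. For surjectivity onto $\mathcal{U}$, given a target $X \in \mathcal{U}$ I would exhibit an explicit preimage: for $U \mapsto \sigma(UV)$ take $U = \sigma(XV^{-1})$, so that $UV = \overline{f(XV^{-1})}\,X$ and hence $\sigma(UV) = X$ by uniqueness. The other three sets follow from the analogous substitutions ($U = \sigma(X^{-1}V)$, and so on), each relying only on the invertibility of $V,W$ inside $\operatorname{group}(\mathcal{U})$. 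Part (v) is the injectivity counterpart: if $\sigma(VU_1) = \sigma(VU_2)$, then $VU_1$ and $VU_2$ differ by the scalar $f(VU_1)\overline{f(VU_2)} \in \mathbb{T}$, so $U_1 = \alpha U_2$ with $\alpha \in \mathbb{T}$, whence $\mathbb{T}U_1 = \mathbb{T}U_2$ and linear independence forces $U_1 = U_2$; the same cancellation argument dispatches every map on the list.

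The only genuinely delicate point is (i)--(ii): one must verify that each intermediate product ($U\sigma(VW)$, $\sigma(UV)W$, and the variants) again lies in $\operatorname{group}(\mathcal{U})$ so that $f$ and $\sigma$ apply to it, and one must track the scalar factors carefully through the two expansions. Everything else reduces mechanically to the uniqueness principle together with the group axioms and the centrality of scalars.
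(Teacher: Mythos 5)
Your proof is correct. Note, however, that the paper itself offers no proof of this proposition to compare against: it is imported verbatim from Gabardo and Han \cite{GABARDOHANGROUPLIKE} and stated as a known result. On its own merits your argument is sound and complete: the uniqueness of the decomposition $U=\alpha V$ ($\alpha\in\mathbb{T}$, $V\in\mathcal{U}$) follows exactly as you say from the linear independence condition $\mathbb{T}V\neq\mathbb{T}W$ for $V\neq W$; the double expansion of $UVW$ correctly yields (i) and (ii) simultaneously (and your caution about checking that $U\sigma(VW)$ and $\sigma(UV)W$ lie in $\operatorname{group}(\mathcal{U})$ is the right point to flag, since $\sigma$ takes values in $\mathcal{U}\subseteq\operatorname{group}(\mathcal{U})$ and the latter is closed under products); and the explicit preimages in (iv) together with the scalar-cancellation argument in (v) are exactly the standard route. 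This matches the original argument in the cited source in spirit, so there is nothing to correct.
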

Since $\operatorname{group} (\mathcal{U}) $ is a group, we note that, in (iv) of Proposition \ref{PER},  we can replace $V$ by $V^{-1}$. Hence, whenever  $V \in \operatorname{group} (\mathcal{U})$, we have $\sum_{U \in \mathcal{U}}x_U=\sum_{U \in \mathcal{U}}x_{\sigma(VU)}$.

\begin{definition}\cite{GABARDOHANGROUPLIKE}
	A unitary representation $ \pi$ of a group-like unitary system $   \mathcal{U}$ on $ \mathcal{H}$ is an injective mapping from $  \mathcal{U}$ into the set of unitary operators on $ \mathcal{H}$ such that 
	$$\pi(U)\pi(V)=f(UV)\pi(\sigma(UV)) , \quad {\pi(U)}^{-1}=f(U^{-1})\pi(\sigma(U^{-1})), ~ \forall U,V \in \mathcal{U}, $$
	where $ f$ and $ \sigma $  are the corresponding mappings associated with $  \mathcal{U}.$ 
\end{definition}
Since $\pi $ is injective, once we have a unitary representation of a group-like unitary system $   \mathcal{U}$ on $\mathcal{H}$, then $ \pi(\mathcal{U})$ is also a group-like unitary system.

Let $ \mathcal{U}$ be a  group-like unitary system and  $ \{\chi_U\}_{U\in \mathcal{U}}$ be the  standard orthonormal  basis for $\ell^2(\mathcal{U}) $.  We define $\lambda $ on  $ \mathcal{U}$  by $ \lambda_U\chi_V=f(UV)\chi_{\sigma(UV)}, \forall   U,V \in \mathcal{U}.$ Then $ \lambda $ is a unitary  representation which we call as  left  regular representation of $ \mathcal{U}$. Similarly, we define right regular representation of $ \mathcal{U}$ by $ \rho_U\chi_V=f(VU^{-1})\chi_{\sigma(VU^{-1})}, \forall U,V \in \mathcal{U}$ \cite{GABARDOHANGROUPLIKE}.  
Like frame generators for groups, we now define the frame generator for group-like unitary systems.
\begin{definition}
	Let $ \mathcal{U}$  be a group-like unitary system.  An operator $ A$ in $ \mathcal{B}(\mathcal{H}, \mathcal{H}_0)$ is called an  operator frame generator (resp. a  Parseval  frame generator) w.r.t. $ \Psi$ in $ \mathcal{B}(\mathcal{H}, \mathcal{H}_0)$  if $(\{A_U\coloneqq A\pi(U)^{-1}\}_{U\in \mathcal{U}},\{\Psi_U\coloneqq \Psi\pi(U)^{-1}\}_{U\in \mathcal{U}})$ is a factorable weak OVF  (resp. a Parseval) in $ \mathcal{B}(\mathcal{H}, \mathcal{H}_0)$.  We write $ (A,\Psi)$ is an operator frame generator for $\pi$.
\end{definition}

\begin{theorem}\label{CHARACTERIZATIONGROUPLIKE}
	Let $ \mathcal{U}$ be a  group-like unitary system, $ I$ be the identity of $ \mathcal{U}$ and $(\{A_U\}_{U\in \mathcal{U}},\{\Psi_U\}_{U\in \mathcal{U}})$ be a factorable Parseval weak OVF  in $ \mathcal{B}(\mathcal{H},\mathcal{H}_0)$ with $ \theta_A^*$ injective. Then there is a unitary representation $ \pi$  of $ \mathcal{U}$ on  $ \mathcal{H}$  for which 
	$$ A_U=A_I\pi(U)^{-1}, ~\Psi_U=\Psi_I\pi(U)^{-1},~\forall  U \in \mathcal{U}$$
	if and only if 
	\begin{align*}
		A_{\sigma(UV)}A_{\sigma(UW)}^*&=f(UV)\overline{f(UW)} A_VA_W^* ,\\
		A_{\sigma(UV)}\Psi_{\sigma(UW)}^*&=f(UV)\overline{f(UW)} A_V\Psi_W^*,\\ \Psi_{\sigma(UV)}\Psi_{\sigma(UW)}^*&=f(UV)\overline{f(UW)} \Psi_V\Psi_W^*, \quad \forall  U,V,W \in \mathcal{U}.
	\end{align*}
\end{theorem}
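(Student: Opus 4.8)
The proof I have in mind runs parallel to that of Theorem \ref{gc1}, with the group law replaced by the multiplier data $(f,\sigma)$ of the group-like system; the one genuinely new ingredient is that a unitary representation of $\mathcal{U}$ must be injective, and this is precisely where the hypothesis ``$\theta_A^*$ injective'' is consumed. For the forward direction I would argue by direct computation. Rearranging the defining relation $\pi(U)\pi(V)=f(UV)\pi(\sigma(UV))$ gives $\pi(\sigma(UV))=\overline{f(UV)}\,\pi(U)\pi(V)$, whence
$$\pi(\sigma(UV))^{-1}\pi(\sigma(UW))=f(UV)\overline{f(UW)}\,\pi(V)^{-1}\pi(W).$$
Since each $\pi(U)$ is unitary, $A_{\sigma(UV)}A_{\sigma(UW)}^*=A_I\pi(\sigma(UV))^{-1}\pi(\sigma(UW))A_I^*=f(UV)\overline{f(UW)}A_VA_W^*$, and the two remaining identities are identical computations.

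For the converse the first thing I would record is the computational identity $(\lambda_U\otimes I_{\mathcal{H}_0})L_V=f(UV)L_{\sigma(UV)}$, the group-like analogue of the relation $(\lambda_g\otimes I_{\mathcal{H}_0})L_q=L_{gq}$ used in Proposition \ref{REPRESENATIONLEMMA}. Feeding this into $\theta_A=\sum_V L_VA_V$ and $\theta_\Psi=\sum_W L_W\Psi_W$, conjugating $\theta_A\theta_\Psi^*$ by $\lambda_U\otimes I_{\mathcal{H}_0}$ produces $\sum_{V,W}f(UV)\overline{f(UW)}L_{\sigma(UV)}A_V\Psi_W^*L_{\sigma(UW)}^*$; the middle hypothesis rewrites the coefficients as $A_{\sigma(UV)}\Psi_{\sigma(UW)}^*$, and the substitutions $V\mapsto\sigma(UV)$, $W\mapsto\sigma(UW)$, which are bijections of $\mathcal{U}$ by Proposition \ref{PER}(v), collapse the double sum back to $\theta_A\theta_\Psi^*$. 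Hence $\lambda_U\otimes I_{\mathcal{H}_0}$ commutes with $\theta_A\theta_\Psi^*$, and the first and third hypotheses give the same for $\theta_A\theta_A^*$ and $\theta_\Psi\theta_\Psi^*$. These three commutation relations are the workhorse of the argument.

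I would then set $\pi(U)\coloneqq\theta_\Psi^*(\lambda_U\otimes I_{\mathcal{H}_0})\theta_A$. Using the $\theta_A\theta_\Psi^*$ commutation, the Parseval identities $\theta_\Psi^*\theta_A=I_\mathcal{H}=\theta_A^*\theta_\Psi$, and the fact that $\lambda$ is itself a unitary representation of $\mathcal{U}$ (so $\lambda_U\lambda_V=f(UV)\lambda_{\sigma(UV)}$), one checks $\pi(U)\pi(V)=f(UV)\pi(\sigma(UV))$; the $\theta_A\theta_A^*$ and $\theta_\Psi\theta_\Psi^*$ relations give $\pi(U)\pi(U)^*=\pi(U)^*\pi(U)=I_\mathcal{H}$, so each $\pi(U)$ is unitary, and $\pi(I)=\theta_\Psi^*\theta_A=I_\mathcal{H}$ since $\lambda_I$ is the identity. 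The inverse relation $\pi(U)^{-1}=f(U^{-1})\pi(\sigma(U^{-1}))$ then follows with no extra machinery: taking $V=\sigma(U^{-1})\in\mathcal{U}$ and using $U^{-1}=f(U^{-1})\sigma(U^{-1})$ one finds $U\sigma(U^{-1})=\overline{f(U^{-1})}I$, so the multiplier relation yields $\pi(U)\pi(\sigma(U^{-1}))=\overline{f(U^{-1})}\,\pi(I)=\overline{f(U^{-1})}I_\mathcal{H}$.

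The step I expect to be the main obstacle, and the sole place ``$\theta_A^*$ injective'' is needed, is the injectivity of $\pi$. The $\theta_A\theta_\Psi^*$ relation says exactly that $\lambda_U\otimes I_{\mathcal{H}_0}$ commutes with the idempotent $P_{A,\Psi}=\theta_A\theta_\Psi^*$, so $\theta_A\pi(U)\theta_\Psi^*=P_{A,\Psi}(\lambda_U\otimes I_{\mathcal{H}_0})P_{A,\Psi}=(\lambda_U\otimes I_{\mathcal{H}_0})P_{A,\Psi}$. If $\pi(U)=\pi(V)$ this forces $((\lambda_U-\lambda_V)\otimes I_{\mathcal{H}_0})\theta_A\theta_\Psi^*=0$; cancelling the surjection $\theta_\Psi^*$ on the right gives $((\lambda_U-\lambda_V)\otimes I_{\mathcal{H}_0})\theta_A=0$, and because $\theta_A^*$ injective makes $\theta_A$ have dense range, continuity yields $\lambda_U=\lambda_V$, hence $U=V$ (as $\lambda_U\chi_I=\chi_U$). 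Finally, for the factorization identities I would compute, using $A_I=L_I^*\theta_A$ and $\pi(U)^{-1}=\pi(U)^*=\theta_A^*(\lambda_U^{-1}\otimes I_{\mathcal{H}_0})\theta_\Psi$, that $A_I\pi(U)^{-1}=L_I^*\theta_A\theta_A^*(\lambda_U^{-1}\otimes I_{\mathcal{H}_0})\theta_\Psi$; moving $\lambda_U^{-1}\otimes I_{\mathcal{H}_0}$ across $\theta_A\theta_A^*$, simplifying $\theta_A\theta_A^*\theta_\Psi=\theta_A$ via $\theta_A^*\theta_\Psi=I_\mathcal{H}$, and invoking $L_I^*(\lambda_U^{-1}\otimes I_{\mathcal{H}_0})=L_U^*$ (the adjoint of $(\lambda_U\otimes I_{\mathcal{H}_0})L_I=f(U)L_{\sigma(U)}=L_U$), gives $A_I\pi(U)^{-1}=L_U^*\theta_A=A_U$, and the computation for $\Psi_I\pi(U)^{-1}=\Psi_U$ is symmetric.
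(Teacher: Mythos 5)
Your proposal is correct and follows essentially the same route as the paper's proof: the same identity $(\lambda_U\otimes I_{\mathcal{H}_0})L_V=f(UV)L_{\sigma(UV)}$, the same three commutation relations with $\theta_A\theta_A^*$, $\theta_A\theta_\Psi^*$, $\theta_\Psi\theta_\Psi^*$, the same definition $\pi(U)=\theta_\Psi^*(\lambda_U\otimes I_{\mathcal{H}_0})\theta_A$, and the same final computations. Your two local deviations --- deducing $\pi(U)^{-1}=f(U^{-1})\pi(\sigma(U^{-1}))$ from the already-verified multiplier relation via $U\sigma(U^{-1})=\overline{f(U^{-1})}I$, and proving injectivity by sandwiching with $\theta_A$ and the surjection $\theta_\Psi^*$ rather than right-multiplying by $\theta_A^*$ --- are both valid and equivalent in substance to the paper's steps.
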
 

\begin{proof}
	$(\Rightarrow)$ For all $U,V,W \in \mathcal{U}$, we have
	\begin{align*}
		A_{\sigma(UV)}A_{\sigma(UW)}^*&= A_I\pi(\sigma(UV))^{-1}( A_I\pi(\sigma(UW))^{-1} )^*\\
		&=A_I(\overline{f(UV)}\pi(U)\pi(V))^{-1} \overline{f(UW)}\pi(U)\pi(W)A^*_I\\
		&=f(UV)\overline{f(UW)} A_I\pi(V)^{-1}(A_I\pi(W)^{-1})^*\\
		&=f(UV)\overline{f(UW)} A_VA_W^*. 
	\end{align*}
	Others can be shown similarly. 
	
	$(\Leftarrow)$  We have to construct unitary representation which satisfies the stated conditions. Following observation plays an important role in this part. Let $ h\in \mathcal{H}.$ Then $ L_{\sigma(UV)}h=\chi_{\sigma(UV)}\otimes h=\overline{f(UV)}\lambda_U\chi_V\otimes h=\overline{f(UV)}(\lambda_U\chi_V\otimes h)=\overline{f(UV)}(\lambda_U\otimes I_{\mathcal{H}_0})(\chi_V\otimes h)=\overline{f(UV)}(\lambda_U\otimes I_{\mathcal{H}_0})L_V h.$
	
	As in the proof of Theorem \ref{gc1},  we argue the following, for which now we prove the first.
	For all $ U \in \mathcal{U},$
	\begin{align*}
		&(\lambda_U\otimes I_{\mathcal{H}_0})\theta_A\theta_A^*=\theta_A\theta_A^*(\lambda_U\otimes I_{\mathcal{H}_0}), ~ (\lambda_U\otimes I_{\mathcal{H}_0})\theta_A\theta_\Psi^*=\theta_A\theta_\Psi^*(\lambda_U\otimes I_{\mathcal{H}_0}),\\
		&(\lambda_U\otimes I_{\mathcal{H}_0})\theta_\Psi\theta_\Psi^*=\theta_\Psi\theta_\Psi^*(\lambda_U\otimes I_{\mathcal{H}_0}).
	\end{align*}
	Consider 
	\begin{align*}
		(\lambda_U\otimes I_{\mathcal{H}_0})\theta_A\theta_A^*(\lambda_U\otimes I_{\mathcal{H}_0})^*&=\left(\sum\limits_{V\in \mathcal{U}}(\lambda_U\otimes I_{\mathcal{H}_0})L_VA_V\right)\left(\sum\limits_{W\in \mathcal{U}}(\lambda_U\otimes I_{\mathcal{H}_0})L_WA_W\right)^*\\ &=\left(\sum\limits_{V\in \mathcal{U}} f(UV)L_{\sigma(UV)}A_V\right)\left(\sum\limits_{W\in \mathcal{U}}f(UW)L_{\sigma(UW)}A_W\right)^*\\
		&=\sum\limits_{V\in \mathcal{U}} L_{\sigma(UV)}\left(\sum\limits_{W\in \mathcal{U}}f(UV)\overline{f(UW)}A_VA_W^*L_{\sigma(UW)}^*\right)\\
		&= \sum\limits_{V\in \mathcal{U}} L_{\sigma(UV)}\left(\sum\limits_{W\in \mathcal{U}}A_{\sigma(UV)}A_{\sigma(UW)}^*L_{\sigma(UW)}^*\right)\\
		&=\left(\sum\limits_{V\in \mathcal{U}} L_{\sigma(UV)}A_{\sigma(UV)}\right)\left(\sum\limits_{W\in \mathcal{U}}L_{\sigma(UW)}A_{\sigma(UW)}\right)^*\\
		&=\theta_A\theta_A^*
	\end{align*}
	where last part of Proposition \ref{PER} is used in the last equality.
	
	Define $ \pi : \mathcal{U} \ni U  \mapsto \pi(U)\coloneqq \theta_\Psi^*(\lambda_U\otimes I_{\mathcal{H}_0})\theta_A  \in \mathcal{B}(\mathcal{H}).$  Then $ \pi(U)\pi(V)=\theta_\Psi^*(\lambda_U\otimes I_{\mathcal{H}_0})\theta_A \theta_\Psi^*(\lambda_V\otimes I_{\mathcal{H}_0})\theta_A =\theta_\Psi^*\theta_A \theta_\Psi^*(\lambda_U\otimes I_{\mathcal{H}_0}) (\lambda_V\otimes I_{\mathcal{H}_0})\theta_A = \theta_\Psi^*(\lambda_U\lambda_V\otimes I_{\mathcal{H}_0})\theta_A =\theta_\Psi^*(f(UV)\lambda_{\sigma(UV)}\otimes I_{\mathcal{H}_0})\theta_A  =f(UV) \theta_\Psi^*(\lambda_{\sigma(UV)}\otimes I_{\mathcal{H}_0})\theta_A =f(UV)\pi({\sigma(UV)})$ for all $U, V \in \mathcal{U},$ and $\pi(U)\pi(U)^*=\theta_\Psi^*(\lambda_U\otimes I_{\mathcal{H}_0})\theta_A\theta_A^*(\lambda_U^*\otimes I_{\mathcal{H}_0})\theta_\Psi=\theta_\Psi^*\theta_A\theta_A^*(\lambda_U\otimes I_{\mathcal{H}_0})(\lambda_U^*\otimes I_{\mathcal{H}_0})\theta_\Psi=I_\mathcal{H},  \pi(U)^*\pi(U)=\theta_A^*(\lambda_U^*\otimes I_{\mathcal{H}_0})\theta_\Psi\theta_\Psi^*(\lambda_{U}\otimes I_{\mathcal{H}_0})\theta_A=\theta_A^*(\lambda_U^*\otimes I_{\mathcal{H}_0})(\lambda_U\otimes I_{\mathcal{H}_0})\theta_\Psi\theta_\Psi^*\theta_A=I_\mathcal{H} $ for all $ U \in \mathcal{U}$. Further, 
	\begin{align*}
		\pi(U)f(U^{-1})\pi(\sigma(U^{-1}))&=\theta_\Psi^*(\lambda_U\otimes I_{\mathcal{H}_0})\theta_Af(U^{-1})\theta_\Psi^*(\lambda_{\sigma(U^{-1})}\otimes I_{\mathcal{H}_0})\theta_A \\
		&=f(U^{-1})\theta_\Psi^*\theta_A\theta_\Psi^*(\lambda_U\otimes I_{\mathcal{H}_0})(\lambda_{\sigma(U^{-1})}\otimes I_{\mathcal{H}_0})\theta_A \\
		&=f(U^{-1})\theta_\Psi^*(\lambda_U\otimes I_{\mathcal{H}_0})(\lambda_{\sigma(U^{-1})}\otimes I_{\mathcal{H}_0})\theta_A\\
		&=f(U^{-1})\theta_\Psi^*(\lambda_U\lambda_{\sigma(U^{-1})}\otimes I_{\mathcal{H}_0})\theta_A\\
		&=f(U^{-1})\theta_\Psi^*(f(U\sigma(U^{-1}))\lambda_{\sigma(U\sigma (U^{-1}))}\otimes I_{\mathcal{H}_0})\theta_A\\
		&=\theta_\Psi^*(f(U\sigma(U^{-1}I))f(U^{-1}I)\lambda_{\sigma(U\sigma (U^{-1}I))}\otimes I_{\mathcal{H}_0})\theta_A\\
		&=\theta_\Psi^*(f(\sigma(UU^{-1})I)f(UU^{-1})\lambda_{\sigma({\sigma(UU^{-1})I})}\otimes I_{\mathcal{H}_0})\theta_A\\
		&=\theta_\Psi^*(\lambda_I\otimes I_{\mathcal{H}_0})\theta_A=I_\mathcal{H}
	\end{align*}
	$\Rightarrow {\pi(U)}^{-1}=f(U^{-1})\pi(\sigma(U^{-1}))$ for all $ U \in \mathcal{U}$. We shall now use $ \theta_A^*$ is injective   to show $ \pi$ is injective and thereby to  get $ \pi$ is a unitary representation. Let $ \pi(U)=\pi(V).$ Then 
	\begin{align*}
	&\theta_\Psi^*(\lambda_U\otimes I_{\mathcal{H}_0})\theta_A =\theta_\Psi^*(\lambda_V\otimes I_{\mathcal{H}_0})\theta_A  \Rightarrow \theta_\Psi^*(\lambda_U\otimes I_{\mathcal{H}_0})\theta_A \theta_A^*=\theta_\Psi^*(\lambda_V\otimes I_{\mathcal{H}_0})\theta_A  \theta_A^* \\
	&\Rightarrow \theta_\Psi^*\theta_A  \theta_A^*(\lambda_U\otimes I_{\mathcal{H}_0}) =\theta_\Psi^*\theta_A  \theta_A^*(\lambda_V\otimes I_{\mathcal{H}_0}) \Rightarrow \lambda_U\otimes I_{\mathcal{H}_0}=\lambda_V\otimes I_{\mathcal{H}_0}.
	\end{align*}
	 We show $ U$ and $ V$ are identical at elementary tensors. For $ h \in \ell^2(\mathcal{U}),  y \in \mathcal{H}_0,  $ we get, $(\lambda_U\otimes I_{\mathcal{H}_0})(h\otimes y)=(\lambda_V\otimes I_{\mathcal{H}_0})(h\otimes y)\Rightarrow \lambda_Uh\otimes y=\lambda_Vh\otimes y \Rightarrow (\lambda_U-\lambda_V)h\otimes y=0 \Rightarrow 0= \langle  (\lambda_U-\lambda_V)h\otimes y, (\lambda_U-\lambda_V)h\otimes y\rangle= \|(\lambda_U-\lambda_V)h\|^2 \|y\|^2 .$ We may assume $y\neq0$ (if  $y=0$, then  $h\otimes y=0$). But then $ (\lambda_U-\lambda_V)(h)=0,$ and $ \lambda$ is a unitary representation (it is injective) gives $ U=V.$ The pending part  $ A_U=A_I\pi(U)^{-1}, \Psi_U=\Psi_I\pi(U)^{-1}  $ for all $ U \in \mathcal{U}$ we show, now.
	\begin{align*}
		A_I\pi(U)^{-1}&= L_I^*\theta_A(\theta_\Psi^*(\lambda_U\otimes I_{\mathcal{H}_0})\theta_A)^*
		= L_I^*(\theta_\Psi^*(\lambda_U\otimes I_{\mathcal{H}_0})\theta_A\theta_A^*)^*\\
		&=L_I^*(\theta_\Psi^*\theta_A\theta_A^*(\lambda_U\otimes I_{\mathcal{H}_0}))^*
		 = L_I^*(\theta_A^*(\lambda_U\otimes I_{\mathcal{H}_0}))^*\\
		 &=(\theta_A^*(\lambda_U\otimes I_{\mathcal{H}_0})L_I)^*= (\theta_A^*\overline{f(UI)}(\lambda_U\otimes I_{\mathcal{H}_0})L_I)^*\\
		&= (\theta_A^*L_{\sigma({UI})})^*=L_U^*\theta_A=A_U
	\end{align*}
	and
	\begin{align*}
		\Psi_I\pi(U)^{-1}&= L_I^*\theta_\Psi(\theta_\Psi^*(\lambda_U\otimes I_{\mathcal{H}_0})\theta_A)^*
		= L_I^*(\theta_\Psi^*(\lambda_U\otimes I_{\mathcal{H}_0})\theta_A\theta_\Psi^*)^*\\
		&=L_I^*(\theta_\Psi^*\theta_A\theta_\Psi^*(\lambda_U\otimes I_{\mathcal{H}_0}))^*
		 = L_I^*(\theta_\Psi^*(\lambda_U\otimes I_{\mathcal{H}_0}))^*\\
		 &=(\theta_\Psi^*(\lambda_U\otimes I_{\mathcal{H}_0})L_I)^*= (\theta_\Psi^*\overline{f(UI)}(\lambda_U\otimes I_{\mathcal{H}_0})L_I)^*\\
		&= (\theta_\Psi^*L_{\sigma({UI})})^*=L_U^*\theta_\Psi=\Psi_U.
	\end{align*}
\end{proof}
Note that neither  Parsevalness of the frame nor $ \theta_A^*$  is injective was used  in the direct part of Theorem \ref{CHARACTERIZATIONGROUPLIKE}. Since $ \theta_A$ acts between Hilbert spaces, we know that $ \overline{\theta_A(\mathcal{H})}=\operatorname{Ker}(\theta_A^*)^\perp$ and $ \operatorname{Ker}(\theta_A^*)=\theta_A(\mathcal{H})^\perp.$ From Lemma \ref{DILATIONLEMMA}, the range of $\theta_A$  is closed. Therefore $ \theta_A(\mathcal{H})=\operatorname{Ker}(\theta_A^*)^\perp.$ Thus the condition $ \theta_A^*$ is injective in the Theorem \ref{CHARACTERIZATIONGROUPLIKE} can be replaced by $ \theta_A$ is onto. 
\begin{corollary}
	Let $ \mathcal{U}$ be a  group-like unitary system,  $ I$ be the identity of $ \mathcal{U}$ and $(\{A_U\}_{U\in \mathcal{U}},\{\Psi_U\}_{U\in \mathcal{U}})$ be a factorable weak OVF   in $ \mathcal{B}(\mathcal{H},\mathcal{H}_0)$  with $ \theta_A^*$   is injective. Then there is a unitary representation $ \pi$  of  $ \mathcal{U}$ on  $ \mathcal{H}$  for which
	\begin{enumerate}[\upshape(i)]
		\item    $ A_U=A_IS^{-1}_{A,\Psi}\pi(U)^{-1}S_{A, \Psi}, \Psi_U=\Psi_I\pi(U)^{-1}  $ for all $ U \in \mathcal{U}$  if and only if 
		\begin{align*}
		&A_{\sigma(UV)}S^{-1}_{A, \Psi}(S_{A,\Psi}^{-1})^*A_{\sigma(UW)}^*=f(UV)\overline{f(UW)} A_VS^{-1}_{A, \Psi}(S_{A,\Psi}^{-1})^*A_W^*,\\
	&	A_{\sigma(UV)}S_{A, \Psi}^{-1}\Psi_{\sigma(UW)}^*=f(UV)\overline{f(UW)} A_VS^{-1}_{A, \Psi}\Psi_W^*,\\
		&\Psi_{\sigma(UV)}\Psi_{\sigma(UW)}^*=f(UV)\overline{f(UW)} \Psi_V\Psi_W^*, \quad \forall U,V,W \in \mathcal{U}.
		\end{align*}
	\item   $ A_U=A_I\pi(U)^{-1}, \Psi_U =\Psi_I(S_{A,\Psi}^{-1})^*\pi(U)^{-1}S_{A, \Psi}  $ for all $ U \in \mathcal{U}$  if and only if 
	\begin{align*}
	&A_{\sigma(UV)}A_{\sigma(UW)}^*=f(UV)\overline{f(UW)} A_VA_W^*,\\
	&A_{\sigma(UV)}S^{-1}_{A, \Psi}\Psi_{\sigma(UW)}^*=f(UV)\overline{f(UW)}A_VS^{-1}_{A, \Psi}\Psi_W^*,\\
	&\Psi_{\sigma(UV)}(S_{A,\Psi}^{-1})^*S^{-1}_{A, \Psi}\Psi_{\sigma(UW)} ^*
	=f(UV)\overline{f(UW)} \Psi_V(S_{A,\Psi}^{-1})^*S^{-1}_{A, \Psi}\Psi_W^*, \quad \forall U,V,W \in \mathcal{U}.
	\end{align*}
	\end{enumerate}
\end{corollary}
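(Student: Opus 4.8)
The plan is to derive both statements from Theorem \ref{CHARACTERIZATIONGROUPLIKE}, in exactly the way the corollary following Theorem \ref{gc1} is derived from Theorem \ref{gc1}. The given pair $(\{A_U\}_{U\in\mathcal{U}},\{\Psi_U\}_{U\in\mathcal{U}})$ need not be Parseval, but two naturally associated pairs are. Since $\sum_{U\in\mathcal{U}}\Psi_U^*A_U=S_{A,\Psi}$, a direct computation gives
$$\sum_{U\in\mathcal{U}}\Psi_U^*(A_US_{A,\Psi}^{-1})=S_{A,\Psi}S_{A,\Psi}^{-1}=I_\mathcal{H}, \qquad \sum_{U\in\mathcal{U}}(\Psi_U(S_{A,\Psi}^{-1})^*)^*A_U=S_{A,\Psi}^{-1}S_{A,\Psi}=I_\mathcal{H},$$
so that $(\{A_US_{A,\Psi}^{-1}\}_{U\in\mathcal{U}},\{\Psi_U\}_{U\in\mathcal{U}})$ and $(\{A_U\}_{U\in\mathcal{U}},\{\Psi_U(S_{A,\Psi}^{-1})^*\}_{U\in\mathcal{U}})$ are factorable Parseval weak OVFs. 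For part (i) I would apply Theorem \ref{CHARACTERIZATIONGROUPLIKE} to the first pair, and for part (ii) to the second pair.

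Before invoking the theorem I must verify its standing hypothesis that the analysis operator of the first component has injective adjoint. For (ii) the first component is the unchanged $\{A_U\}$, so this is exactly the assumed injectivity of $\theta_A^*$. For (i) the first component is $\{A_US_{A,\Psi}^{-1}\}$, whose analysis operator is $\theta_AS_{A,\Psi}^{-1}$ (by the computation in Lemma \ref{SIM}), and its adjoint $(S_{A,\Psi}^{-1})^*\theta_A^*$ is injective because $(S_{A,\Psi}^{-1})^*$ is invertible and $\theta_A^*$ is injective. Hence Theorem \ref{CHARACTERIZATIONGROUPLIKE} applies in both cases.

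The iff-conditions then follow by substitution. Writing the theorem's three equalities for a Parseval pair $(\{B_U\},\{\Phi_U\})$, namely $B_{\sigma(UV)}B_{\sigma(UW)}^*=f(UV)\overline{f(UW)}B_VB_W^*$ and its two companions, and inserting $B_U=A_US_{A,\Psi}^{-1},\Phi_U=\Psi_U$ for (i) and $B_U=A_U,\Phi_U=\Psi_U(S_{A,\Psi}^{-1})^*$ for (ii), then pushing the constant factors $S_{A,\Psi}^{-1}$ and $(S_{A,\Psi}^{-1})^*$ through the adjoints, reproduces verbatim the three displayed conditions in each part. The representation formulas are obtained by solving the theorem's output for the original operators: from $A_US_{A,\Psi}^{-1}=(A_IS_{A,\Psi}^{-1})\pi(U)^{-1}$ one multiplies on the right by $S_{A,\Psi}$ to get $A_U=A_IS_{A,\Psi}^{-1}\pi(U)^{-1}S_{A,\Psi}$, and similarly one solves the output relation for the $\Psi$-component in (ii).

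The main obstacle is bookkeeping rather than conceptual: one must track adjoints and the invertibility of $S_{A,\Psi}$ carefully when transporting the injectivity hypothesis to the modified frame in (i) and when solving the output relations for $A_U$ and $\Psi_U$. No operator-theoretic input beyond Theorem \ref{CHARACTERIZATIONGROUPLIKE} and the two Parseval computations above is required.
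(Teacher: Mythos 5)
Your proposal is correct and follows essentially the same route as the paper: apply Theorem \ref{CHARACTERIZATIONGROUPLIKE} to the Parseval pairs $(\{A_US_{A,\Psi}^{-1}\}_{U\in\mathcal{U}},\{\Psi_U\}_{U\in\mathcal{U}})$ and $(\{A_U\}_{U\in\mathcal{U}},\{\Psi_U(S_{A,\Psi}^{-1})^*\}_{U\in\mathcal{U}})$ and translate the resulting conditions. Your explicit check that the injectivity hypothesis on $\theta_A^*$ transfers to the modified pair in (i) is a detail the paper leaves implicit, and it is verified correctly.
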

\begin{proof}
	We try to apply Theorem \ref{CHARACTERIZATIONGROUPLIKE} to the factorable  Parseval OVF 
	\begin{enumerate}[\upshape(i)]
		\item  $(\{A_US_{A,\Psi}^{-1}\}_{U\in \mathcal{U}} , \{\Psi_U\}_{U\in \mathcal{U}})$. Then  there is a unitary representation $ \pi$  of  $ \mathcal{U}$ on  $ \mathcal{H}$  for which  $ A_US_{A,\Psi}^{-1}=(A_IS^{-1}_{A,\Psi})\pi(U)^{-1}, \Psi_U=\Psi_I\pi(U)^{-1}  $ for all $ U \in \mathcal{U}$  if and only if 
		\begin{align*}
		&(A_{\sigma(UV)}S^{-1}_{A, \Psi})(A_{\sigma(UW)}S_{A,\Psi}^{-1})^*=f(UV)\overline{f(UW)}( A_VS^{-1}_{A, \Psi})(A_WS_{A,\Psi}^{-1})^*,\\
		&(A_{\sigma(UV)}S_{A, \Psi}^{-1}) \Psi_{\sigma(UW)}^*=
		f(UV)\overline{f(UW)}( A_VS^{-1}_{A, \Psi})\Psi_W^*,\\
		&\Psi_{\sigma(UV)}\Psi_{\sigma(UW)}^*=f(UV)\overline{f(UW)} \Psi_V\Psi_W^*, \quad \forall U,V,W \in \mathcal{U}.
		\end{align*}
		\item  $(\{A_U\}_{U\in \mathcal{U}} , \{\Psi_U(S_{A,\Psi}^{-1})^*\}_{U\in \mathcal{U}})$. Then there is a unitary representation $ \pi$  of  $ \mathcal{U}$ on  $ \mathcal{H}$  for which  $ A_U=A_I\pi(U)^{-1}, \Psi_U(S_{A,\Psi}^{-1})^*=(\Psi_I(S_{A,\Psi}^{-1})^*)\pi(U)^{-1}  $ for all $ U \in \mathcal{U}$  if and only if 
		\begin{align*}
		&A_{\sigma(UV)}A_{\sigma(UW)}^*=
		f(UV)\overline{f(UW)} A_VA_W^*,\\
		&A_{\sigma(UV)}(\Psi_{\sigma(UW)}(S_{A,\Psi}^{-1})^*)^*=f(UV)\overline{f(UW)}A_V(\Psi_W(S_{A,\Psi}^{-1})^*)^*,\\
		&(\Psi_{\sigma(UV)}(S_{A,\Psi}^{-1})^*)(\Psi_{\sigma(UW)}(S_{A,\Psi}^{-1})^*)^*=f(UV)\overline{f(UW)} (\Psi_V(S_{A,\Psi}^{-1})^*)(\Psi_W(S_{A,\Psi}^{-1})^*)^*, \quad \forall U,V,W \in \mathcal{U}.
		\end{align*}
	\end{enumerate}
\end{proof}

\section{Perturbations}\label{PERTURBATIONS}

In this last section we study the stability of factorable weak OVFs. Recall that Paley-Wiener theorem \cite{ARSOVE} in Banach spaces says that sequences which are close to Schauder basis are again Schauder bases. Since a frame will also give a series representation,  it is natural to ask whether a sequence close to frame is a frame. This was first derived by Christensen in 1995 \cite{PALEY1}. Three months later, Christensen generalized this result further \cite{PALEY2}. After two years, Casazza and Christensen further extended the result \cite{PALEY3}. As long as OVFs are concerned, Sun derived stability results \cite{SUN2}. We now obtain perturbation results for factorable weak OVFs. For this, we need a theorem. This result extends a  result of  Hilding \cite{HILDING}.

 \begin{theorem}\cite{CASAZZAKALTON, PALEY3}\label{cc1}
	Let $ \mathcal{X}, \mathcal{Y}$ be Banach spaces, $ U : \mathcal{X}\rightarrow \mathcal{Y}$ be a bounded invertible operator. If  a bounded linear  operator $ V : \mathcal{X}\rightarrow \mathcal{Y}$ is  such that there exist  $ \alpha, \beta \in \left [0, 1  \right )$ with 
	$$ \|Ux-Vx\|\leq\alpha\|Ux\|+\beta\|Vx\|,\quad \forall x \in  \mathcal{X},$$
	then $ V $ is bounded invertible and 
	$$ \frac{1-\alpha}{1+\beta}\|Ux\|\leq\|Vx\|\leq\frac{1+\alpha}{1-\beta} \|Ux\|, \quad\forall x \in  \mathcal{X};$$
	$$ \frac{1-\beta}{1+\alpha}\frac{1}{\|U\|}\|y\|\leq\|V^{-1}y\|\leq\frac{1+\beta}{1-\alpha} \|U^{-1}\|\|y\|, \quad\forall y \in  \mathcal{Y}.$$
\end{theorem}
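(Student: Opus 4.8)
The plan is to split the statement into two essentially independent parts: the two-sided norm comparison, which is a pure triangle-inequality computation, and the invertibility of $V$, which is the real content. I would first dispatch the norm estimates. From the hypothesis and the triangle inequality, $\|Vx\| \le \|Ux\| + \|Ux - Vx\| \le (1+\alpha)\|Ux\| + \beta\|Vx\|$, so $(1-\beta)\|Vx\| \le (1+\alpha)\|Ux\|$; symmetrically $\|Ux\| \le \|Vx\| + \|Ux - Vx\| \le \alpha\|Ux\| + (1+\beta)\|Vx\|$ gives $(1-\alpha)\|Ux\| \le (1+\beta)\|Vx\|$. Together these are the middle display. Since $U$ is invertible we have $\|Ux\| \ge \|U^{-1}\|^{-1}\|x\|$, so the lower estimate forces $\|Vx\| \ge \frac{1-\alpha}{(1+\beta)\|U^{-1}\|}\|x\|$; hence $V$ is bounded below, injective, with closed range. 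Once invertibility is in hand, the last display follows by substituting $x = V^{-1}y$ into the middle display and using $\|U^{-1}\|^{-1}\|V^{-1}y\| \le \|UV^{-1}y\| \le \|U\|\,\|V^{-1}y\|$.

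The crux is surjectivity of $V$, and I would obtain it by a homotopy argument rather than a Neumann series, because the naive estimate $\|I - VU^{-1}\| \le \frac{\alpha+\beta}{1-\beta}$ need not be less than $1$. Consider the affine path $V_t := (1-t)U + tV$ for $t \in [0,1]$, which is norm-continuous with $V_0 = U$ and $V_1 = V$. Writing $Vx = t^{-1}\big(V_t x - (1-t)Ux\big)$ for $t>0$ and substituting into the hypothesis, one checks
\[
\|Ux - V_t x\| = t\|Ux - Vx\| \le \big(t\alpha + (1-t)\beta\big)\|Ux\| + \beta\|V_t x\|,
\]
so $V_t$ satisfies the same type of inequality relative to $U$, with constants $\alpha_t = t\alpha + (1-t)\beta$ and $\beta_t = \beta$, both in $[0,1)$. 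Applying the first part to each pair $(U, V_t)$ then yields the lower bound $\|V_t x\| \ge \frac{1-\alpha_t}{1+\beta_t}\|Ux\| \ge c\|x\|$ with $c = \frac{1 - \max(\alpha,\beta)}{(1+\beta)\|U^{-1}\|} > 0$, which is \emph{uniform} in $t$.

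Finally I would run a connectedness argument on $S = \{t \in [0,1] : V_t \text{ is invertible}\}$. It contains $0$ and is open, since the invertible operators form an open subset of $\mathcal{B}(\mathcal{X}, \mathcal{Y})$ and $t \mapsto V_t$ is continuous. It is also closed: if $t_n \to t_*$ with each $V_{t_n}$ invertible, the uniform lower bound gives $\|V_{t_n}^{-1}\| \le c^{-1}$, and the standard perturbation fact that $\|V_{t_*} - V_{t_n}\| < c$ implies $V_{t_*} = V_{t_n}\big(I + V_{t_n}^{-1}(V_{t_*} - V_{t_n})\big)$ is invertible shows $t_* \in S$. Since $[0,1]$ is connected, $S = [0,1]$, hence $V = V_1$ is invertible. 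The main obstacle is exactly this surjectivity step: both the Neumann-series approach and a Hahn-Banach duality argument fail to give a contradiction once $\alpha, \beta$ are large, and it is the uniform lower bound along the path $V_t$ together with the clopen/connectedness trick that closes the gap. With invertibility secured, the inverse-norm bounds are then immediate from the substitution indicated in the first paragraph.
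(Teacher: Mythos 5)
Your proof is correct. Note that the paper itself gives no proof of this theorem --- it is imported verbatim from the cited sources (Casazza--Kalton and Casazza--Christensen), so there is no internal argument to compare against; your write-up is in fact essentially the standard proof from those references: the triangle-inequality manipulations give the two-sided comparison $\frac{1-\alpha}{1+\beta}\|Ux\|\leq\|Vx\|\leq\frac{1+\alpha}{1-\beta}\|Ux\|$ and hence a lower bound $\|Vx\|\geq \frac{1-\alpha}{(1+\beta)\|U^{-1}\|}\|x\|$, and surjectivity is obtained exactly as you do, by showing the segment $V_t=(1-t)U+tV$ satisfies the same hypothesis with constants $\alpha_t=t\alpha+(1-t)\beta$, $\beta_t=\beta$, extracting a lower bound uniform in $t$, and running the clopen/connectedness argument on $\{t: V_t \text{ invertible}\}$. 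Your identification of the naive Neumann-series bound $\|I-VU^{-1}\|\leq\frac{\alpha+\beta}{1-\beta}$ as the failure point, and of the uniform bound along the homotopy as the fix, is precisely the right diagnosis.
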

\begin{theorem}\label{PERTURBATION RESULT 1}
	Let $  ( \{A_n\}_{n},  \{\Psi_n\}_{n} ) $  be  a factorable weak OVF in $ \mathcal{B}(\mathcal{H}, \mathcal{H}_0)$. Suppose  $\{B_n\}_{n} $ in $ \mathcal{B}(\mathcal{H}, \mathcal{H}_0)$ is such that  there exist $\alpha, \beta, \gamma \geq 0  $ with $ \max\{\alpha+\gamma\|\theta_\Psi (S_{A,\Psi}^*)^{-1}\|, \beta\}<1$ and for all $m=1,2, \dots, $
	\begin{equation}\label{p3}
	\left\|\sum\limits_{n=1}^m(A_n^*-B_n^*)L_n^*y\right\|\leq \alpha\left\|\sum\limits_{n=1}^mA_n^*L_n^*y\right\|+\beta\left\|\sum\limits_{n=1}^mB_n^*L_n^*y\right\|+\gamma \left(\sum\limits_{n=1}^m\|L_n^*y\|^2\right)^\frac{1}{2},\quad \forall y \in \ell^2(\mathbb{N})\otimes \mathcal{H}_0.
	\end{equation} 
	Then  $  ( \{B_n\}_{n},  \{\Psi_n\}_{n} ) $ is a factorable weak OVF  with bounds $ \frac{1-(\alpha+\gamma\|\theta_\Psi (S_{A,\Psi}^*)^{-1}\|)}{(1+\beta)\|(S_{A,\Psi}^*)^{-1}\|}$ and $\frac{\|\theta_\Psi\|((1+\alpha)\|\theta_A\|+\gamma)}{1-\beta} $.
\end{theorem}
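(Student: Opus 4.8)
The plan is to reduce everything to the Casazza–Christensen perturbation criterion, Theorem \ref{cc1}, applied to the \emph{adjoints} $S_{A,\Psi}^*$ and $S_{B,\Psi}^*$ of the frame operators, after first checking that $(\{B_n\}_n,\{\Psi_n\}_n)$ is genuinely factorable. The reason for passing to adjoints is that $\theta_A^*\theta_\Psi=S_{A,\Psi}^*$, so the synthesis-type estimate (\ref{p3}) is tailored to comparing $S_{A,\Psi}^*$ with $S_{B,\Psi}^*$.

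First I would show the analysis operator $\theta_B=\sum_{n=1}^\infty L_nB_n$ is well-defined and bounded. Applying the triangle inequality to (\ref{p3}) and using $\beta<1$ gives, for every $m$ and $y$,
$$(1-\beta)\left\|\sum_{n=1}^m B_n^*L_n^*y\right\|\le(1+\alpha)\left\|\sum_{n=1}^m A_n^*L_n^*y\right\|+\gamma\left(\sum_{n=1}^m\|L_n^*y\|^2\right)^{1/2}.$$
Since $\sum_{n=1}^m A_n^*L_n^*y\to\theta_A^*y$ and $\sum_{n=1}^m\|L_n^*y\|^2\to\|y\|^2$ by (\ref{LEQUATION}), the right-hand side is bounded; replacing $y$ by $\sum_{n=m+1}^{k}L_nL_n^*y$ in (\ref{p3}) turns this into a bound on the block tails $\sum_{n=m+1}^{k}B_n^*L_n^*y$ whose right-hand side tends to $0$, so the partial sums of $\sum_n B_n^*L_n^*y$ are Cauchy and $\theta_B^*y$ converges. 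Letting $m\to\infty$ yields $\|\theta_B^*y\|\le\frac{(1+\alpha)\|\theta_A\|+\gamma}{1-\beta}\|y\|$, so $\theta_B^*$, hence $\theta_B$, is bounded with this norm estimate; consequently $\sum_{n=1}^\infty\Psi_n^*B_n$ converges in the strong-operator topology to $S_{B,\Psi}=\theta_\Psi^*\theta_B$.

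Next I would let $m\to\infty$ in (\ref{p3}) to get $\|\theta_A^*y-\theta_B^*y\|\le\alpha\|\theta_A^*y\|+\beta\|\theta_B^*y\|+\gamma\|y\|$ for all $y$, and then specialise to $y=\theta_\Psi h$. Writing $\theta_\Psi h=\theta_\Psi(S_{A,\Psi}^*)^{-1}S_{A,\Psi}^*h$ to bound the $\gamma$-term and using $\theta_A^*\theta_\Psi=S_{A,\Psi}^*$, $\theta_B^*\theta_\Psi=S_{B,\Psi}^*$, this becomes
$$\|S_{A,\Psi}^*h-S_{B,\Psi}^*h\|\le\bigl(\alpha+\gamma\|\theta_\Psi(S_{A,\Psi}^*)^{-1}\|\bigr)\|S_{A,\Psi}^*h\|+\beta\|S_{B,\Psi}^*h\|,\quad\forall h\in\mathcal{H}.$$
The hypothesis $\max\{\alpha+\gamma\|\theta_\Psi(S_{A,\Psi}^*)^{-1}\|,\beta\}<1$ is exactly the admissibility condition needed to invoke Theorem \ref{cc1} with $U=S_{A,\Psi}^*$ (invertible), $V=S_{B,\Psi}^*$ (bounded by the previous step), and constants $\widetilde\alpha=\alpha+\gamma\|\theta_\Psi(S_{A,\Psi}^*)^{-1}\|$, $\widetilde\beta=\beta$. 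Thus $S_{B,\Psi}^*$, and hence $S_{B,\Psi}$, is bounded invertible, so $(\{B_n\}_n,\{\Psi_n\}_n)$ is a factorable weak OVF. The frame bounds then drop out: for the upper bound, $\|S_{B,\Psi}h\|=\|\theta_\Psi^*\theta_Bh\|\le\|\theta_\Psi\|\,\|\theta_B\|\,\|h\|$ combined with the norm estimate for $\theta_B$ gives the stated value; for the lower bound, the second inequality chain of Theorem \ref{cc1} yields $\|(S_{B,\Psi}^*)^{-1}\|\le\frac{1+\beta}{1-\widetilde\alpha}\|(S_{A,\Psi}^*)^{-1}\|$, and since $\|S_{B,\Psi}^{-1}\|=\|(S_{B,\Psi}^*)^{-1}\|$ the optimal lower frame bound $\|S_{B,\Psi}^{-1}\|^{-1}$ is at least the claimed quantity.

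I expect the main obstacle to be the first step. The perturbation hypothesis is imposed only on the partial sums of the synthesis series, so getting from a uniform bound on those partial sums to actual \emph{convergence} (well-definedness of $\theta_B$) requires the block-tail Cauchy argument above, which must be done before Theorem \ref{cc1} is even applicable. The remaining decisive manoeuvre is the substitution $y=\theta_\Psi h$ together with absorbing the additive $\gamma\|y\|$ term into the multiplicative coefficient of $\|S_{A,\Psi}^*h\|$, since only then does the inequality take the two-constant form required by Theorem \ref{cc1}; the rest is bookkeeping with operator norms and adjoints.
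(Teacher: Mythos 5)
Your proof is correct and follows essentially the same route as the paper's: establish boundedness and convergence of $\theta_B^*$ from the partial-sum estimate, pass to the limit in (\ref{p3}), substitute a vector of the form $\theta_\Psi(\cdot)$ to convert the inequality into the two-constant form of Theorem \ref{cc1}, and read off the bounds. The only (immaterial) difference is that you apply Theorem \ref{cc1} with $U=S_{A,\Psi}^*$, $V=S_{B,\Psi}^*$ after setting $y=\theta_\Psi h$, whereas the paper sets $y=\theta_\Psi(S_{A,\Psi}^*)^{-1}h$ and applies it with $U=I_{\mathcal{H}}$, $V=S_{B,\Psi}^*(S_{A,\Psi}^*)^{-1}$; the two are related by the change of variable $h\mapsto S_{A,\Psi}^*h$ and yield identical bounds, and your explicit block-tail Cauchy argument for the convergence of $\sum_n B_n^*L_n^*y$ fills in a step the paper leaves implicit.
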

\begin{proof}
	For $m=1,2,\dots, $ and for every $ y$ in $ \ell^2(\mathbb{N})\otimes \mathcal{H}_0$, 
	\begin{align*}
	\left\| \sum\limits_{n=1}^mB_n^*L_n^*y\right\|&\leq \left\| \sum\limits_{n=1}^m(A_n^*-B_n^*)L_n^*y\right\|+\left\| \sum\limits_{n=1}^mA_n^*L_n^*y\right\|\\
	&\leq(1+\alpha)\left\| \sum\limits_{n=1}^mA_n^*L_n^*y\right\|+\beta\left\| \sum\limits_{n=1}^mB_n^*L_n^*y\right\|+\gamma\left( \sum\limits_{n=1}^m\|L_n^*y\|^2\right)^\frac{1}{2}
	\end{align*}
	which implies 
	\begin{equation}\label{p1}
	\left\| \sum\limits_{n=1}^mB_n^*L_n^*y\right\|\leq\frac{1+\alpha}{1-\beta}\left\| \sum\limits_{n=1}^mA_n^*L_n^*y\right\|+\frac{\gamma}{1-\beta}\left( \sum\limits_{n=1}^m\|L_n^*y\|^2\right)^\frac{1}{2}, \quad \forall y \in \ell^2(\mathbb{N})\otimes \mathcal{H}_0.
	\end{equation}
	Since  
	$$ \langle y,y\rangle =\langle (I_{\ell^2(\mathbb{N})}\otimes I_{\mathcal{H}_0})y,y\rangle=\left\langle\sum\limits_{n=1}^\infty L_nL_n^* y,y\right\rangle=\sum\limits_{n=1}^\infty\|L_n^* y\|^2 , \quad \forall y \in \ell^2(\mathbb{N})\otimes \mathcal{H}_0,$$
	 Inequality   (\ref{p1})  shows that $\sum_{n=1}^\infty B_n^*L_n^*y $ exists for all  $   y \in \ell^2(\mathbb{N})\otimes \mathcal{H}_0.$ 
	From the continuity of norm, Inequality (\ref{p1}) gives
	\begin{align}\label{p2}
	\left\| \sum\limits_{n=1}^\infty B_n^*L_n^*y\right\|&\leq\frac{1+\alpha}{1-\beta}\left\| \sum\limits_{n=1}^\infty A_n^*L_n^*y\right\|+\frac{\gamma}{1-\beta}\left( \sum\limits_{n=1}^\infty\|L_n^*y\|^2\right)^\frac{1}{2}\nonumber \\
	&=\frac{1+\alpha}{1-\beta}\left\| \theta_A^*y\right\|+\frac{\gamma}{1-\beta}\|y\| , \quad \forall y \in \ell^2(\mathbb{N})\otimes \mathcal{H}_0
	\end{align}
	and this gives $ \sum_{n=1}^\infty B_n^*L_n^* $   is bounded; therefore its adjoint exists, which is $ \theta_B$; Inequality (\ref{p2}) now produces $\|\theta_B^*y\|\leq \frac{1+\alpha}{1-\beta}\left\| \theta_A^*y\right\|+\frac{\gamma}{1-\beta}\|y\| , \forall y \in \ell^2(\mathbb{N})\otimes \mathcal{H}_0 $ and from this $\|\theta_B\|=\|\theta_B^*\|\leq \frac{1+\alpha}{1-\beta}\left\| \theta_A^*\right\|+\frac{\gamma}{1-\beta} =\frac{1+\alpha}{1-\beta}\left\| \theta_A\right\|+\frac{\gamma}{1-\beta}.$
 All in all,  we derived $ S_{B, \Psi}$ is a  bounded linear operator. 
	Continuity of the norm, existence of frame  operators together with Inequality (\ref{p3}) give
	$$ \|\theta_A^*y-\theta_B^*y\|\leq \alpha\|\theta_A^*y\|+\beta\|\theta_B^*y\|+\gamma\|y\|, \quad \forall y \in \ell^2(\mathbb{N})\otimes \mathcal{H}_0$$

	which implies
	$$  \|\theta_A^*(\theta_\Psi (S_{A,\Psi}^*)^{-1} h)-\theta_B^*(\theta_\Psi S_{A,\Psi}^{-1}h)\|\leq \alpha\|\theta_A^*(\theta_\Psi (S_{A,\Psi}^*)^{-1} h)\|+\beta\|\theta_B^*(\theta_\Psi S_{A,\Psi}^{-1} h)\|+\gamma\|\theta_\Psi (S_{A,\Psi}^*)^{-1} h\|, \quad \forall h \in  \mathcal{H}.$$
	But $ \theta_A^*\theta_\Psi (S_{A,\Psi}^*)^{-1}=I_\mathcal{H}$ and $\theta_B^*\theta_\Psi (S_{A,\Psi}^*)^{-1}= S_{B,\Psi} (S_{A,\Psi}^*)^{-1}.$ Therefore 
	\begin{align*}
	\| h- S_{B,\Psi}(S_{A,\Psi}^*)^{-1}h\|
	&\leq \alpha\| h\|+\beta\|S_{B,\Psi} (S_{A,\Psi}^*)^{-1} h\|+\gamma\|\theta_\Psi (S_{A,\Psi}^*)^{-1} h\|\\
	&\leq(\alpha+\gamma\|\theta_\Psi (S_{A,\Psi}^*)^{-1}\|)\|h\|+\beta\|S_{B,\Psi} (S_{A,\Psi}^*)^{-1} h\|, \quad \forall h \in  \mathcal{H}.
	\end{align*}
	Since $ \max\{\alpha+\gamma\|\theta_\Psi (S_{A,\Psi}^*)^{-1}\|, \beta\}<1$, Theorem \ref{cc1} tells that  $S_{B,\Psi} (S_{A,\Psi}^*)^{-1} $ is invertible and 
	$\|(S_{B,\Psi} (S_{A,\Psi}^*)^{-1})^{-1}\| \leq \frac{1+\beta}{1-(\alpha+\gamma\|\theta_\Psi (S_{A,\Psi}^*)^{-1}\|)}.$ From these, we get $(S_{B,\Psi} (S_{A,\Psi}^*)^{-1})S_{A,\Psi}^*=S_{B,\Psi} $ is invertible and 
	\begin{align*}
	\| S_{B,\Psi}^{-1}\|\leq\|(S_{A,\Psi}^*)^{-1}\|\| S_{A,\Psi}^*S_{B,\Psi}^{-1}\| \leq \frac{\|(S_{A,\Psi}^*)^{-1}\|(1+\beta)}{1-(\alpha+\gamma\|\theta_\Psi (S_{A,\Psi}^*)^{-1}\|)}.
	\end{align*}
	Therefore $  ( \{B_n\}_{n},  \{\Psi_n\}_{n} ) $ is a factorable weak  OVF.  Observing that 
	\begin{align*}
	\|S_{B,\Psi}\|\leq \|\theta_\Psi\|\|\theta_B\|\leq \frac{\|\theta_\Psi\|((1+\alpha)\|\theta_A\|+\gamma)}{1-\beta}
	\end{align*}
	 and  $ \|S_{B,\Psi}^{-1}\|^{-1}$ and $\|S_{B,\Psi}\| $ are optimal lower and upper frame bounds for $  ( \{B_n\}_{n},  \{\Psi_n\}_{n} ) $,  we get the frame bounds stated in the theorem.
\end{proof}
\begin{corollary}
	Let $  ( \{A_n\}_{n},  \{\Psi_n\}_{n} ) $  be  a factorable weak OVF  in $ \mathcal{B}(\mathcal{H}, \mathcal{H}_0)$. Suppose  $\{B_n\}_{n} $ in $ \mathcal{B}(\mathcal{H}, \mathcal{H}_0)$ is such that 
	$$ r \coloneqq \sum_{n=1}^\infty\|A_n-B_n\|^2 <\frac{1}{\|\theta_\Psi (S_{A,\Psi}^*)^{-1}\|^2}.$$
	Then $  ( \{B_n\}_{n},  \{\Psi_n\}_{n} ) $ is   a factorable weak OVF  with bounds $ \frac{1-\sqrt{r}\|\theta_\Psi (S_{A,\Psi}^*)^{-1}\|}{\|(S_{A,\Psi}^*)^{-1}\|}$ and ${\|\theta_\Psi\|(\|\theta_A\|+\sqrt{r})} $.
\end{corollary}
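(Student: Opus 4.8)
The plan is to deduce this corollary directly from Theorem \ref{PERTURBATION RESULT 1} by choosing the perturbation parameters $\alpha=\beta=0$ and $\gamma=\sqrt{r}$. With these values, the controlling quantity $\max\{\alpha+\gamma\|\theta_\Psi (S_{A,\Psi}^*)^{-1}\|,\beta\}$ collapses to $\sqrt{r}\,\|\theta_\Psi (S_{A,\Psi}^*)^{-1}\|$, and the hypothesis $r<\|\theta_\Psi (S_{A,\Psi}^*)^{-1}\|^{-2}$ is exactly what guarantees this is strictly less than $1$. So the only real task is to verify that hypothesis (\ref{p3}) of Theorem \ref{PERTURBATION RESULT 1} holds for this choice, after which the conclusion and the two frame bounds follow by substitution.

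First I would establish the key inequality. Fix $m$ and $y\in\ell^2(\mathbb{N})\otimes\mathcal{H}_0$. By the triangle inequality followed by Cauchy--Schwarz,
\begin{align*}
\left\|\sum_{n=1}^m(A_n^*-B_n^*)L_n^*y\right\|
&\leq\sum_{n=1}^m\|(A_n^*-B_n^*)L_n^*y\|
\leq\sum_{n=1}^m\|A_n-B_n\|\,\|L_n^*y\|\\
&\leq\left(\sum_{n=1}^m\|A_n-B_n\|^2\right)^{\frac12}\left(\sum_{n=1}^m\|L_n^*y\|^2\right)^{\frac12}
\leq\sqrt{r}\left(\sum_{n=1}^m\|L_n^*y\|^2\right)^{\frac12}.
\end{align*}
This is precisely Inequality (\ref{p3}) with $\alpha=\beta=0$ and $\gamma=\sqrt{r}$, where I used $\|A_n^*-B_n^*\|=\|A_n-B_n\|$ and the fact that the tail sum $\sum_{n=1}^m\|A_n-B_n\|^2$ is dominated by $r$.

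Having verified (\ref{p3}), I would invoke Theorem \ref{PERTURBATION RESULT 1} to conclude that $(\{B_n\}_n,\{\Psi_n\}_n)$ is a factorable weak OVF. The lower bound $\frac{1-(\alpha+\gamma\|\theta_\Psi (S_{A,\Psi}^*)^{-1}\|)}{(1+\beta)\|(S_{A,\Psi}^*)^{-1}\|}$ specializes, upon setting $\alpha=\beta=0$ and $\gamma=\sqrt{r}$, to $\frac{1-\sqrt{r}\,\|\theta_\Psi (S_{A,\Psi}^*)^{-1}\|}{\|(S_{A,\Psi}^*)^{-1}\|}$, and the upper bound $\frac{\|\theta_\Psi\|((1+\alpha)\|\theta_A\|+\gamma)}{1-\beta}$ specializes to $\|\theta_\Psi\|(\|\theta_A\|+\sqrt{r})$, matching the stated bounds exactly. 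Since the argument is a clean specialization of the preceding theorem, there is no genuine obstacle here; the only point requiring a moment's care is the Cauchy--Schwarz step and the observation that the square-summability of $\{\|A_n-B_n\|\}_n$ (encoded in the finiteness of $r$) is what licenses the pointwise bound uniformly in $m$.
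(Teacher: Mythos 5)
Your proposal is correct and is essentially identical to the paper's own proof: both specialize Theorem \ref{PERTURBATION RESULT 1} with $\alpha=\beta=0$, $\gamma=\sqrt{r}$, verify Inequality (\ref{p3}) via the triangle inequality and Cauchy--Schwarz, and read off the bounds. Your explicit inclusion of the intermediate triangle-inequality step before Cauchy--Schwarz is, if anything, slightly more careful than the paper's one-line estimate.
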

\begin{proof}
We try to apply Theorem \ref{PERTURBATION RESULT 1}.	Take $ \alpha =0, \beta=0, \gamma=\sqrt{r}$. Then $ \max\{\alpha+\gamma\|\theta_\Psi (S_{A,\Psi}^*)^{-1}\|, \beta\}<1$ and  for all $m=1,2, \dots, $
	$$ \left\|\sum\limits_{n=1}^m(A_n^*-B_n^*)L_n^*y\right\|\leq \left(\sum\limits_{n=1}^m\|A_n^*-B_n^*\|^2 \right)^\frac{1}{2}\left(\sum\limits_{n=1}^m\|L_n^*y\|^2\right)^\frac{1}{2}\leq \gamma\left(\sum\limits_{n=1}^m\|L_n^*y\|^2\right)^\frac{1}{2}, ~\forall y \in \ell^2(\mathbb{N})\otimes \mathcal{H}_0.$$
\end{proof}
\begin{theorem}\label{OVFQUADRATICPERTURBATION}
	Let $  ( \{A_n\}_{n},  \{\Psi_n\}_{n} ) $  be a factorable weak OVF  in $ \mathcal{B}(\mathcal{H}, \mathcal{H}_0)$. Suppose  $\{B_n\}_{n} $ in $ \mathcal{B}(\mathcal{H}, \mathcal{H}_0)$ is such that  $   \sum_{n=1}^\infty\|A_n-B_n\|^2$ converges, and 
	$\sum_{n=1}^\infty\|A_n-B_n\|\|\Psi_n(S_{A,\Psi}^*)^{-1}\|<1.$
	Then  $  ( \{B_n\}_{n},  \{\Psi_n\}_{n} ) $ is a factorable weak OVF  with bounds 
	\begin{align*}
	\frac{1-\sum_{n=1}^\infty\|A_n-B_n\|\|\Psi_n(S_{A,\Psi}^*)^{-1}\|}{\|(S_{A,\Psi}^*)^{-1}\|}\quad \text{ and } \quad \|\theta_\Psi\|\left(\left(\sum_{n=1}^\infty\|A_n-B_n\|^2\right)^{1/2}+\|\theta_A\|\right) .
	\end{align*}  
\end{theorem}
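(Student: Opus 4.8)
The plan is to follow the template of Theorem \ref{PERTURBATION RESULT 1} almost verbatim, the one genuinely new ingredient being that the Cauchy--Schwarz step used in its Corollary is replaced by a termwise estimate that preserves the individual factors $\|\Psi_n(S_{A,\Psi}^*)^{-1}\|$; this is exactly what sharpens the hypothesis. First I would check factorability by controlling the perturbed analysis operator. Setting $\theta_{A-B} \coloneqq \sum_n L_n(A_n - B_n)$, Equation (\ref{LEQUATION}) gives $\|\theta_{A-B}h\|^2 = \sum_n \|(A_n - B_n)h\|^2 \le \big(\sum_n \|A_n - B_n\|^2\big)\|h\|^2$ for every $h$, so $\theta_{A-B}$ is bounded with $\|\theta_{A-B}\| \le \big(\sum_n \|A_n - B_n\|^2\big)^{1/2}$, whence $\theta_B = \theta_A - \theta_{A-B}$ is a well-defined bounded operator and $\|\theta_B\| \le \|\theta_A\| + \big(\sum_n \|A_n - B_n\|^2\big)^{1/2}$. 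Since $\theta_\Psi$ is already bounded, the factorization $S_{B,\Psi} = \theta_\Psi^*\theta_B$ gives the stated upper bound immediately via $\|S_{B,\Psi}\| \le \|\theta_\Psi\|\|\theta_B\|$.

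The core of the argument is invertibility of $S_{B,\Psi}$. As in Theorem \ref{PERTURBATION RESULT 1}, the identities $\theta_A^*\theta_\Psi(S_{A,\Psi}^*)^{-1} = I_\mathcal{H}$ and $\theta_B^*\theta_\Psi(S_{A,\Psi}^*)^{-1} = S_{B,\Psi}(S_{A,\Psi}^*)^{-1}$ yield
$$ I_\mathcal{H} - S_{B,\Psi}(S_{A,\Psi}^*)^{-1} = (\theta_A^* - \theta_B^*)\,\theta_\Psi(S_{A,\Psi}^*)^{-1}. $$
Fixing $h \in \mathcal{H}$ and putting $z \coloneqq \theta_\Psi(S_{A,\Psi}^*)^{-1}h$, the key observation is that $L_n^* z = \Psi_n(S_{A,\Psi}^*)^{-1}h$, so rather than collapsing $\sum_n \|L_n^* z\|^2 = \|z\|^2$ (the move that produces the Corollary's coarser constant $\sqrt{r}\,\|\theta_\Psi(S_{A,\Psi}^*)^{-1}\|$) I would estimate termwise:
$$ \|(\theta_A^* - \theta_B^*)z\| \le \sum_{n=1}^\infty \|A_n - B_n\|\,\|L_n^* z\| \le \Big(\sum_{n=1}^\infty \|A_n - B_n\|\,\|\Psi_n(S_{A,\Psi}^*)^{-1}\|\Big)\|h\| \eqqcolon C\|h\|, $$
where $C<1$ by hypothesis and the same bound secures absolute convergence of $\sum_n (A_n^* - B_n^*)L_n^* z$.

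Finally I would invoke Theorem \ref{cc1} with $U = I_\mathcal{H}$, $V = S_{B,\Psi}(S_{A,\Psi}^*)^{-1}$, $\alpha = C$, $\beta = 0$, to conclude that $V$ is bounded invertible with $\|V^{-1}\| \le (1-C)^{-1}$. Then $S_{B,\Psi} = V\,S_{A,\Psi}^*$ is invertible, so $(\{B_n\}_n, \{\Psi_n\}_n)$ is a factorable weak OVF, and $\|S_{B,\Psi}^{-1}\| \le \|(S_{A,\Psi}^*)^{-1}\|\,(1-C)^{-1}$ gives the optimal lower frame bound $\|S_{B,\Psi}^{-1}\|^{-1} \ge (1-C)/\|(S_{A,\Psi}^*)^{-1}\|$, matching the statement. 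The only delicate point is the displayed termwise inequality: one must recognize $L_n^* z = \Psi_n(S_{A,\Psi}^*)^{-1}h$ and deliberately avoid the premature Cauchy--Schwarz, which is precisely what replaces $\sqrt{r}\,\|\theta_\Psi(S_{A,\Psi}^*)^{-1}\| < 1$ by the weaker $\sum_n \|A_n - B_n\|\,\|\Psi_n(S_{A,\Psi}^*)^{-1}\| < 1$; the remaining steps are bookkeeping parallel to Theorem \ref{PERTURBATION RESULT 1}.
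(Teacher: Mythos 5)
Your proposal is correct and takes essentially the same approach as the paper: boundedness of $\theta_B$ from the square-summability of $\|A_n-B_n\|$ (yielding the same upper bound), the termwise estimate $\|I_\mathcal{H}-S_{B,\Psi}(S_{A,\Psi}^*)^{-1}\|\leq\sum_{n=1}^{\infty}\|A_n-B_n\|\,\|\Psi_n(S_{A,\Psi}^*)^{-1}\|<1$, and a small-perturbation inversion (your use of Theorem \ref{cc1} with $U=I_\mathcal{H}$ and $\beta=0$ is just the Neumann series the paper invokes). The only remark worth making is that $\theta_B^*\theta_\Psi(S_{A,\Psi}^*)^{-1}$ equals $S_{B,\Psi}^{*}(S_{A,\Psi}^*)^{-1}$ rather than $S_{B,\Psi}(S_{A,\Psi}^*)^{-1}$ --- a slip the paper's own proofs share --- which is harmless since $\|T^{-1}\|=\|(T^*)^{-1}\|$.
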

\begin{proof}
	Let $ \alpha =\sum_{n=1}^\infty\|A_n-B_n\|^2 $ and $\beta =\sum_{n=1}^\infty\|A_n-B_n\|\|\Psi_n(S_{A,\Psi}^*)^{-1}\|$. For  $m=1,2,\dots $ and for every $ y$ in $ \ell^2(\mathbb{N})\otimes \mathcal{H}_0$, 
	
	\begin{align*}
	\left\| \sum\limits_{n=1}^mB_n^*L_n^*y\right\|&\leq \left\| \sum\limits_{n=1}^m(A_n^*-B_n^*)L_n^*y\right\|+\left\| \sum\limits_{n=1}^mA_n^*L_n^*y\right\|\leq \sum\limits_{n=1}^m\|A_n-B_n\|\|L_n^*y\|+\left\| \sum\limits_{n=1}^mA_n^*L_n^*y\right\| \\
	&\leq \left( \sum\limits_{n=1}^m\|A_n-B_n\|^2\right)^\frac{1}{2}\left( \sum\limits_{n=1}^m\|L_n^*y\|^2\right)^\frac{1}{2}+\left\| \sum\limits_{n=1}^mA_n^*L_n^*y\right\|\\
	&\leq \alpha^\frac{1}{2} \left( \sum\limits_{n=1}^m\|L_n^*y\|^2\right)^\frac{1}{2}+\left\| \sum\limits_{n=1}^mA_n^*L_n^*y\right\|=\alpha^\frac{1}{2} \left\langle  \sum\limits_{n=1}^mL_nL_n^*y, y\right\rangle ^\frac{1}{2}+\left\| \sum\limits_{n=1}^mA_n^*L_n^*y\right\|,
	\end{align*}
	which converges to $\sqrt{\alpha}\|y\|+\|\theta_A^*y\|$. Hence 
	$\theta_B$ exists and $\|\theta_B\|\leq \sqrt{\alpha}+\|\theta_A\|$. Therefore  $S_{B,\Psi}=\theta_\Psi^*\theta_B=\sum_{n=1}^\infty\Psi^*_nB_n$ exists.
	Now 
	\begin{align*}
	\|I_\mathcal{H}-S_{B,\Psi}(S_{A,\Psi}^*)^{-1}\|&=\left\|\sum_{n=1}^\infty A_n^*\Psi_n (S_{A,\Psi}^*)^{-1}-\sum_{n=1}^\infty B_n^*\Psi_n (S_{A,\Psi}^*)^{-1}\right\|=\left\|\sum_{n=1}^\infty(A_n^*-B_n^*)\Psi_n (S_{A,\Psi}^*)^{-1}\right\|\\
	&\leq \sum_{n=1}^\infty\|A_n-B_n\|\|\Psi_n (S_{A,\Psi}^*)^{-1}\| =\beta<1.
	\end{align*}
	Therefore $S_{B,\Psi}(S_{A,\Psi}^*)^{-1}$ is invertible and $ \|(S_{B,\Psi}(S_{A,\Psi}^*)^{-1})^{-1}\|\leq 1/(1-\beta)$. Conclusion of frame bounds is similar to proof of Theorem \ref{PERTURBATION RESULT 1}.
\end{proof}

  \section{Acknowledgements}
  We thank  Prof. Victor Kaftal, University of Cincinnati, Ohio for   giving reasons of some of the arguments in the  paper ``Operator-valued frames"  \cite{KAFTAL} coauthored by him. The first author thanks the National Institute of Technology Karnataka (NITK), Surathkal for giving financial support and the present work of the second author was partially supported by Science and Engineering Research Council (SERC), DST, Government of India, through the Fast Track  Scheme for Young Scientists (D.O. No. SR/FTP/MS-050/2011).

 \bibliographystyle{plain}
 \bibliography{reference.bib}

\end{document}